\theoremstyle{plain}
\newtheorem{thm}{Theorem}
\newtheorem{lem}{Lemma}
\newtheorem{defn}{Definition}
\newtheorem{cor}{Corollary}
\newtheorem{clm}{Claim}
\theoremstyle{definition}
\newtheorem*{rem}{Remark}
\newenvironment{pf}
{\begin{proof}} {\end{proof}}
\newcommand{\disp}{\displaystyle}
\DeclareMathOperator{\supp}{supp}
\DeclareMathOperator{\di}{div}
\DeclareMathOperator{\tr}{tr}
\DeclareMathOperator{\loc}{loc}
\newcommand{\eps}{\varepsilon}
\newcommand{\vp}{\varphi}
\newcommand{\al}{\alpha}
\newcommand{\be}{\beta}
\newcommand{\ga}{\gamma}
\newcommand{\de}{\delta}
\newcommand{\Ga}{\Gamma}
\newcommand{\te}{\theta}
\newcommand{\la}{\lambda}
\newcommand{\La}{\Lambda}
\newcommand{\om}{\omega}
\newcommand{\Om}{\Omega}
\newcommand{\si}{\sigma}
\newcommand{\Si}{\Sigma}
\newcommand{\ol}{\overline}
\newcommand{\nid}{\noindent}
\newcommand{\iny}{\infty}
\newcommand{\del}{ \partial}
\newcommand{\su}{\subset}
\newcommand{\LP}{\Delta}
\newcommand{\gr}{\nabla}
\newcommand{\norm}[1]{\left\vert \left\vert #1\right\vert\right\vert}
\newcommand{\abs}[1]{\left\vert#1\right\vert}
\newcommand{\set}[1]{\left\{#1\right\}}
\newcommand{\brac}[1]{\left[#1\right]}
\newcommand{\pr}[1]{\left( #1 \right) }
\newcommand{\pb}[1]{\left( #1 \right] }
\newcommand{\brp}[1]{\left[#1\right)}
\newcommand{\N}{\ensuremath{\mathbb{N}}}
\newcommand{\R}{\ensuremath{\mathbb{R}}}
\newcommand{\Keywords}[1]{\par\noindent 
{\small{\bf Keywords\/}: #1}}
\newcommand{\MSC}[1]{\par\noindent 
{\small{\bf Mathematics Subject Classification\/}: #1}}
\date{}
\begin{document}

\title{Landis' conjecture for general second order elliptic equations \\
with singular lower order terms in the plane}

\author[Davey]{Blair Davey}
\address{Department of Mathematics, City College of New York CUNY, New York, NY 10031, USA}
\email{bdavey@ccny.cuny.edu}
\thanks{Davey is supported in part by the Simons Foundation Grant number 430198.}
\author[Wang]{Jenn-Nan Wang}
\address{Institute of Applied Mathematical Sciences, NCTS, National Taiwan University, \\
Taipei 106, Taiwan}
\email{jnwang@math.ntu.edu.tw}
\thanks{Wang is supported in part by MOST 102-2115-M-002-009-MY3.}

\begin{abstract}
In this article, we study the order of vanishing and a quantitative form of Landis' conjecture in the plane for solutions to second-order elliptic equations with variable coefficients and singular lower order terms. 
Precisely, we let $A$ be real-valued, bounded and elliptic, but not necessary symmetric or continuous, and we assume that $V$ and $W_i$ are real-valued and belong to $L^p$ and $L^{q_i}$, respectively. 
We prove that if $u$ is a real-valued, bounded and normalized solution to an equation of the form $-\di \pr{A \gr u + W_1 u} + W_2 \cdot \gr u + V u = 0$ in $B_d$, then under suitable conditions on the lower order terms, for any $r$ sufficiently small, the following order of vanishing estimate holds
$$\norm{u}_{L^\iny\pr{B_r}} \ge r^{C M},$$
where $M$ depends on the Lebesgue norms of the lower order terms.
In a number of settings, a scaling argument gives rise to a quantitative form of Landis' conjecture,
\[
\inf_{\abs{z_0} = R} \norm{u}_{L^\iny\pr{B_1\pr{z_0}}} \ge \exp\pr{- C R^\be \log R},
\]
where $\be$ depends on $p$, $q_1$, and $q_2$. 
The integrability assumptions that we impose on $V$ and $W_i$ are nearly optimal in view of a scaling argument. 
We use the theory of elliptic boundary value problems to establish the existence of positive multipliers associated to the elliptic equation.
Then the proofs rely on transforming the equations to Beltrami systems and applying a generalization of Hadamard's three-circle theorem. \\

\Keywords{Landis' conjecture; quantitative unique continuation; order of vanishing; Beltrami system} \\

\MSC{35B60, 35J10} 
\end{abstract}

\maketitle

\section{Introduction}

In this paper, we study quantitative versions of Landis' conjecture for real-valued solutions to second-order, uniformly elliptic equations with singular lower order terms.
Over an open, connected $\Om \su \R^2$, define the second-order divergence-form operator
$$L :=  -\di\pr{A \gr },$$
where we assume that $A = \pr{a_{ij}}_{i, j = 1}^2$ is real-valued, measurable, and is not necessarily symmetric.
We also assume that $A$ is uniformly elliptic and bounded, i.e., there exist $\la \in \pb{0, 1}$, $\La > 0$ so that for every $z \in \Om$,
\begin{align}
&a_{ij}\pr{z} \xi^i \xi^j \ge \la  \abs{\xi}^2 \; \text{ for all } \xi \in \R^2,\label{ellip} \\
&\abs{a_{ij}\pr{z}} \le \La.  \label{ABd}
\end{align}
For real-valued $W_1, W_2, V$ belonging to appropriate Lebesgue spaces, 
we study the unique continuation properties of real-valued solutions to the following second-order elliptic equation in the plane:
\begin{equation}
- \di\pr{A \gr u + W_1 u} + W_2 \cdot \gr u + V u = 0.
\label{epde}
\end{equation}

We use the notation $B_r\pr{z_0}$ to denote the ball of radius $r$ centered at $z_0 \in \R^2$.
Often, we abbreviate this notation and simply write $B_r$ when the centre is understood from the context.
For the order of vanishing estimates, we consider solutions to \eqref{epde} in $B_d$, where $d$ is a constant to be specified below (see \eqref{drho}).
The constants $b, \tilde b$ will also be specified later on (see \eqref{bsigma} and \eqref{btsigma}), once we have introduced quasi-balls.
Quasi-balls are sets associated with the levels sets of fundamental solutions, and are therefore appropriate generalizations of standard balls to the variable coefficient setting.

Our first collection of theorems describe the order of vanishing for solutions to equations of the form \eqref{epde}.
First we consider very general equations with smallness and non-negativity conditions on the singular lower order terms.

\begin{thm}
\label{OofV}
Assume that  conditions \eqref{ellip} and \eqref{ABd} are satisfied, and that for some $q_1, q_2 \in (2, \iny]$, $p \in (1, \iny]$, $\norm{W_1}_{L^{q_1}\pr{B_d}} \le K$, $\norm{W_2}_{L^{q_2}\pr{B_d}} \le \min\set{ \frac \la {2 c_{q_2}}, \frac 1 {3 C_{q_2}}}$, and $\norm{V}_{L^p\pr{B_d}}\le \frac 1 {3 C_p}$, where $K \ge 1$ and $c_{q_2}, C_{q_2}, C_p \ge 1$ are specific constants.
Assume further that
\begin{align}
&\int W_1 \cdot \gr \phi \ge 0 \quad \text{for every} \; \phi \in W^{1,q_1'}\pr{\Om} \; \text{ such that} \; \phi \ge 0,
\label{pos1} \\
&\int W_2 \cdot \gr \phi + V \phi \ge 0 \quad \text{for every} \; \phi \in W^{1,q_2'}\pr{\Om} \cap L^{p'}\pr{\Om} \; \text{ such that} \; \phi \ge 0,
\label{pos2}
\end{align}
where $q_1',q_2',p'$ denote the conjugate exponent of $q_1,q_2,p$, respectively. Let $u$ be a real-valued solution to \eqref{epde} in $B_d$ that satisfies 
\begin{align}
& \norm{u}_{L^\iny\pr{B_d}} \le \exp\pr{C_0 K}
\label{localBd} \\
& \norm{u}_{L^\iny\pr{B_b}} \ge 1. 
\label{localNorm}
\end{align}
Then for any $r$ sufficiently small and any $\eps > 0$, 
\begin{equation}
\norm{u}_{L^\iny\pr{B_r}} \ge r^{C K^{1+\eps}},
\label{localEst}
\end{equation}
where $C$ depends on $\la$, $\La$, $q_1$, $q_2$, $p$, $C_0$, and $\eps$.
\end{thm}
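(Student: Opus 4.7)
The plan is to carry out the three-step program outlined in the abstract: (i) use the non-negativity hypotheses \eqref{pos1} and \eqref{pos2} together with the theory of elliptic boundary value problems to produce positive multipliers associated to the operator in \eqref{epde}; (ii) use these multipliers to transform \eqref{epde} into a pure divergence-form equation $-\di(\tilde A \gr v) = 0$ with no lower order terms, and then convert this equation to a Beltrami system via a stream-function argument; and (iii) apply a generalized Hadamard three-circle theorem for the resulting quasi-regular mapping and iterate to produce the estimate \eqref{localEst}.

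For step (i), I would solve a Dirichlet problem on $B_d$ with boundary datum $1$ to obtain $\phi_1 > 0$ satisfying $-\di(A \gr \phi_1 + W_1 \phi_1) = 0$, relying on the distributional positivity $-\di W_1 \ge 0$ (equivalent to \eqref{pos1}) together with Lax--Milgram and the Stampacchia maximum principle for existence and positivity. A De Giorgi/Moser iteration adapted to $W_1 \in L^{q_1}$ with $q_1 > 2$ yields $\phi_1 \le e^{CK}$, while a Trudinger--Serrin Harnack inequality gives $\phi_1 \ge e^{-CK}$ on a slightly smaller quasi-ball. A second multiplier $\phi_2 > 0$ handling $W_2,V$ is built analogously from \eqref{pos2}; because $\norm{W_2}_{L^{q_2}}$ and $\norm{V}_{L^p}$ are small, a contraction/Fredholm argument produces $\phi_2$ whose sup-to-inf ratio is bounded by a constant independent of $K$.

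For step (ii), setting $v = u/(\phi_1 \phi_2)$ (or the analogous combination dictated by whether each $\phi_i$ satisfies the original equation or its adjoint), a direct calculation using the equations satisfied by $u, \phi_1, \phi_2$ shows that $v$ satisfies $-\di(\tilde A \gr v) = 0$ with $\tilde A = \phi_1 \phi_2 A$, still uniformly elliptic with ellipticity constants depending on $\la, \La, K$. In two dimensions, such equations admit a stream function: there exists an $\tilde A$-harmonic conjugate $\tilde v$ so that $f = v + i \tilde v$ solves a Beltrami system $\bar\partial f = \mu\, \partial f + \nu\, \overline{\partial f}$ with $\norm{\mu}_\iny + \norm{\nu}_\iny \le k < 1$ depending only on $\la, \La$. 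For such quasi-regular $f$, the three-circle inequality reads
\[
\max_{B_r}\abs{f} \le \pr{\max_{B_{r_1}}\abs{f}}^{1-\te}\pr{\max_{B_{r_2}}\abs{f}}^{\te},
\]
with exponent $\te$ determined by $k$ and the ratio of radii. Substituting the upper bound $\norm{v}_{L^\iny\pr{B_d}} \lesssim e^{CK}$ from \eqref{localBd} and the lower bound $\norm{v}_{L^\iny\pr{B_b}} \gtrsim e^{-CK}$ from \eqref{localNorm}, then iterating as $r \to 0$, yields \eqref{localEst} after undoing the multiplier substitution.

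The main obstacle is tracking the $K$-dependence sharply through these reductions. Each multiplier bound and each application of the three-circle inequality contributes a factor $e^{CK}$, and a naive accounting produces a $K^2$ order of vanishing rather than the advertised $K^{1+\eps}$. Recovering the correct exponent requires an optimal form of the Harnack/Moser estimate for $\phi_1$ with sharp dependence on $\norm{W_1}_{L^{q_1}}$, together with a careful optimization of the intermediate radii in the iterated three-circle argument so that the $\eps$ loss is absorbed into the multiplicative constant $C$. A secondary technical complication is that the natural geometry is given by quasi-balls (level sets of the fundamental solution of $L$) rather than Euclidean balls, so the estimates must be transferred between the two geometries in order to state the final conclusion in the form \eqref{localEst}.
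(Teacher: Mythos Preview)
Your proposal has a genuine structural gap in step (ii). Dividing $u$ by a positive multiplier (or a product of two) does \emph{not} produce an equation of the form $-\di(\tilde A\gr v)=0$ with no lower order terms. In the paper's argument a single multiplier $\phi$ is built, not two, and it is taken to solve the \emph{adjoint} equation $-\di(A^T\gr\phi+W_2\phi)+W_1\cdot\gr\phi+V\phi=0$; the smallness of $W_2,V$ then forces $\tfrac13\le\phi\le 1$, so the pointwise bounds on $\phi$ carry no $K$ at all. The purpose of $\phi$ is not to strip the lower order terms via $u\mapsto u/\phi$, but rather to make the vector field $\phi(A\gr u+bu)$ divergence free, with $b=-A^T\gr(\log\phi)+W_1-W_2$. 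The stream function $v$ of this field and $w=\phi u+iv$ then satisfy an \emph{inhomogeneous} Beltrami equation
\[
Dw=(\alpha+\beta_1-\beta_2)(w+\bar w),
\]
whose zero-order coefficient involves $\gr\log\phi$ and $W_1,W_2$. One then invokes the similarity principle $w=fg$ with $D_wf=0$ and $|g|\in[e^{-C\|\alpha\|_{L^t}-CK},\,e^{C\|\alpha\|_{L^t}+CK}]$ for some $t>2$, and applies the three-quasi-circle theorem to $f$.

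Consequently the source of the exponent $K^{1+\eps}$ is not where you place it. It does not come from Harnack/Moser bounds on a multiplier (those bounds are $O(1)$ here), nor from an ``optimization of intermediate radii'' in the three-circle iteration. It comes from controlling $\|\gr\log\phi\|_{L^t(B_{\rho(7/5)})}$ for some $t>2$: an $L^2$ bound $\|\gr\log\phi\|_{L^2}\le CK$ is easy, but the similarity principle requires $t>2$. The paper obtains $\|\gr\log\phi\|_{L^{t_0}}\le CK^{\,c(t_0)}$ for some $t_0>2$ via a rescaling of the PDE for $\Phi=\log\phi$ combined with a Gehring/reverse-H\"older argument, and then interpolates between $t=2$ and $t=t_0$ to get $\|\gr\log\phi\|_{L^t}\le CK^{1+\eps}$ for $t$ slightly above $2$. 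Without this mechanism your outline cannot reach $K^{1+\eps}$; a pure-multiplier route of the kind you sketch, even if it could be made to close, would yield the exponent $K$ (not $K^2$), and would not explain the $\eps$-loss in the statement.
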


In the case where $W_2, V \equiv 0$, the previous result holds with $\eps = 0$ in the absence of condition \eqref{pos1}.

\begin{thm}
\label{OofV1}
Assume that  conditions \eqref{ellip} and \eqref{ABd} are satisfied, and that for some $q \in [2, \iny]$, $\norm{W}_{L^{q}\pr{B_d}} \le K$.
Let $u$ be a real-valued solution to 
\begin{equation}
- \di\pr{A \gr u + W u}  = 0
\label{epde2}
\end{equation}
in $B_d$ that satisfies \eqref{localBd}. 
\begin{enumerate}
\item[{\rm (a)}] If $q > 2$ and $u$ satisfies \eqref{localNorm}, then for any $r$ sufficiently small, \eqref{localEst} holds with $\eps = 0$ and $C$ depending on $\la$, $\La$, $q$, and $C_0$.
\item[{\rm (b)}] If $q = 2$, then the strong unique continuation property holds.
\end{enumerate}
\end{thm}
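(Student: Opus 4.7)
The plan is to convert \eqref{epde2} into a generalized Beltrami system for a complex-valued function, reduce to a holomorphic object via the similarity principle of Bers--Vekua, and then invoke Hadamard's classical three-circle theorem. This is the strategy flagged in the abstract, specialised to the case in which the sole lower-order term $Wu$ sits inside the divergence.

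\emph{Step 1: stream function and Beltrami reduction.} Since \eqref{epde2} says that $A\gr u + Wu$ is divergence-free on the simply connected disk $B_d$, there exists $\tilde u \in W^{1,2}_{\loc}(B_d)$, unique up to an additive constant, with $\gr \tilde u = J(A\gr u + Wu)$, where $J$ denotes rotation by $\pi/2$. Set $f = u + i\tilde u$. Expressing $\del_{\bar z} f$ and $\del_z f$ in terms of $\gr u$ and $\gr \tilde u$ and inserting the identity above gives, after a routine algebraic manipulation, an equation of the form
\[
\del_{\bar z} f = \mu \del_z f + \nu \overline{\del_z f} + \al f + \be \bar f,
\]
where $\mu, \nu \in L^\iny(B_d)$ are built from $A$ with $\norm{\mu}_{L^\iny} + \norm{\nu}_{L^\iny} \le k < 1$ for some $k = k(\la, \La)$ (using \eqref{ellip}--\eqref{ABd}), and $\al, \be$ are complex-valued with $\abs{\al} + \abs{\be} \le C(\la,\La)\abs{W}$. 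In particular $\norm{\al}_{L^q(B_d)} + \norm{\be}_{L^q(B_d)} \lesssim K$.

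\emph{Step 2: similarity principle.} In part (a), where $q > 2$, the classical similarity principle yields a factorisation $f = e^{s}(F \circ \chi)$, where $\chi$ is a $K'$-quasiconformal self-homeomorphism of $B_d$ associated with the principal part (existing by the measurable Riemann mapping theorem with $K' = (1+k)/(1-k)$), $F$ is holomorphic on $\chi(B_d)$, and $s$ is a bounded function obtained as a Cauchy-type integral of $\al + \be\bar f / f$ satisfying $\norm{s}_{L^\iny} \le C(q, \la, \La) K$. In part (b), with $q = 2$, the Cauchy transform of an $L^2$ density is no longer $L^\iny$, so the analogous factorisation only yields $s \in \bigcap_{p<\iny} L^p_{\loc}$; the crucial point, however, is that $e^{s}$ remains non-vanishing, so the zero set of $f$ coincides with that of $F \circ \chi$ together with orders of vanishing.

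\emph{Step 3: three-circle argument.} Using Mori's Hölder distortion estimate, the hypotheses \eqref{localBd}--\eqref{localNorm} transfer to comparable upper and lower bounds for $F$ on a chain of concentric disks centred at $\chi(0)$, with the multiplicative factor from $e^{s}$ contributing at most $e^{CK}$ in case (a). Applying the classical Hadamard three-circle theorem to the holomorphic function $F$ on this chain of disks and pulling back through $\chi$ yields the asserted estimate \eqref{localEst} with $\eps = 0$ and $C = C(\la, \La, q, C_0)$, after recalling $u = \mathrm{Re}\, f \le \abs{f}$. For part (b), the non-vanishing of $e^s$ implies that if $u$ vanished to infinite order at a point $z_0 \in B_d$ then so would $F$, forcing $F \equiv 0$ by analyticity, hence $f \equiv 0$ and $u \equiv 0$; this is the strong unique continuation property.

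\emph{Main obstacle.} The borderline case $q = 2$ is the delicate step: the pointwise $L^\iny$ control of $s$ is lost, so one must establish the Bers--Vekua factorisation only in an $L^p_{\loc}$ sense and carefully argue, via the $L^2$ theory of the Beurling transform and the local integrability of $e^{\pm s}$, that orders of vanishing of $f$ and of the holomorphic factor $F \circ \chi$ agree. This is precisely why the quantitative rate of part (a) degenerates, in the borderline regime, to the qualitative statement of strong unique continuation in part (b).
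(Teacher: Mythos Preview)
Your overall strategy for part (a) is a legitimate alternative to the paper's, but it differs in an important way and has a real omission. The paper does \emph{not} perform a Stoilow factorisation through a quasiconformal map $\chi$; instead, after the similarity principle yields $w = f g$ with $D_w f = 0$ and $|g|^{\pm 1}\le e^{CK}$, it applies a \emph{three-quasi-circle} theorem (Corollary \ref{3circle}) directly to $f$, where the ``circles'' are level sets $Z_s$ of the fundamental solution of the associated second-order operator $\hat L$. Your route via $F\circ\chi$ and classical Hadamard is viable, but you then have to push all estimates through Mori's distortion bounds, whereas the paper's quasi-ball machinery absorbs that distortion intrinsically. More seriously, you never explain how to pass between bounds on $u$ and bounds on $f = u + i\tilde u$: the inequality $|u|\le |f|$ only goes one way, and both the upper bound on $f$ over the large ball and the transfer of the small-ball lower bound on $f$ back to $u$ require controlling the stream function $\tilde u$ by $u$. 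The paper does this via Lemma \ref{tildevBd} (an $L^1$ estimate $\|w\|_{L^1(B_r)}\lesssim r^2\|u\|_{L^\infty(B_{\kappa r})}$, combined with interior estimates to move between $L^1$ and $L^\infty$); without an analogue of this step your argument for (a) is incomplete.

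For part (b) your proposal has a genuine gap. You correctly identify that when $q=2$ the multiplier $e^s$ is no longer bounded, and you assert that ``non-vanishing of $e^s$'' plus ``local integrability of $e^{\pm s}$'' suffice to transfer infinite-order vanishing from $f$ to $F\circ\chi$. But this is precisely the content that needs proof, and mere $L^p_{\loc}$ integrability of $e^{\pm s}$ is not by itself enough: one needs quantitative control of $\fint_{B_r}\exp(c|s|)$ as $r\to 0$ that does not swamp the polynomial decay of $f$. The paper handles this via the Moser--Trudinger-type estimate of Lemma \ref{lemma0701} (from \cite{KW15}), namely $\fint_{B_r}\exp(s|h|)\le C r^{-sCK}\exp(sCK+s^2CK^2)$, and then runs a \emph{quantitative} contradiction argument through the three-quasi-ball inequality rather than your qualitative ``$F$ holomorphic and vanishing to infinite order implies $F\equiv 0$''. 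You also skip the preliminary step of showing that infinite-order vanishing of $u$ forces infinite-order vanishing of the stream function $\tilde u$ (hence of $f$); this uses Caccioppoli (Lemma \ref{Cacc+}) together with the integral representation of $\tilde u$, as in \eqref{L2vEst0}--\eqref{L2vEst}. Your outline could be completed along these lines, but as written the $q=2$ case is a sketch of where the difficulty lies rather than a proof.
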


When $W_1, V \equiv 0$ and we have additional assumptions on the coefficient matrix, we get another order of vanishing estimate.
Again, we do not require any smallness or non-negativity on the lower order terms.

\begin{thm}
\label{OofV2}
Let $A$ be symmetric and uniformly elliptic with Lipschitz continuous coefficients. 
That is, \eqref{ellip} and \eqref{ABd} hold, $a_{12} = a_{21}$, and as a consequence of Rademacher's theorem, there exists $\mu > 0$ such that
\begin{align}
\norm{\gr a_{ij}}_{L^\iny} \le \mu \quad \text{ for each } \;\; i, j = 1, 2.
\label{gradDec}
\end{align}
Assume that for some $q \in [2, \iny]$, $\norm{W}_{L^{q}\pr{B_d}} \le K$.
Let $u$ be a real-valued solution to 
\begin{equation}
- \di\pr{A \gr u} + W \cdot \gr u = 0.
\label{epde3}
\end{equation}
in $B_d$.
\begin{enumerate}
\item[{\rm (a)}] If $q > 2$ and $u$ satisfies \eqref{localBd} and \eqref{localNorm}, then for any $r$ sufficiently small, \eqref{localEst} holds with $\eps = 0$ and $C$ depending on $\la$, $\La$, $\mu$, $q$, and $C_0$.
\item[{\rm (b)}] If $q = 2$ and $u$ satisfies $\norm{u}_{L^\iny\pr{B_d}} \le M$, $\norm{\gr u}_{L^2\pr{B_{\tilde b}}} \ge 1$, then for any $r$ sufficiently small
\begin{equation}
\norm{u}_{L^\iny\pr{B_r}} \ge r^{C\pr{\log M + K^2}},
\label{localEst2}
\end{equation}
where $C$ depends on $\la$, $\La$, and $\mu$.
\end{enumerate}
\end{thm}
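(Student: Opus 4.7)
The proof will follow the paper's overall strategy: rewrite the planar equation as a first-order Vekua system for the complex derivative $f := \partial u$, factor $f$ via the similarity principle, and apply a generalized Hadamard three-circle theorem. The role of the Lipschitz regularity of $A$ is to replace the smallness and positivity hypotheses of Theorem \ref{OofV} with a bi-Lipschitz quasiconformal change of variables that reduces \eqref{epde3} to the isotropic case.

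I would begin with isothermal coordinates: solving an auxiliary Beltrami equation built from the entries of $A$ produces a bi-Lipschitz quasiconformal map $\Phi$, with distortion controlled by $\la$, $\La$, $\mu$, so that $\tilde u := u \circ \Phi^{-1}$ satisfies $-\Delta \tilde u + \tilde W \cdot \gr \tilde u = 0$ with $\norm{\tilde W}_{L^q} \le CK$. Setting $f := \partial \tilde u$ and using $4\bar\partial f = \Delta \tilde u$, a direct computation in complex notation (using that $\tilde u$ is real) rewrites the drift term as an expression linear in $f$ and $\bar f$ with $L^q$-coefficients of size $\norm{\tilde W}_{L^q}$, giving a Vekua system
\[
  \bar\partial f = \alpha f + \beta \bar f,
  \qquad
  \norm{\alpha}_{L^q} + \norm{\beta}_{L^q} \le CK.
\]

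For part (a), where $q > 2$, I would invoke the standard Vekua similarity principle to factor $f = e^{\omega} F$ with $F$ holomorphic in $B_d$ and $\omega \in C^{0,1-2/q}$ satisfying $\norm{\omega}_{L^\iny} \le CK$. A Caccioppoli inequality converts the growth bound \eqref{localBd} into $\norm{f}_{L^2(B_{d/2})} \le e^{CK}$, while the normalization \eqref{localNorm} together with interior regularity yields $\norm{F}_{L^\iny(B_{b'})} \ge c > 0$ on some ball of definite size. Applying Hadamard's three-circle theorem to the holomorphic $F$ then bounds its order of vanishing at the origin by $CK$; the bounded factor $e^\omega$ transfers the bound to $f$, and integrating $f = \partial \tilde u$ recovers the order of vanishing for $u$, yielding \eqref{localEst} with $\eps = 0$.

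For part (b), where $q = 2$, the Cauchy transform maps $L^2$ only into $\BMO$ (not $L^\iny$), so the similarity principle now furnishes $f = e^\omega F$ with $F$ still holomorphic but $\omega$ only in $\BMO$ with $\norm{\omega}_\BMO \le CK$. John--Nirenberg then gives $\norm{e^{\pm\omega}}_{L^p(B_R)} \le C\abs{B_R}^{1/p}$ for $p \lesssim 1/K$, and the three-circle inequality must be applied in $L^p$-averaged form for $F$. Optimizing the exponent $p \sim 1/K$ introduces a factor exponential in $K^2$ on the upper bound for $F$, which becomes the $K^2$ summand in \eqref{localEst2}. The hypothesis $\norm{\gr u}_{L^2(B_{\tilde b})} \ge 1$ feeds directly into the required lower bound on $\norm{f}_{L^2}$, while $\norm{u}_{L^\iny} \le M$ produces the $\log M$ summand through Caccioppoli. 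I expect the main obstacle to be executing this $\BMO$-weighted three-circle argument cleanly enough to obtain the sharp quadratic (rather than super-quadratic) dependence on $K$; this requires carefully balancing the John--Nirenberg exponent against the Hadamard interpolation parameter, together with Caccioppoli estimates tailored to the critical drift.
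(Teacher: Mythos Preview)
Your approach is correct in spirit and would lead to the stated result, but it differs from the paper's route in a notable way. You propose first passing to isothermal coordinates via a bi-Lipschitz quasiconformal map (using the Lipschitz regularity of $A$) so as to reduce to $-\Delta\tilde u+\tilde W\cdot\nabla\tilde u=0$, and then running the Vekua/similarity-principle machinery on $f=\partial\tilde u$. The paper instead keeps the variable-coefficient operator throughout: after normalising to $\det A=1$, it uses the algebraic factorisation $\di(A\nabla)=(D+\Gamma)\widetilde D$ (Lemma~\ref{decompLem}) to obtain directly the first-order equation $D\widetilde Du=(\widetilde\Upsilon-\Gamma)\widetilde Du$, and then applies the similarity principle and the three-quasi-circle theorem of Section~\ref{S4} to $\widetilde Du$. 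Your change of variables buys you genuine holomorphy of $F$ and the classical Hadamard theorem on round circles; the paper's approach avoids the coordinate change at the cost of working with the Beltrami operator $D_w$ and its associated quasi-balls, which fits more seamlessly with the framework set up for Theorems~\ref{OofV} and~\ref{OofV1}.

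Two places in your sketch deserve tightening. First, in part (a) you assert that the normalisation \eqref{localNorm} yields a lower bound on $F$ on a ball of definite size; this step is not automatic, and the paper handles it by a dichotomy argument (either $u\ge e^{-K}$ on all of $Q_{6/5}$, in which case the conclusion is trivial, or two points with a gap of order $1$ force $\norm{\nabla u}_{L^\infty(Q_{6/5})}\gtrsim 1$). Second, in part (b) your BMO/John--Nirenberg plan is the right idea, but you must track not only the BMO seminorm of $\omega=T(\alpha,\beta)$ but also its mean on $B_r$, which can be as large as $CK\log(1/r)$; this is exactly the $r^{-sCK}$ factor in the paper's Lemma~\ref{lemma0701} (borrowed from \cite{KW15}), and without it the three-circle bookkeeping will not close with the stated exponent $C(\log M+K^2)$.
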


Finally, we consider a very general form of the equation without any smallness assumptions on the three lower order terms.
In this setting, we impose a comprehensive sign condition on the lower order terms.

\begin{thm}
\label{OofV3}
Let $A$ be a symmetric matrix for which \eqref{ellip}, \eqref{ABd}, and \eqref{gradDec} hold.
Assume that $\norm{W_1}_{L^{q}\pr{B_d}} \le K$ for some $q \in [2,\iny]$, $\norm{W_2}_{L^{\iny}\pr{B_d}} \le K$, and $\norm{V}_{L^{\iny}\pr{B_d}} \le K^2$.
Assume further that $W_1$ is weakly curl-free and that $V - W_1 \cdot W_2 \ge 0$ a.e.
Let $u$ be a real-valued solution to \eqref{epde} in $B_d$.
\begin{enumerate}
\item[{\rm (a)}] If $A = I$, $q = \iny$ and $u$ satisfies \eqref{localBd} with $d$ replaced by $9/5$ and \eqref{localNorm} with $b$ replaced by $1$, then for any $r$ sufficiently small, \eqref{localEst} holds with $\eps = 0$ and $C$ depending on $C_0$.
\item[{\rm (b)}] If $q = 2$, $W_2, V \equiv 0$, and $\norm{u}_{L^\iny\pr{B_d}} \le M$, $\norm{\gr u}_{L^2\pr{B_{\tilde b}}} \ge 1$, then for any $r$ sufficiently small, \eqref{localEst2} holds with $C$ depending on $\la$, $\La$, and $\mu$.
\end{enumerate}
\end{thm}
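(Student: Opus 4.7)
My plan is to exploit the two structural hypotheses---weak curl-freeness of $W_1$ and the sign condition $V - W_1 \cdot W_2 \ge 0$---to recast \eqref{epde} as an equation with a non-negative zero-order coefficient, and then to execute the three-step scheme advertised in the abstract: construct a positive multiplier by solving a Dirichlet problem, substitute to cancel the zero-order term, transform the resulting planar equation to a Vekua/Beltrami system, and apply a generalization of Hadamard's three-circle theorem.

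For part (a), simple connectivity of the ball together with curl-freeness of $W_1$ lets us write $W_1 = \gr \chi$, and the hypothesis $q = \iny$ makes $\chi$ Lipschitz and bounded. Because $A = I$, the substitution $\tilde u = e^{\chi} u$ converts \eqref{epde} into
\[
-\Delta \tilde u + (W_1 + W_2) \cdot \gr \tilde u + (V - W_1 \cdot W_2) \tilde u = 0,
\]
whose zero-order coefficient is non-negative by hypothesis. Solving the Dirichlet problem for this equation on a slightly enlarged ball with boundary value $1$ via Lax--Milgram, and invoking the weak maximum principle together with Harnack's inequality, produces a positive multiplier $\phi$ with uniform two-sided bounds $0 < c \le \phi \le 1$ on $B_{9/5}$. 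Writing $\tilde u = \phi v$ cancels the zero-order term and yields
\[
\Delta v + \gr v \cdot \pr{2 \gr \log \phi - W_1 - W_2} = 0,
\]
a first-order perturbation of Laplace's equation with $L^\iny$ coefficient. In the plane this is a Vekua equation: introducing a stream function $\tilde v$ for $\phi^2 \gr v$ and letting $f = v + i \tilde v$, one checks that $\bar\partial f = \alpha f + \beta \bar f$ with $\alpha, \beta \in L^\iny$ bounded in terms of $K$. The three-circle theorem for such pseudo-holomorphic $f$ produces a doubling inequality for $v$, which transfers back to $u$ via the bounds on $\phi$ and $e^{\chi}$ and delivers \eqref{localEst} with $\eps = 0$.

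For part (b) the sign condition is vacuous, the equation collapses to $-\di(A \gr u + W_1 u) = 0$, and now $A \ne I$ while $\chi$ only lies in $W^{1,2}$, so the exponential substitution no longer bounds $e^{\chi}$ pointwise. I would instead apply the positive-multiplier and Vekua steps directly in divergence form: the Lipschitz regularity of $A$ renders the adjoint problem solvable with a positive solution $\phi$ that is bounded above and below by De Giorgi--Nash--Moser theory, and the substitution $u = \phi v$ produces a divergence-form equation for $v$ with weight $\phi^2 A$ and no lower-order term, which again reduces to a Vekua system after a quasiconformal change of coordinates. The three-circle estimate then delivers \eqref{localEst2}, with the exponent $\log M + K^2$ reflecting $L^2$ rather than $L^\iny$ integrability of $W_1$. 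The main obstacle---in both parts but especially in part (b)---is constructing the multiplier $\phi$ with quantitative two-sided bounds whose dependence on $K$ exactly matches the exponent claimed in \eqref{localEst}/\eqref{localEst2}; the $q = 2$ case requires weighted Harnack estimates that rely crucially on the curl-free structure of $W_1$ to absorb the borderline Lebesgue integrability.
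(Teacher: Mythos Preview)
Your overall architecture---gauge by $e^\chi$, divide by a positive multiplier, reduce to a first-order system, apply a three-circle theorem---is close to the paper's, but two steps need repair.  First, your claim that the Dirichlet solution $\phi$ satisfies ``uniform two-sided bounds $0<c\le\phi\le 1$'' with $c$ independent of $K$ is not achievable: the Harnack constant for $-\Delta+(W_1+W_2)\cdot\nabla+(V-W_1\cdot W_2)$ with $\|W_i\|_\infty\le K$, $\|V\|_\infty\le K^2$ depends exponentially on $K$.  The paper instead builds the multiplier by explicit sub/supersolutions $\phi_1=e^{3Kx}$, $\phi_2=e^{6K}$, obtaining $e^{-CK}\le\phi\le e^{CK}$, and this $K$-dependence feeds directly into the exponent of \eqref{localEst}.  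Second, your stream-function step is incorrect as stated: after the substitution you have $\Delta v+(2\nabla\log\phi-W_1-W_2)\cdot\nabla v=0$, and $\phi^2\nabla v$ is \emph{not} divergence-free (one computes $\di(\phi^2\nabla v)=\phi^2(W_1+W_2)\cdot\nabla v$), so no stream function exists for it.  The paper's fix is to work not with $v$ but with $w=Dv$ where $D=\partial_x-i\partial_y+W_{11}-iW_{12}$; the curl-free hypothesis gives $\bar D D v=\di(\nabla v+W_1v)+W_1\cdot\nabla v+|W_1|^2 v$, and then $w$ satisfies $\bar\partial w+\alpha w=0$ with $\|\alpha\|_\infty\le CK$.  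One then applies the standard Hadamard theorem to $e^{T(\alpha)}w$.

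\textbf{Part (b).} Here your plan has a genuine gap.  With $W_2,V\equiv 0$ the adjoint equation is $-\di(A^T\nabla\phi)+W_1\cdot\nabla\phi=0$, for which $\phi\equiv 1$ is already a solution; the substitution $u=\phi v$ therefore does nothing, and you are back to the original divergence-form equation with the $L^2$ term $W_1$ intact.  The multiplier mechanism of Section~\ref{S3} cannot remove a first-order term sitting \emph{inside} the divergence.  The paper's key observation, which you miss, is that curl-freeness of $W_1$ permits a $90^\circ$ rotation of coordinates: setting $\tilde u(x,y)=u(y,-x)$ and $\tilde A(x,y)$ the correspondingly rotated matrix, one has $\di(W_1 u)(y,-x)=\widetilde W_1\cdot\nabla\tilde u$, so the equation becomes
\[
-\di(\tilde A\nabla\tilde u)-\widetilde W_1\cdot\nabla\tilde u=0,
\]
which is precisely the setting of Theorem~\ref{OofV2}(b), already proved.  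Without this reduction there is no clear route to the quantitative bound \eqref{localEst2} in the borderline $q=2$ case.
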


As usual, the order of vanishing estimates are used in combination with a scaling argument to prove the following quantitative unique continuation at infinity estimates.
Since the smallness conditions required in Theorem \ref{OofV} do not hold up under scaling, we do not have a corresponding Landis theorem for that type of equation.
However, the three other settings described by Theorems \ref{OofV1}, \ref{OofV2}, and \ref{OofV3} lead to Landis-type results.

\begin{thm}
\label{LandisThm}
Assume that conditions \eqref{ellip} and \eqref{ABd} hold, and that for some $q \in (2, \iny]$, $\norm{W}_{L^{q}\pr{\R^2}} \le \al$.
Let $u$ be a real-valued solution to \eqref{epde2} in $\R^2$ for which
\begin{align}
& \abs{u\pr{z}} \le \exp\pr{C_1 \abs{z}^{1 - \frac 2 {q}}}
\label{uBd} \\
& \abs{u\pr{0}} \ge 1.
\label{normed}
\end{align}
Then for any $R$ sufficiently large, we have
\begin{equation}
\inf_{\abs{z_0} = R} \norm{u}_{L^\iny\pr{B_1\pr{z_0}}} \ge \exp\pr{- C R^{1 - \frac 2 {q}} \log R},
\label{globalEst}
\end{equation}
where $C$ depends on $\la$, $\La$, $q$, $\al$, and $C_1$.
\end{thm}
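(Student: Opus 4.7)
The plan is to apply Theorem \ref{OofV1}(a) after a single translation-and-scaling that places the target point $z_0$ at the origin of the rescaled coordinates, with the scaling parameter chosen so that both the growth assumption \eqref{uBd} and the normalization \eqref{normed} translate into the hypotheses of Theorem \ref{OofV1}(a) for the rescaled function. No propagation along chains is needed.

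Fix $z_0$ with $\abs{z_0} = R$ and set $\rho = 2 R / b$, where $b$ is the constant appearing in \eqref{localNorm}. Define $v\pr{z} = u\pr{z_0 + \rho z}$ on a sufficiently large ball $B_D$. Then $v$ satisfies
$$
- \di\pr{\tilde A \gr v + \tilde W v} = 0,
$$
where $\tilde A\pr{z} = A\pr{z_0 + \rho z}$ inherits \eqref{ellip}--\eqref{ABd} and $\tilde W\pr{z} = \rho\, W\pr{z_0 + \rho z}$. Changing variables,
$$
\norm{\tilde W}_{L^q\pr{B_D}} = \rho^{1 - 2/q}\, \norm{W}_{L^q\pr{B_{\rho D}\pr{z_0}}} \le \pr{2 R / b}^{1 - 2/q}\, \al,
$$
so we take $K \bdef C R^{1 - 2/q}$. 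The growth bound \eqref{uBd} gives, on $B_d$,
$$
\abs{v\pr{z}} \le \exp\pr{C_1 \pr{R + \rho d}^{1 - 2/q}} \le \exp\pr{C_0 K},
$$
verifying \eqref{localBd}. The normalization \eqref{normed} feeds in at $z_* = - z_0 / \rho$, which satisfies $\abs{z_*} = b / 2 < b$ and $v\pr{z_*} = u\pr{0}$, hence $\norm{v}_{L^\iny\pr{B_b}} \ge 1$ and \eqref{localNorm} holds. Theorem \ref{OofV1}(a) then yields
$$
\norm{v}_{L^\iny\pr{B_r}} \ge r^{C K}
$$
for every $r$ below a universal small threshold depending only on the listed constants.

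Taking $r = 1 / \rho = b / \pr{2 R}$, which lies below the threshold once $R$ is large, and observing that the preimage of $B_1\pr{z_0}$ under $z \mapsto z_0 + \rho z$ is exactly $B_{1/\rho}\pr{0}$ in the $v$-coordinates, one obtains
$$
\norm{u}_{L^\iny\pr{B_1\pr{z_0}}} = \norm{v}_{L^\iny\pr{B_{1/\rho}}} \ge \pr{b / \pr{2 R}}^{C K} \ge \exp\pr{- C R^{1 - 2/q} \log R},
$$
after relabeling constants. Since the estimate is uniform over $z_0$ on the sphere of radius $R$, taking the infimum yields \eqref{globalEst}.

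The main obstacle is the choice of $\rho$: it must be at least $R/b$ so that the preimage of $B_b\pr{0}$ contains the $u$-origin (sending \eqref{normed} into \eqref{localNorm}), but it cannot be larger than necessary, since $\rho$ controls both the effective parameter $K = \rho^{1 - 2/q}\al$ and the upper-bound exponent $C_1 \pr{R + \rho d}^{1-2/q}$. The choice $\rho = 2 R / b$ threads the needle: it keeps $\abs{z_*} = b / 2$ strictly inside $B_b$, keeps $R + \rho d$ of order $R$, and makes $K$ precisely of the critical order $R^{1 - 2/q}$ allowed by the growth exponent in \eqref{uBd}. Once this balance is struck, the Landis estimate follows from a single application of Theorem \ref{OofV1}(a) at the radius $r = 1/\rho$, with the logarithmic factor coming from $\log\pr{1/r} = \log\pr{2R/b}$.
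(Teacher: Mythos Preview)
Your proof is correct and follows essentially the same scaling argument as the paper's own proof in Section~\ref{S9}. The only cosmetic difference is the normalization: the paper places the target point at $\abs{z_0}=bR$ and rescales by the factor $R$ (so that $\widetilde{z_0}=-z_0/R$ lands exactly on $\partial B_b$), whereas you place $z_0$ at radius $R$ and rescale by $\rho=2R/b$ (so that $z_*$ lands strictly inside $B_b$); both choices lead to the same application of Theorem~\ref{OofV1}(a) at $r\sim 1/R$ and the same final estimate up to constants.
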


\begin{thm}
\label{LandisThm2}
Let $A$ be a symmetric matrix for which \eqref{ellip}, \eqref{ABd}, and \eqref{gradDec} hold.
Assume that $\norm{W}_{L^{q}\pr{\R^2}} \le \al$ for some $q \in [2, \iny)$.
Let $u$ be a real-valued solution to \eqref{epde3} in $\R^2$.
\begin{enumerate}
\item[{\rm (a)}] If $q > 2$ and \eqref{uBd} and \eqref{normed} hold, then for any $R$ sufficiently large, \eqref{globalEst} holds with $C$ depending on $\la$, $\La$, $\mu$, $q$, $\al$, and $C_1$.
\item[{\rm (b)}] If $q = 2$, there exists $m > 0$ so that $\abs{u\pr{z}} \le \abs{z}^m$ when $\abs{z} \ge 1$, and $\norm{\gr}_{L^2\pr{B_{1/2}}} \ge 1$, then
\begin{equation}
\inf_{\abs{z_0} = R} \norm{u}_{L^\iny\pr{B_1\pr{z_0}}} \ge \exp\pr{- C \pr{\log R}^2},
\label{globalEst2}
\end{equation}
where $C$ depends on $\la$, $\La$, $\mu$, $\al$, and $m$.
\end{enumerate}
\end{thm}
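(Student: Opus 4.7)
The plan for both parts is to reduce the global estimate to the corresponding case of Theorem~\ref{OofV2} by a standard rescaling near $z_0$. Fix $R$ large and $z_0$ with $\abs{z_0} = R$, choose a scale $\rho$ comparable to $R$, and set
\[
v(z) = u(\rho z + z_0), \qquad \tilde A(z) = A(\rho z + z_0), \qquad \tilde W(z) = \rho\, W(\rho z + z_0).
\]
A chain-rule computation shows that $v$ solves $-\di(\tilde A \gr v) + \tilde W \cdot \gr v = 0$ on $B_d$, that $\tilde A$ remains symmetric with the same constants $\la,\La$, and that $\norm{\tilde W}_{L^q(B_d)} = \rho^{1-2/q} \norm{W}_{L^q(B_{d\rho}(z_0))} \le \al\, \rho^{1-2/q}$; in particular the natural $K$ to feed into Theorem~\ref{OofV2} is $\asymp R^{1-2/q}$ when $q > 2$ and the constant $\al$ when $q = 2$.

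For part (a) I would take $\rho$ to be a suitable multiple of $R/b$, so that $-z_0/\rho \in B_b$; then $v(-z_0/\rho) = u(0)$ together with \eqref{normed} gives \eqref{localNorm} for $v$, while \eqref{uBd} evaluated on $B_{d\rho}(z_0)$ gives \eqref{localBd} for $v$ with the new $C_0$ absorbing $C_1$. Theorem~\ref{OofV2}(a) then produces $\norm{v}_{L^\infty(B_r)} \ge r^{CK}$ for $r$ sufficiently small; setting $r = 1/\rho$ translates $B_r$ in $v$-coordinates into $B_1(z_0)$ in $u$-coordinates, yielding
\[
\norm{u}_{L^\infty(B_1(z_0))} \ge \rho^{-CR^{1-2/q}} \ge \exp\pr{-C R^{1-2/q} \log R},
\]
which is \eqref{globalEst}.

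For part (b) I would take $\rho$ to be a suitable multiple of $R/\tilde b$, so that $B_{1/2}(0) \subset B_{\tilde b \rho}(z_0)$; the chain rule then yields $\norm{\gr v}_{L^2(B_{\tilde b})} \ge \norm{\gr u}_{L^2(B_{1/2}(0))} \ge 1$, the polynomial growth $\abs{u(z)} \le \abs{z}^m$ gives $\norm{v}_{L^\infty(B_d)} \le (CR)^m$ so that $\log M \lesssim m \log R$, and $\norm{\tilde W}_{L^2(B_d)} \le \al$ makes $K^2$ a mere constant. Theorem~\ref{OofV2}(b) therefore provides $\norm{v}_{L^\infty(B_r)} \ge r^{C(\log M + K^2)} \ge r^{C \log R}$ for small $r$; choosing $r \asymp 1/\rho$ yields $\norm{u}_{L^\infty(B_1(z_0))} \ge \exp\pr{-C (\log R)^2}$, which is \eqref{globalEst2}.

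The main technical point to monitor is the scaling behaviour of the Lipschitz constant: the rescaled matrix satisfies $\norm{\gr \tilde a_{ij}}_{L^\infty} \le \rho \mu$, so Theorem~\ref{OofV2}, whose constant $C$ is declared to depend on $\mu$, is in fact being invoked with $\mu$ replaced by $\rho \mu \asymp R \mu$. This would destroy the target bounds unless one verifies that $\mu$ enters the conclusion of Theorem~\ref{OofV2} only through a scale-invariant quantity, for example via the construction of the positive multiplier and the associated Beltrami change of coordinates on the unit-size ball $B_d$. Tracking this dependence through the proof of Theorem~\ref{OofV2} is what I expect to be the genuine obstacle; once that is in hand, the verifications of \eqref{localBd}--\eqref{localNorm} and the back-substitutions above are routine.
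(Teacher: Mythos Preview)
Your rescaling argument is exactly the one the paper uses: it too sets $u_R(z)=u(z_0+Rz)$ with $|z_0|=bR$, verifies $\|W_R\|_{L^q(B_d)}\le\al R^{1-2/q}$, checks the normalization hypotheses of Theorem~\ref{OofV2} for $u_R$ (with $K=\al R^{1-2/q}$ in part~(a) and $K=\al$, $M=[(b+d)R]^m$ in part~(b)), and then reads off \eqref{globalEst} and \eqref{globalEst2} by taking $r=1/R$. The paper does not address the point you raise in your final paragraph: it simply invokes Theorem~\ref{OofV2} on the rescaled data and asserts that the resulting constant depends on the original $\mu$, without tracking how the Lipschitz bound $R\mu$ for $A_R$ propagates through the proof of Theorem~\ref{OofV2}. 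So the obstacle you identify is one the paper's own argument leaves implicit rather than resolves.
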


\begin{thm}
\label{LandisThm3}
Let $A$ be a symmetric matrix for which \eqref{ellip}, \eqref{ABd}, and \eqref{gradDec} hold.
Assume that $\norm{W_1}_{L^{q}\pr{\R^2}} \le \al_1$ for some $q \in \brac{2, \iny}$, $\norm{W_2}_{L^{\iny}\pr{\R^2}} \le \al_2$, $\norm{V}_{L^{\iny}\pr{\R^2}} \le \al_0$, where $W_1$ is weakly curl-free and $V - W_1 \cdot W_2 \ge 0$ a.e.
Let $u$ be a real-valued solution to \eqref{epde} in $\R^2$.
\begin{itemize}
\item[(a)] If $A = I$, $q = \iny$ and \eqref{uBd} and \eqref{normed} hold, then for any $R$ sufficiently large, \eqref{globalEst} holds with $C$ depending on $\al_1$, $\al_2$, $\al_0$, and $C_1$.
\item[(b)] If $q = 2$, $W_2, V \equiv 0$, there exists $m > 0$ so that $\abs{u\pr{z}} \le \abs{z}^m$ when $\abs{z} \ge 1$, and $\norm{\gr}_{L^2\pr{B_{1/2}}} \ge 1$, then for any $R$ sufficiently large, \eqref{globalEst2} holds with $C$ depending on $\la$, $\La$, $\mu$, $\al_1$ and $m$.
\end{itemize}
\end{thm}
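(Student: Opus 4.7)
The plan is to deduce both statements from the local order of vanishing estimates in Theorem \ref{OofV3} by the standard rescaling reduction. Fix $R$ large and $z_0 \in \R^2$ with $\abs{z_0} = R$, set $\rho = 2R$, and consider the rescaled function $\tilde u(z) := u(z_0 + \rho z)$. A direct computation shows that $\tilde u$ solves an equation of the form \eqref{epde} with
\[
\tilde A(z) = A(z_0 + \rho z), \quad \tilde W_i(z) = \rho\, W_i(z_0 + \rho z), \quad \tilde V(z) = \rho^2\, V(z_0 + \rho z),
\]
and the ellipticity of $\tilde A$, the weak curl-free property of $\tilde W_1$, and the pointwise sign condition $\tilde V - \tilde W_1 \cdot \tilde W_2 \ge 0$ are all preserved under the change of variables. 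Moreover, since $\abs{-z_0/\rho} = 1/2 < 1$, the point $-z_0/\rho$ lies in $B_1$ with $\abs{\tilde u(-z_0/\rho)} = \abs{u(0)} \ge 1$ by \eqref{normed}, so the analogue of \eqref{localNorm} is automatic.

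For part (a), with $A = I$ and $q = \iny$, the rescaled bounds $\|\tilde W_i\|_{L^\iny(B_{9/5})} \le \rho\, \al_i$ and $\|\tilde V\|_{L^\iny(B_{9/5})} \le \rho^2 \al_0$ allow me to take $\tilde K = 2R \max\{\al_1, \al_2, \sqrt{\al_0}\}$. The growth assumption \eqref{uBd} then yields
\[
\|\tilde u\|_{L^\iny(B_{9/5})} \le \exp\bigl(C_1 (R + (9/5)\rho)\bigr) \le \exp(C_0 \tilde K)
\]
after absorbing geometric factors into $C_0$. I would apply Theorem \ref{OofV3}(a) to $\tilde u$ at scale $r = 1/\rho$, which corresponds exactly to $B_1(z_0)$ under the unscaling, producing $\|u\|_{L^\iny(B_1(z_0))} \ge \rho^{-C\tilde K} \gtrsim \exp(-C' R\log R)$, i.e.\ \eqref{globalEst} with $q = \iny$.

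For part (b), the scale-invariance of $L^2$ in dimension two gives $\|\tilde W_1\|_{L^2(B_d)} \le \al_1$, so $\tilde K = \al_1$ is independent of $R$. The polynomial growth $\abs{u(z)} \le \abs{z}^m$ for $\abs{z} \ge 1$ produces $\tilde M := \|\tilde u\|_{L^\iny(B_d)} \le (R + \rho d)^m$, so $\log \tilde M = O(\log R)$, while the gradient normalization $\|\gr u\|_{L^2(B_{1/2})} \ge 1$ transfers to $\|\gr \tilde u\|_{L^2(B_{\tilde b})} \ge 1$ provided $\rho$ is taken large enough that $B_{1/2}(0) \su B_{\rho \tilde b}(z_0)$. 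Applying Theorem \ref{OofV3}(b) at $r = 1/\rho$ then gives
\[
\|u\|_{L^\iny(B_1(z_0))} \ge \rho^{-C(\log \tilde M + \tilde K^2)} \gtrsim \exp\bigl(-C'(\log R)^2\bigr),
\]
which is \eqref{globalEst2}.

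The main obstacle lies in part (b): the rescaling inflates the Lipschitz constant of $A$ to $\rho\mu \sim R\mu$, so the constant $C$ produced by Theorem \ref{OofV3}(b) is being invoked at a value of $\mu$ that grows with $R$. The argument only closes if this dependence is sufficiently mild that $C(\rho\mu) \log R = o(\log R)$ once reintegrated into the exponent, or if the Lipschitz norm enters the local estimate only through lower order corrections. Since Theorem \ref{OofV3}(b) is ultimately proved via a Beltrami reduction and a Hadamard-type three-circle bound whose quantitative content is controlled primarily by the ellipticity constants $\la, \La$, I expect this dependence to be benign, but verifying it carefully is the key delicate point.
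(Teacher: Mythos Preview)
Your approach is essentially the paper's. For part (a) it is identical up to cosmetic choices of scaling parameter (the paper takes $\abs{z_0} = R$ and dilation factor $R$ rather than $2R$). For part (b), the paper organizes the argument slightly differently: instead of rescaling and then invoking Theorem \ref{OofV3}(b), it first applies the global curl-free rotation of Section \ref{curlCor} to recast $-\di(A\gr u + W_1 u) = 0$ as an equation of the form \eqref{epde3}, and then simply quotes the already-proved Theorem \ref{LandisThm2}(b). Since Theorem \ref{OofV3}(b) was itself established by that same reduction to Theorem \ref{OofV2}(b), the two routes are the same path traversed in different orders.

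The obstacle you isolate in part (b) --- that rescaling sends the Lipschitz bound $\mu$ to $R\mu$ --- is a genuine concern, and the paper does not address it either: its proof of Theorem \ref{LandisThm2}(b) applies Theorem \ref{OofV2}(b) to the rescaled matrix $A_R(z) = A(z_0 + Rz)$, whose Lipschitz constant is $R\mu$, and then records the outcome as depending only on the original $\mu$ without further comment. So your caution is well placed; the paper sidesteps rather than resolves the point you flagged.
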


We point out now that the estimates in Theorems \ref{LandisThm} -- \ref{LandisThm3} are (almost) sharp in an exterior domain.
Consider $u\pr{r} = \exp\pr{- r^\al}$ for some $\al \in \pr{0, 1}$ to be determined.
A computation gives
\begin{align*}
\gr u\pr{r} &= - \al r^{\al -1} \pr{\frac x r, \frac y r} u\pr{r} \\
\LP u\pr{r} &= \al^2 r^{2\pr{\al - 1}}\pr{1 - r^{-\al}} u\pr{r}.
\end{align*}
If we define
\begin{align*}
W &= - \al r^{\al-1} \pr{\frac x r, \frac y r}
\end{align*}
then
$$\di \pr{\gr u + W u} = 0.$$
If $q = \iny$, set $\al = 1$ and note that $W \in L^\iny\pr{\R^2 \setminus B_1}$.
Otherwise, if $q \in \pr{2, \iny}$, then for any $\de \in \pr{0, q - 2}$, let $\al = 1 - \frac {2 + 2\de} {q_1}$ and we see that
\begin{align*}
\norm{W}^{q}_{L^{q}\pr{\R^2 \setminus B_1}}
&\le C \int_1^\iny r^{-\pr{2 + 2\de}} r \, dr
= \frac C {2\de} 
< \iny.
\end{align*}
It follows that Theorem \ref{LandisThm} is almost sharp in $\R^2 \setminus B_1$ with an arbitrarily small error.

With
\begin{align*}
W &= - \al r^{\al-1} \pr{1 - r^{-\al}} \pr{\frac x r, \frac y r},
\end{align*}
we have
$$- \di \pr{\gr u} + W \cdot \gr u = 0.$$
Defining $q$ as before, we see that Theorem \ref{LandisThm2} is almost sharp in $\R^2 \setminus B_1$ with an arbitrarily small error.

Finally, if we set $u = \exp\pr{-r}$ and define
\begin{align*}
V &= \frac 1 3 \pr{1 - r^{-1}} \\
W_1 &= \frac 1 3 \pr{\frac x r, \frac y r}  \\
W_2 &= - \frac 1 3 \pr{1 - r^{-1}} \pr{\frac x r, \frac y r},
\end{align*}
then $-\di\pr{\gr u + W_1 u} + W_2 \cdot \gr u + V u = 0$, $W_1, W_2, V \in L^\iny$ and $V - W_1 \cdot W_2 \ge 0$.  
Moreover, $W_1$ is curl-free.
Therefore, Theorem \ref{LandisThm3} is sharp in an exterior domain.

Since we are working with real-valued solutions and equations in the plane, the best approach to proving these theorems is to the use the relationship between our solutions and the solutions to first order equations in the complex plane, Beltrami systems.
Therefore, we will closely follow the proof ideas that were first developed in \cite{KSW15}, with further generalizations in \cite{DKW17}.
Since we are no longer working with bounded lower order terms, but rather with singular potentials, we also borrow some of the ideas that were presented in \cite{KW15} where the authors considered drift equations with singular potentials.

Our results generalize those previously established in \cite{KSW15}, \cite{KW15}, and \cite{DKW17} in a few ways.
First, in a couple of the settings that we consider, the leading operator is no longer assumed to be Lipschitz continuous and symmetric as in \cite{DKW17}.
(In \cite{KSW15} and \cite{KW15}, the leading operator is the Laplacian.)
In those cases, we only assume that $A$ is bounded and uniformly elliptic.
Second, we consider when all of the lower order terms are unbounded.
In \cite{KSW15} and \cite{DKW17}, the two lower order terms, $V$ and $W$, are assumed to be bounded; whereas in \cite{KW15}, one lower order term, $W$, can be unbounded, but $V$ has to be zero.
One of our settings deals with equations that have three singular lower order terms.
Third, we consider some very general elliptic equations that can have two non-trivial first order terms.
In \cite{KSW15}, \cite{KW15}, and \cite{DKW17}, it is always assumed that either $W_1 \equiv 0$ or $W_2 \equiv 0$.

When the lower order terms are not assumed to be bounded, our approach to the construction of the positive multipliers is completely new in this article.  
We use the existence of solutions to Dirichlet boundary value problems in combination with the maximum principle to argue that positive multipliers with appropriate pointwise bounds exist.

We remark that our current methods do not apply to the scale-invariant case of $V \in L^1\pr{\R^2}$.
This is not surprising since the counterexample of Kenig and Nadirashvili in \cite{KN00} implies that weak unique continuation can fail for the operator $\LP + V$ with $V \in L^1$. 

A similar problem was investigated by the first-named author and Zhu in \cite{DZ17} and \cite{DZ217}.
In these papers, the authors studied the quantitative unique continuation properties of solutions to equations of the form \eqref{epde} under the assumption that $L = -\LP$, $W_1 \equiv 0$, and the other lower order terms belong to some admissible Lebesgue spaces.
Since the proof techniques are based on certain $L^p - L^q$ Carleman estimates, the results apply to complex-valued solutions and equations in any dimension $n \ge 2$.
Consequently, the estimates derived in \cite{DZ17} and \cite{DZ217} are not as sharp as those that we prove in the current paper.
For a broader survey of related works, we refer the reader to \cite{KSW15} and the references therein.

The organization of this article is as follows.
In Section \ref{S2}, we discuss quasi-balls.
Quasi-balls are a natural generalization of standard balls and they are associated to a uniformly elliptic divergence-form operator.
Section \ref{S3} deals with the positive multipliers.
In particular, we construct a positive multiplier, prove that it has appropriate pointwise bounds, satisfies generalized Caccioppoli-type inequalities, and then show that its logarithm also has good bounds in some $L^t$ spaces.
The Beltrami operators are introduced in Section \ref{S4}.
Much of this section resembles work that was previously done in \cite{DKW17}, and we therefore omit some of the proofs.
In Section \ref{S5}, we use the tools that have been developed to prove Theorem \ref{OofV}.
Sections \ref{S6}, \ref{S7}, and \ref{S8} treat the proofs of Theorems \ref{OofV1}, \ref{OofV2}, and \ref{OofV3}, respectively.
The proofs of Theorems \ref{LandisThm} -- \ref{LandisThm3} are presented in Section \ref{S9}.

In addition to the main content of this paper, we rely on some theory regarding elliptic boundary value problems, and this content has been relegated to the appendices.
In Appendix \ref{AppA}, we prove a maximum principle.
Appendix \ref{AppB} presents a collection of results regarding the Green's functions for general elliptic operators in open, bounded, connected subsets of $\R^2$.
This work is based on the constructions that appear in \cite{GW82}, \cite{HK07}, and \cite{DHM16}.
We include this section for completeness since the specific representation that we sought was not available in the literature.
The results of the appendices are used in Section \ref{S2} where we argue that the positive multipliers satisfy appropriate pointwise bounds. \\

\nid {\bf Acknowledgement.}
Part of this research was carried out while the first author was visiting the National Center for Theoretical Sciences (NCTS) at National Taiwan University.
The first author wishes to the thank the NCTS for their financial support and their kind hospitality during her visits to Taiwan.

\section{Quasi-balls}
\label{S2}

Since we are working with variable-coefficient operators instead of the Laplacian, we will at times need to work with sets that are not classical balls.
Therefore, we introduce the notion of quasi-balls.

Throughout this section, assume that $L := - \di\pr{A \gr}$ is a second-order divergence form operator acting on $\R^2$ that satisfies the ellipticity and boundedness conditions described by \eqref{ellip} and \eqref{ABd}.
Let $\mathbf{L}\pr{\la, \La}$ denote the set of all such operators.
We start by discussing the fundamental solutions of $L$.
These results are based on the Appendix of \cite{KN85}.

\begin{defn}
A function $G$ is called a fundamental solution for $L$ with pole at the origin if
\begin{itemize}
\item $G \in H^{1,2}_{loc}\pr{\R^2 \setminus \set{0}}$, $G \in H^{1,p}_{loc}\pr{\R^2}$ for all $p < 2$, and for every $\vp \in C^\iny_0\pr{\R^2}$
$$\int a_{ij}\pr{z} D_i G\pr{z} \,  D_j \vp\pr{z} dz = \vp\pr{0}.$$
\item $\abs{G\pr{z} } \le C \log \abs{z}$, for some $C > 0$, {$|z|\ge C$}.
\end{itemize}
\end{defn}

\begin{lem}[Theorem A-2, \cite{KN85}]
There exists a unique fundamental solution $G$ for $L$, with pole at the origin and with the property that $\disp \lim_{\abs{z} \to \iny} G\pr{z} - g\pr{z} = 0$, where $g$ is a solution to $L g = 0$ in $\abs{z} > 1$ with $g = 0$ on $\abs{z} = 1$.
Moreover, there are constants $C_1, C_2, C_3, C_4, R_1 < 1 < R_2$, that depend on $\la$ and $\La$, such that
\begin{align*}
&C_1 \log\pr{\frac{1}{\abs{z}}} \le - G\pr{z} \le C_2 \log \pr{\frac{1}{\abs{z}}} \;\; \text{ for } \abs{z} < R_1 \\
& C_3 \log\abs{z} \le G\pr{z} \le C_4 \log \abs{z} \;\; \text{ for } \abs{z} > R_2.
\end{align*}
\label{fundSolBds}
\end{lem}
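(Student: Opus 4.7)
The plan is to construct $G$ as a normalized limit of Green's functions on exhausting balls, derive the two-sided logarithmic bounds from De~Giorgi--Nash--Moser theory applied on dyadic annuli, and establish uniqueness by a Liouville-type argument.

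For existence, let $G_R(\cdot,0)$ denote the Green's function for $L$ on $B_R$ with pole at the origin, as furnished by the theory developed in Appendix~\ref{AppB}. By the maximum principle $G_R \le 0$, and the standard small-ball analysis yields $G_R \asymp \log$ near the origin with constants uniform in $R$. Renormalize by fixing a reference point $z_* \in \partial B_1$ and setting $\tilde G_R := G_R(\cdot,0) - G_R(z_*, 0)$. Moser's Harnack inequality applied to $-G_R$, which is a positive $L$-harmonic function away from the pole, produces uniform bounds for $\tilde G_R$ on every fixed annulus $B_{R_0} \setminus B_\rho$; De~Giorgi--Nash--Moser then yields uniform Hölder estimates on such annuli. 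An Arzelà--Ascoli/diagonal argument extracts a subsequence converging locally uniformly on $\R^2 \setminus \{0\}$ to some $G$, and a standard distributional computation verifies that $G$ belongs to $H^{1,p}_{loc}(\R^2)$ for all $p < 2$, to $H^{1,2}_{loc}(\R^2 \setminus \{0\})$, and satisfies $LG = \delta_0$.

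To establish the logarithmic bounds, apply Moser's Harnack inequality to $-G$ on dyadic annuli $A_k = \{2^{-k-1} < |z| < 2^{-k}\}$ near the origin, obtaining $\sup_{A_k}(-G) \le C \inf_{A_k}(-G)$ with $C$ depending only on $\la, \La$. Combined with the unit-mass condition $\int_{\partial B_r} A \gr G \cdot \nu = 1$ for all small $r$, which quantifies the strength of the pole, a Harnack-chain argument running from a fixed annulus inward yields $-G(z) \asymp \log(1/|z|)$ on $|z| < R_1$. For the estimate at infinity, pull $G$ back through the inversion $z \mapsto z/|z|^2$: in two dimensions this conformal map converts $L$ into another divergence-form elliptic operator $\tilde L$ with the \emph{same} ellipticity constants $\la, \La$ (the conformal factors $|\det D\Phi|^{-1}$ and $D\Phi \cdot A \cdot (D\Phi)^T$ cancel precisely in the plane), so the near-zero argument applied to the transformed fundamental solution, followed by pull-back, produces $G(z) \asymp \log|z|$ on $|z| > R_2$, once the additive constant at infinity is fixed via the prescribed decay $G - g \to 0$.

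For uniqueness, given two fundamental solutions $G_1, G_2$ with the stated normalization, the difference $G_1 - G_2$ carries no distributional mass at the origin (the $\delta_0$ contributions cancel) and hence extends by elliptic regularity to a globally $L$-harmonic function on $\R^2$; it is moreover bounded at infinity since both $G_i$ approach the common $g$. A Liouville property for bounded $L$-harmonic functions on $\R^2$, itself an immediate consequence of Moser's Harnack on large balls, forces $G_1 - G_2$ to be constant, and the prescribed asymptotic decay pins this constant to be $0$. The chief technical obstacle is the transition from interior to exterior: one must verify that the Kelvin-type inversion preserves the divergence structure with controllable ellipticity constants, which ultimately rests on the planar conformality of $z \mapsto z/|z|^2$ and a direct chain-rule computation showing that the Jacobian factors cancel exactly in the way required.
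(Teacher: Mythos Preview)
The paper does not prove this lemma; it is quoted verbatim as Theorem~A-2 of \cite{KN85} and used as a black box. There is therefore no proof in the paper to compare your proposal against.

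That said, your sketch is a plausible reconstruction of the Kenig--Ni argument and is essentially sound. The exhaustion-by-balls construction with renormalization at a fixed reference point is standard, and your Harnack-chain derivation of the two-sided logarithmic bounds near the pole is the right mechanism. Your Kelvin-inversion step is also correct: in the plane the map $z\mapsto z/|z|^2$ has Jacobian $|z|^{-2}R$ with $R$ orthogonal, so the transformed coefficient matrix is $RAR^T$ and the ellipticity constants are preserved exactly, which lets you transfer the near-origin analysis to infinity. The uniqueness argument via Liouville is fine once you observe that $G_1-G_2$ satisfies $L(G_1-G_2)=0$ distributionally on all of $\R^2$ (not just $\R^2\setminus\{0\}$), though you should note that passing from $W^{1,p}_{\loc}$ with $p<2$ to the $W^{1,2}_{\loc}$ regularity needed for De~Giorgi--Nash--Moser requires a short bootstrap or removable-singularity step that you have elided.
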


The level sets of $G$ will be important to us.

\begin{defn}
Define a function $\ell: \R^2 \to \pr{0, \iny}$ as follows: $\ell\pr{z} = s$ iff $G\pr{z} = \ln s$.
Then set 
\begin{align*}
Z_s &= \set{ z \in \R^2 : G\pr{z} = \ln s} 
= \set{ z \in \R^2 : \ell\pr{z} = s} .
\end{align*}
We refer to these level sets of $G$ as {\bf quasi-circles.}
That is, $Z_s$ is the quasi-circle of radius $s$.
We also define (closed) {\bf quasi-balls} as
\begin{align*}
Q_s &= \set{ z \in \R^2 : \ell\pr{z} \le s} .
\end{align*}
Open {\bf quasi-balls} are defined analogously.
We may also use the notation $Q_s^L$ and $Z_s^L$ to remind ourselves of the underlying operator.
\end{defn}

The following lemma follows from the bounds given in Lemma \ref{fundSolBds}.
The details of the proof may be found in \cite{DKW17}.

\begin{lem}
There are constants $c_1, c_2, c_3, c_4, c_5, c_6, S_1 < 1 < S_2$, that depend on $\la$ and $\La$, such that if $z \in Z_s$, then
\begin{align*}
& s^{c_1} \le \abs{z} \le s^{c_2}  \;\; \text{ for } s \le S_1 \\
& c_5 s^{c_1} \le \abs{z} \le c_6 s^{c_4} \;\; \text{ for } S_1 < s < S_2 \\
& s^{c_3} \le \abs{z} \le s^{c_4} \;\; \text{ for } s \ge S_2.
\end{align*}
\label{ZsBounds}
\end{lem}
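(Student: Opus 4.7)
The plan is to invert the pointwise bounds on the fundamental solution $G$ supplied by Lemma \ref{fundSolBds}, using the defining relation that $z \in Z_s$ if and only if $G(z) = \ln s$. In each of the three regimes for $s$, I would first locate $Z_s$ inside a region where the sharp logarithmic bounds apply, and then solve those bounds for $\abs{z}$ in terms of $s$.

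For the small $s$ regime, choose $S_1 \in (0,1)$ small enough that $\ln s \le \ln S_1$ forces $\abs{z} < R_1$ for every $z \in Z_s$. This is possible because the continuous function $G$ is bounded below on $\set{\abs{z} \ge R_1}$: it is continuous on the compact annulus $R_1 \le \abs{z} \le R_2$, and it satisfies $G(z) \ge C_3 \log \abs{z} \ge 0$ for $\abs{z} \ge R_2$. Once $\abs{z} < R_1$ is secured, the inequalities $C_1 \log\pr{1/\abs{z}} \le -G(z) = \log\pr{1/s} \le C_2 \log\pr{1/\abs{z}}$ invert to $s^{1/C_1} \le \abs{z} \le s^{1/C_2}$, so that $c_1 := 1/C_1$ and $c_2 := 1/C_2$ suffice. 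The large $s$ regime is symmetric: pick $S_2 > 1$ so large that $\ln s \ge \ln S_2$ forces $\abs{z} > R_2$ (using that $G$ is bounded above on $\set{\abs{z} \le R_2} \setminus \set{0}$, since $G(z) \le -C_1 \log\pr{1/\abs{z}} \le 0$ for $\abs{z} < R_1$ and $G$ is continuous in between), then invert $C_3 \log \abs{z} \le \ln s \le C_4 \log \abs{z}$ to obtain $s^{1/C_4} \le \abs{z} \le s^{1/C_3}$, giving $c_3 := 1/C_4$ and $c_4 := 1/C_3$.

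The intermediate band $S_1 < s < S_2$ requires a softer argument, since $Z_s$ may cross into the annulus where only continuity of $G$ is available. The key observation is that $G(z) \to -\iny$ as $z \to 0$ and $G(z) \to +\iny$ as $\abs{z} \to \iny$ by the bounds above, so $G$ is proper on $\R^2 \setminus \set{0}$ and the preimage $G^{-1}\pr{\brac{\ln S_1, \ln S_2}}$ is a compact subset of $\R^2 \setminus \set{0}$. Hence there exist $0 < r_* \le r^* < \iny$ with $r_* \le \abs{z} \le r^*$ for every $z \in Z_s$ with $s \in \pr{S_1, S_2}$. Since $s \mapsto s^{c_1}$ and $s \mapsto s^{c_4}$ are bounded above and below by positive constants on $\brac{S_1, S_2}$, the choices $c_5 := r_* / S_2^{c_1}$ and $c_6 := r^* / S_1^{c_4}$ yield $c_5 s^{c_1} \le r_* \le \abs{z} \le r^* \le c_6 s^{c_4}$ throughout the intermediate range. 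I expect the only real friction in writing this out to be bookkeeping: one has to verify that $S_1, S_2, c_1, \dots, c_6$ can be chosen to depend only on $\la$ and $\La$, which they do because $R_1, R_2, C_1, \dots, C_4$ already have that property by Lemma \ref{fundSolBds}.
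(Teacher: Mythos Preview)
Your approach is exactly the one the paper indicates: invert the logarithmic bounds on $G$ from Lemma~\ref{fundSolBds} (the paper gives no proof, only referring to \cite{DKW17}). The small-$s$ and large-$s$ regimes are handled correctly, and the choices $c_1 = 1/C_1$, $c_2 = 1/C_2$, $c_3 = 1/C_4$, $c_4 = 1/C_3$ are right.

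There is one small gap in the intermediate range. Your compactness argument produces $r_*, r^*$ from the preimage $G^{-1}\pr{\brac{\ln S_1, \ln S_2}}$, but this preimage depends on the particular fundamental solution $G$, hence on the particular operator $L$, not merely on $\la$ and $\La$. So the sentence at the end asserting that $c_5, c_6$ depend only on $\la, \La$ ``because $R_1, R_2, C_1, \dots, C_4$ already have that property'' is not justified by what you wrote. The fix is to replace the soft compactness step by an explicit case split: for $z \in Z_s$ with $S_1 < s < S_2$, either $\abs{z} < R_1$, in which case the near-origin bound gives $\abs{z} \ge s^{1/C_1} > S_1^{1/C_1}$, or $\abs{z} \ge R_1$; hence $\abs{z} \ge \min\pr{R_1, S_1^{1/C_1}}$. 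Symmetrically, either $\abs{z} > R_2$, giving $\abs{z} \le s^{1/C_3} < S_2^{1/C_3}$, or $\abs{z} \le R_2$; hence $\abs{z} \le \max\pr{R_2, S_2^{1/C_3}}$. These bounds are built only from $R_1, R_2, C_1, C_3, S_1, S_2$ and so depend only on $\la, \La$, after which your choice of $c_5, c_6$ goes through.
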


Thus, the quasi-circle $Z_s$ is contained in an annulus whose inner and outer radii depend on $s$, $\la$, and $\La$.
For future reference, it will be helpful to have a notation for the bounds on these inner and outer radii.

\begin{defn}
Define
\begin{align*}
& \si\pr{s; \la, \La} = \sup\set{ r > 0 : B_r \su \bigcap_{L \in \mathbf{L}\pr{\la, \La}} Q_s^L } \\
& \rho\pr{s; \la, \La} = \inf \set{r > 0 : \bigcup_{L \in \mathbf{L}\pr{\la, \La}} Q_s^L \su B_r }
\end{align*}
\end{defn}

\begin{rem}
These functions are defined so that for any operator $L$ in $\mathbf{L}\pr{\la, \La}$, $B_{\si\pr{s; \la, \La}} \su Q^L_s \su B_{\rho\pr{s;\la, \La}}$.
\end{rem}

The quasi-balls and quasi-circles just defined above are centered at the origin since $G$ is a fundamental solution with a pole at the origin.
As a reminder, we may sometimes use the notation $Z_s\pr{0}$ and $Q_s\pr{0}$.
If we follow the same process for any point $z_0 \in \R^2$, we may discuss the fundamental solutions with pole at $z_0$, and we may similarly define the quasi-circles and quasi-balls associated to these functions.
We denote the quasi-circle and quasi-ball of radius $s$ centred at $z_0$ by $Z_s\pr{z_0}$ and $Q_s\pr{z_0}$, respectively.
Although $Q_s\pr{z_0}$ is not necessarily a translation of $Q_s\pr{0}$ for $z_0 \ne 0$, both sets are contained in annuli that are translations.

\section{Positive multipliers}
\label{S3}

In \cite{KSW15} and \cite{DKW17}, the first step in the proofs of the order of vanishing estimates is to establish that a positive multiplier associated to the operator (or its adjoint) exists and has suitable bounds.
Unlike the settings in those papers, since our lower order terms are unbounded, we cannot simply construct positive super- and subsolutions in $B_d$, then argue that a positive solution exists.
Therefore, our approach here is more involved.
Instead, we use solutions to the Dirichlet boundary value problem for constant boundary data and rely on the maximum principle and Green's function representations to give us desirable bounds.

From now on, we set
\begin{equation}
\label{drho}
d = \rho\pr{7/5} + 2/5,
\end{equation}
where $\rho\pr{s} = \rho\pr{s; \la, \La}$ is as defined in the previous section. 
Throughout this section, assume that $A$ satisfies \eqref{ellip} and \eqref{ABd}, while $W_1 \in L^{q_1}\pr{B_d}$, $W_2 \in L^{q_2}\pr{B_d}$, and $V \in L^{p}\pr{B_d}$ for some $q_1, q_2 \in (2, \iny]$, $p \in (1, \iny]$. 
Note that $A^T$ also satisfies \eqref{ellip} and \eqref{ABd}. 
Associated to an operator of the type $\mathcal{L} := - \di\pr{A \gr + W_1} + W_2 \cdot \gr + V$ is the bilinear form $\mathcal{B} : W^{1,2}_0\pr{\Om} \times W^{1,2}_0\pr{\Om} \to \R$ given by
\begin{equation}
\mathcal B\brac{u, v} = \int_{\Om} A \gr u \cdot \gr v + W_1 u \cdot \gr v + W_2\cdot \gr u  \, v + V \, u \, v.
\label{biForm}
\end{equation}
In every case, we take $\Om = B_d$.

Since we need the existence of solutions to various elliptic equations, the following lemma serves as a useful tool.

\begin{lem}
\label{solvableLem}
Let $g \in C^1\pr{\overline{B_d}}$.
Assume that the bilinear form given by \eqref{biForm} is bounded and coercive in $W^{1,2}_0\pr{B_d}$.
That is, there exist constants $c$ and $C$ so that for any $u, v \in W^{1,2}_0\pr{B_d}$
\begin{align*}
&|\mathcal B\brac{u, v}| \le C \norm{u}_{W^{1,2}\pr{B_d}} \norm{v}_{W^{1,2}\pr{B_d}} \\
&\mathcal B\brac{v, v} \ge c \norm{v}_{W^{1,2}\pr{B_d}}^2.
\end{align*}
Then there exists a weak solution $\phi \in W^{1,2}\pr{B_d}$ to 
\begin{equation}
\left\{\begin{array}{rl} - \di \pr{A \gr \phi + W_1 \phi} + W_2 \cdot \gr \phi + V \phi = 0 & \text{ in }\; B_d \\ \phi = g & \text{ on }\; \del B_d \end{array} \right..
\label{BVP1}
\end{equation}
\end{lem}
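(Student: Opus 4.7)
The plan is to reduce to homogeneous Dirichlet data and then invoke the Lax--Milgram theorem. Since $g \in C^1\pr{\overline{B_d}}$, we have $g \in W^{1,2}\pr{B_d} \cap L^\iny\pr{B_d}$. Write the sought solution as $\phi = \psi + g$ with $\psi \in W^{1,2}_0\pr{B_d}$. Then \eqref{BVP1} is equivalent to finding $\psi \in W^{1,2}_0\pr{B_d}$ satisfying
\[
\mathcal B\brac{\psi, v} = - \mathcal B\brac{g, v} \quad \text{for all } v \in W^{1,2}_0\pr{B_d}.
\]

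The first step is to verify that $F\pr{v} := - \mathcal B\brac{g, v}$ defines a bounded linear functional on $W^{1,2}_0\pr{B_d}$. Writing
\[
F\pr{v} = - \int_{B_d} A \gr g \cdot \gr v + W_1 g \cdot \gr v + W_2 \cdot \gr g \; v + V g v,
\]
the first integral is bounded by $\La \norm{\gr g}_{L^\iny} \abs{B_d}^{1/2} \norm{\gr v}_{L^2}$. Since $q_1, q_2 \in \pr{2, \iny}$ and $p \in \pr{1, \iny}$, the remaining three terms are controlled by H\"older's inequality together with the Sobolev embedding $W^{1,2}_0\pr{B_d} \hookrightarrow L^r\pr{B_d}$ for every $r < \iny$; using also that $g, \gr g \in L^\iny$, one arrives at
\[
\abs{F\pr{v}} \le C\pr{\norm{g}_{C^1}, \norm{W_1}_{L^{q_1}}, \norm{W_2}_{L^{q_2}}, \norm{V}_{L^p}, \la, \La} \norm{v}_{W^{1,2}_0\pr{B_d}}.
\]
Linearity of $F$ is immediate, so $F$ belongs to the dual of $W^{1,2}_0\pr{B_d}$.

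Given the assumed boundedness and coercivity of $\mathcal B$ on $W^{1,2}_0\pr{B_d}$, the Lax--Milgram theorem supplies a unique $\psi \in W^{1,2}_0\pr{B_d}$ realizing the variational identity above. Setting $\phi := \psi + g \in W^{1,2}\pr{B_d}$ furnishes the desired weak solution: the interior equation is captured by $\mathcal B\brac{\phi, v} = 0$ for every $v \in W^{1,2}_0\pr{B_d}$, and because $\phi - g = \psi \in W^{1,2}_0\pr{B_d}$, the boundary condition $\phi = g$ on $\del B_d$ holds in the trace sense.

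The only mildly technical point is verifying the dual-space bound on $F$; this requires the integrability hypotheses on $W_1, W_2, V$ together with the Sobolev embedding, but it is routine. Once $F$ is identified as a bounded linear functional, the existence statement is a direct application of Lax--Milgram, so I do not anticipate any serious obstacle.
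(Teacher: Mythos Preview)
Your proof is correct and follows essentially the same approach as the paper: reduce to homogeneous boundary data via $\psi = \phi - g$, verify that the resulting right-hand side defines a bounded linear functional on $W^{1,2}_0\pr{B_d}$ using H\"older and Sobolev inequalities, and apply Lax--Milgram. The paper phrases the right-hand side as $-\di G + f$ with $G = -A\gr g - W_1 g$ and $f = -W_2\cdot\gr g - Vg$, but this is exactly your functional $-\mathcal B\brac{g,\cdot}$ written out.
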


\begin{proof}
To establish that a solution to \eqref{BVP1} exists, we prove that there exists a $\psi \in W^{1,2}_0\pr{B_d}$ for which
\begin{equation}
- \di \pr{A \gr \psi + W_1 \psi} + W_2 \cdot \gr \psi+ V \psi  = -\di G + f \;\text{ in }\; B_d,
\label{BVP*}
\end{equation}
where $G \in L^{q}\pr{B_d}$ and $f \in L^p\pr{B_d}$ for some $q \in \pb{2, \iny}$ and $p \in \pb{1, \iny}$.
With $\psi = \phi - g$, we have $G = -A \gr g - W_1 g$ and $f = - W_2 \cdot \gr g - V g$, and this gives the claimed result since $g \in C^1\pr{\overline{B_d}}$ and $B_d$ is bounded.

To show that \eqref{BVP*} is solvable, we need to show that for any $v \in W^{1,2}_0\pr{B_d}$, there exists a $\psi \in W^{1,2}_0\pr{B_d}$ for which
\begin{equation}
\mathcal B\brac{\psi, v} = \int_{B_d} G \cdot \gr v + f \, v.
\label{weakForm}
\end{equation}
For any $v \in W^{1,2}_0\pr{B_d}$, consider the linear functional
\begin{equation}
v \mapsto \int_{B_d} G \cdot \gr v + f \, v.
\label{functional}
\end{equation}
If $p \ge 2$, then
\begin{align*}
|\int_{B_d} G \cdot \gr v + f \, v|
&\le \norm{G}_{L^{q}\pr{B_d}} \norm{\gr v}_{L^2\pr{B_d}} \abs{B_d}^{\frac 1 2 - \frac 1 {q}} + \norm{f}_{L^p\pr{B_d}} \norm{v}_{L^{2}\pr{B_d}} \abs{B_d}^{\frac 1 2 - \frac 1 p} .
\end{align*}
On the other hand, if $p < 2$, then $p' > 2$ and
\begin{align*}
|\int_{B_d} G \cdot \gr v + f \, v|
&\le \norm{G}_{L^{q}\pr{B_d}} \norm{\gr v}_{L^2\pr{B_d}} \abs{B_d}^{\frac 1 2 - \frac 1 {q}} + \norm{f}_{L^p\pr{B_d}} \norm{v}_{L^{p'}\pr{B_d}} \\
&\le \pr{\norm{G}_{L^{q}\pr{B_d}} \abs{B_d}^{\frac 1 2 - \frac 1 {q}} 
+ C_p \norm{f}_{L^p\pr{B_d}}} \norm{\gr v}_{L^{2}\pr{B_d}},
\end{align*}
where the last line follows from an application of the Sobolev inequality with $2^* = p' \in \pr{2, \iny}$. 
Hereafter, we use the notation $2^*$ to denote the Sobolev exponent of $2$, and it will be chosen in $\pr{2, \iny}$.
In either case, 
$$|\int_{B_d} G \cdot \gr v + f v| \le C \norm{v}_{W^{1,2}\pr{B_d}},$$
so the functional defined by \eqref{functional} is bounded on $W^{1,2}_0\pr{B_d}$.

By assumption, $\mathcal B\brac{\cdot, \cdot}$ is a bounded, coercive form on $W^{1,2}_0\pr{B_d}$.
Therefore, we may apply the Lax-Milgram theorem to conclude that there exists a unique $\psi \in W^{1,2}_0\pr{B_d}$ that satisfies \eqref{weakForm}.
Consequently, \eqref{BVP*} has a unique solution, and therefore, \eqref{BVP1} is solvable.
\end{proof}

Using the lemma above, we now prove that a general positive multiplier exists.
With an appropriate choice of boundary data, we show that this positive multiplier has the required pointwise bounds from above and below.

\begin{lem}
\label{phiLem}
Assume that conditions \eqref{ellip}, \eqref{ABd}, \eqref{pos1}, and \eqref{pos2} hold and that $\norm{W_1}_{L^{q_1}\pr{B_d}} \le K$, $\norm{W_2}_{L^{q_2}\pr{B_d}} \le \min\set{\frac \la {2 c_{q_2}}, \frac{1}{3C_{q_2}}}$, and $\norm{V}_{L^p\pr{B_d}} \le \frac{1}{3C_p}$, where $c_{q_2}$, $C_{q_2}$ and $C_p$ will be specified below.
Then there exists a weak solution $\phi \in W^{1,2}\pr{B_d}$ to 
\begin{equation}
- \di \pr{A^T \gr \phi + W_2 \phi} + W_1 \cdot \gr \phi + V \phi = 0 \quad \text{ in } B_d
\label{epde4}
\end{equation}
with the property that
\begin{equation}
\frac 1 3 \le \phi\pr{z} \le 1 \quad \text{ for a.e. } z \in B_d.
\label{phiBound}
\end{equation}
\end{lem}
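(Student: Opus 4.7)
The approach is to produce $\phi$ as the solution of the Dirichlet problem for \eqref{epde4} on $B_d$ with constant boundary data $g \equiv 1$, by applying Lemma \ref{solvableLem} to the adjoint bilinear form
\[
\mathcal B[\psi,v] = \int_{B_d} A^T \gr \psi \cdot \gr v + W_2 \psi \cdot \gr v + \pr{W_1 \cdot \gr \psi} v + V \psi v.
\]
Boundedness of $\mathcal B$ on $W^{1,2}_0(B_d)$ is routine via H\"older and the two-dimensional embedding $W^{1,2}_0(B_d) \hookrightarrow L^r(B_d)$ for all finite $r$. For coercivity the plan is to rewrite the first-order terms using $2v\,\gr v = \gr(v^2)$ and invoke the positivity hypotheses directly: condition \eqref{pos1}, applied to the admissible non-negative test $v^2 \in W^{1,q_1'}(B_d)$, gives $\int (W_1 \cdot \gr v)\, v = \tfrac{1}{2}\int W_1 \cdot \gr(v^2) \ge 0$; condition \eqref{pos2}, applied to $v^2 \in W^{1,q_2'}(B_d) \cap L^{p'}(B_d)$, gives $2\int W_2 v \cdot \gr v + V v^2 \ge 0$. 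Combining these with ellipticity leaves
\[
\mathcal B[v,v] \ge \la \norm{\gr v}^2_{L^2(B_d)} - \left|\int W_2 v \cdot \gr v\right|,
\]
and a single H\"older/Sobolev bound controls the leftover by $c_{q_2}\norm{W_2}_{L^{q_2}}\norm{\gr v}^2_{L^2}$, which is absorbed thanks to $\norm{W_2}_{L^{q_2}} \le \la/(2c_{q_2})$. Lemma \ref{solvableLem} with $g \equiv 1$ then yields a unique $\phi \in W^{1,2}(B_d)$ with $\phi - 1 \in W^{1,2}_0(B_d)$.

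For the upper bound $\phi \le 1$, I will test \eqref{epde4} against $(\phi-1)_+ \in W^{1,2}_0(B_d)$. Since $\gr(\phi-1)_+$ is supported on $\{\phi > 1\}$, where $\phi = 1 + (\phi-1)_+$, the weak formulation rearranges to
\[
\mathcal B\brac{(\phi-1)_+,\,(\phi-1)_+} = -\int_{B_d} W_2 \cdot \gr(\phi-1)_+ + V\,(\phi-1)_+,
\]
whose right-hand side is non-positive by \eqref{pos2}. Coercivity then forces $(\phi-1)_+ \equiv 0$.

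For the lower bound $\phi \ge 1/3$, I will set $u := 1 - \phi \in W^{1,2}_0(B_d)$, note that it weakly solves $\mathcal L^T u = -\di W_2 + V$ on $B_d$, and invoke the Green's function $G(z,w)$ for $\mathcal L^T$ on $B_d$ constructed in Appendix \ref{AppB}. Integration by parts in $w$ furnishes the representation
\[
u(z) = \int_{B_d} \gr_w G(z,w)\cdot W_2(w)\, dw + \int_{B_d} G(z,w)\, V(w)\, dw.
\]
H\"older's inequality, combined with the uniform-in-$z$ estimates $\sup_{z \in B_d}\norm{\gr_w G(z,\cdot)}_{L^{q_2'}(B_d)} \le C_{q_2}$ and $\sup_{z \in B_d}\norm{G(z,\cdot)}_{L^{p'}(B_d)} \le C_p$, which are finite because $q_2' < 2$ and $p' < \iny$ (reflecting the pointwise profiles $G \sim \log|z-w|$ and $|\gr_w G| \sim |z-w|^{-1}$ near the pole), then gives $|u(z)| \le C_{q_2}\norm{W_2}_{L^{q_2}} + C_p\norm{V}_{L^p} \le 1/3 + 1/3$ by the smallness hypotheses. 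Hence $\phi(z) \ge 1/3$ almost everywhere.

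The main obstacle I foresee is the Green's function input used in the last step: establishing the existence of $G$ for the non-symmetric, first-order-perturbed operator $\mathcal L^T$ on $B_d$ together with the uniform $L^{q_2'}$ and $L^{p'}$ bounds on $\gr_w G(z,\cdot)$ and $G(z,\cdot)$ is precisely the purpose of Appendix \ref{AppB}, built on the constructions of \cite{GW82,HK07,DHM16}. The rest of the argument is a relatively standard interplay of energy estimates and test-function manipulations; the key structural insight to get right is the consistent use of \eqref{pos1} and \eqref{pos2} together with $2v\,\gr v = \gr(v^2)$ to neutralise the potentially large $W_1$ contribution in both the coercivity estimate and the maximum-principle test for the upper bound.
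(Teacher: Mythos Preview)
Your existence and coercivity steps match the paper's argument exactly, and your test-function proof of $\phi \le 1$ via $(\phi-1)_+$ is a clean direct version of what the paper obtains by citing its maximum principle (Theorem~\ref{maxPrinc}); both amount to the same computation.

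The gap is in the lower bound. You invoke ``the Green's function $G(z,w)$ for $\mathcal L^T$ on $B_d$ constructed in Appendix~\ref{AppB}'', but Appendix~\ref{AppB} does \emph{not} construct a Green's function for the full operator $\mathcal L^T = -\di(A^T\gr + W_2) + W_1\cdot\gr + V$. It only treats operators of the form $\widetilde L = -\di(A\gr) + W\cdot\gr$ (one drift, no zeroth-order term), under the single positivity hypothesis on $W$. So the representation $u(z) = \int \gr_w G\cdot W_2 + \int G\,V$ that you want is not available from the appendix as written, and the constants $C_{q_2}, C_p$ you describe are attached to an object the paper has not built.

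The fix is exactly the paper's move: write $w = \phi - 1$ and rearrange \eqref{epde4} as
\[
-\di(A^T\gr w) + W_1\cdot\gr w \;=\; \di(W_2\phi) - V\phi \qquad \text{in } B_d,
\]
so that the left-hand operator \emph{is} of the Appendix~\ref{AppB} type (its Green's function is $\Gamma^*(z,\zeta) = \Gamma(\zeta,z)$ for $\Gamma$ the Green's function of $-\di(A\gr + W_1)$, whose existence uses precisely \eqref{pos1}). The resulting representation carries a factor of $\phi$ on the right, so the estimate becomes
\[
\|\phi - 1\|_{L^\infty} \;\le\; \bigl(C_{q_2}\|W_2\|_{L^{q_2}} + C_p\|V\|_{L^p}\bigr)\,\|\phi\|_{L^\infty}.
\]
You therefore also need $\|\phi\|_{L^\infty} \le 1$, not just $\phi \le 1$; this follows from the same test-function idea applied to $\phi_- = (-\phi)_+ \in W^{1,2}_0(B_d)$, which gives $\mathcal B[\phi_-,\phi_-] = 0$ and hence $\phi \ge 0$. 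With $\|\phi\|_{L^\infty} \le 1$ in hand, the smallness hypotheses yield $\|\phi-1\|_{L^\infty} \le 2/3$, and \eqref{phiBound} follows.
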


\begin{proof}[Proof of Lemma \ref{phiLem}]
Let $\phi \in W^{1,2}\pr{B_d}$ be the weak solution to the following Dirichlet boundary value problem with constant boundary data
\begin{equation}
\left\{ \begin{array}{rl}- \di \pr{A^T \gr \phi + W_2 \phi} + W_1 \cdot \gr \phi + V \phi  = 0 & \text{ in } B_d \\ \phi = 1  & \text{ on } \del B_d . \end{array}\right.
\label{BVP2}
\end{equation}
To establish that a solution to \eqref{BVP2} exists, we need to check that the associated bilinear form is bounded above and below, then we may apply Lemma \ref{solvableLem} (with $A$ being replaced by $A^T$ and the roles of $W_1$ and $W_2$ interchanged).
For any $u, v \in W^{1,2}_0\pr{B_d}$, \eqref{ABd} and H\"older's inequality imply that
\begin{align*}
|\mathcal B^* \brac{u, v}| 
&= \abs{\int_{B_d} A^T \gr u \cdot \gr v + W_2 \, u \cdot \gr v + W_1 \cdot \gr u \, v + V \, u \, v} \\
&\le \La \norm{\gr u}_{L^2\pr{B_d}} \norm{\gr v}_{L^2\pr{B_d}} 
+ \norm{W_1}_{L^{q_1}\pr{B_d}} \norm{\gr u}_{L^2\pr{B_d}} \norm{v}_{L^{\frac{2q_1}{q_1-2}}\pr{B_d}} \\
&+ \norm{W_2}_{L^{q_2}\pr{B_d}} \norm{\gr v}_{L^2\pr{B_d}} \norm{u}_{L^{\frac{2q_2}{q_2-2}}\pr{B_d}}  
+ \norm{V}_{L^p\pr{B_d}} \norm{u}_{L^{\frac{2p}{p-1}}\pr{B_d}} \norm{v}_{L^{\frac{2p}{p-1}}\pr{B_d}} \\
&\le \pr{\La + c_{q_1} \norm{W_1}_{L^{q_1}\pr{B_d}} + c_{q_2} \norm{W_2}_{L^{q_2}\pr{B_d}} + c_p \norm{V}_{L^p\pr{B_d}}} \norm{\gr u}_{L^2\pr{B_d}} \norm{\gr v}_{L^2\pr{B_d}},
\end{align*}
where we have used the Sobolev inequality three times with $2^* = \frac{2q_i}{q_i -2} \in \pr{2, \iny}$ for $i = 1, 2$ and $2^* = \frac{2p}{p-1} \in \pr{2, \iny}$ to reach the last line.
Therefore,
$$\abs{\mathcal B^* \brac{u,v}} \le C \norm{u}_{W^{1,2}\pr{B_d}} \norm{v}_{W^{1,2}\pr{B_d}}.$$
For any $v \in W^{1,2}_0\pr{B_d}$, we see that
\begin{align*}
\mathcal B^* \brac{v, v}
&= \int_{B_d} A^T \gr v \cdot \gr v + W_2 \, v \cdot \gr v + W_1 \cdot \gr v \, v + V \, \abs{v}^2 \\
&= \int_{B_d} A \gr v \cdot \gr v 
+ \frac 1 2 \int_{B_d}  W_1 \cdot \gr\pr{ v^2} 
+ \int_{B_d} W_2 \cdot \gr\pr{ v^2} + V \, \abs{v}^2 
- \int_{B_d}  W_2 \cdot \gr v \, v \\
&\ge \la  \norm{\gr v}_{L^2\pr{B_d}}^2
- \int_{B_d}  W_2 \cdot \gr v \, v ,
\end{align*}
where we have used conditions \eqref{ellip}, \eqref{pos1}, and \eqref{pos2}.
As shown above, 
$$\int_{B_d}  W_2 \cdot \gr v \, v \le c_{q_2} \norm{W_2}_{L^{q_2}\pr{B_d}} \norm{\gr v}_{L^2\pr{B_d}}^2 \le \frac \la 2 \norm{\gr v}_{L^2\pr{B_d}}^2,$$
where the second inequality follows from the assumption that $\norm{W_2}_{L^{q_2}\pr{B_d}} \le \frac{\la}{2 c_{q_2}}$.
As a result, $\mathcal B^*\brac{v, v} \ge \frac \la 2 \int_{B_d} \abs{\gr v}^2$.
The Poincar\'e inequality immediately implies that for any $v \in W^{1,2}_0\pr{B_d}$, $\mathcal B^*\brac{v, v} \ge c \norm{v}_{W^{1,2}\pr{B_d}}^2$.
In conclusion, $\mathcal B\brac{\cdot, \cdot}$ is a bounded, coercive form on $W^{1,2}_0\pr{B_d}$.
It follows from Lemma \ref{solvableLem} that \eqref{BVP2} is solvable.

It remains to show that $\phi$ satisfies the stated pointwise bounds a.e.
To this end, set $w\pr{z} = \phi\pr{z} - 1 \in W^{1,2}_0\pr{B_d}$ and note that
\begin{equation}
\left\{ \begin{array}{rll}
- \di \pr{A^T \gr w } + W_1 \cdot \gr w 
&= \di\pr{W_2 \phi} -V \phi & \text{ in } B_d \\ 
w &= 0  & \text{ on } \del B_d \end{array}\right..
\label{BVP4}
\end{equation}
Let $\Ga\pr{z, \zeta}$ denote the Green's function for the operator $ -\di\pr{A \gr+W_1}$ in $B_d$, and let $\Gamma^\ast(z,\zeta)$ denote the Green's function for the adjoint operator $-\di\pr{A^T\nabla}+W_1\cdot\nabla$ in $B_d$. 
Note that $\Gamma^\ast(\zeta,z)=\Gamma(z,\zeta)$. 
The conditions \eqref{ellip} and \eqref{pos1} ensure that such a Green's function exists (see Appendix \ref{AppB}). 
According to Theorem \ref{t3.2} in Appendix \ref{AppB} (see also Definition \ref{d3.1}), \eqref{BVP4} implies that
$$w\pr{z} = -\int_{B_d} \brac{D_\zeta\Ga\pr{z,\zeta} \cdot W_2\pr{\zeta} + \Ga\pr{z,\zeta} V\pr{\zeta}} \phi\pr{\zeta} d\zeta.$$
Therefore, 
\begin{align*}
\phi\pr{z} - 1 &= - \int_{B_d} \brac{D_\zeta\Ga\pr{z,\zeta} \cdot W_2\pr{\zeta} + \Ga\pr{z,\zeta} V\pr{\zeta}} \phi\pr{\zeta} d\zeta
\end{align*}
so that
\begin{align*}
\sup_{z \in B_d} \abs{\phi\pr{z} - 1}
&\le \sup_{z \in B_d}\brac{\int_{B_d} \abs{D_\zeta \Ga\pr{z,\zeta}} \abs{W_2\pr{\zeta}} d\zeta + \int_{B_d} \abs{\Ga\pr{z,\zeta}} \abs{ V\pr{\zeta}} d\zeta } \norm{\phi}_{L^\iny\pr{B_d}} \\
&\le \sup_{z \in B_d}\brac{\int_{B_d} \abs{D_\zeta \Ga\pr{z,\zeta}} \abs{W_2\pr{\zeta}} d\zeta + \int_{B_d} \abs{\Ga\pr{z,\zeta}} \abs{ V\pr{\zeta}} d\zeta },
\end{align*}
where we have used the maximum principle from Theorem \ref{maxPrinc} to conclude that $\norm{\phi}_{L^\iny\pr{B_d}} \le 1$.
By H\"older's inequality, we have
\begin{align*}
\int_{B_d} \abs{\gr_\zeta \Ga\pr{z,\zeta}} \abs{W_2\pr{\zeta}}d\zeta
&\le \pr{ \int_{B_d} \abs{D\Ga\pr{z,\zeta}}^{q_2^\prime} d\zeta}^{\frac 1 {q_2^\prime}}\pr{ \int_{B_d} \abs{ W_2\pr{\zeta}}^{q_2} d\zeta}^{\frac 1 {q_2}} \\
&\le \norm{W_2}_{L^{q_2}\pr{B_d}} \norm{D\Ga\pr{z,\cdot}}_{L^{q_2^\prime}\pr{B_{2d}\pr{z}\cap B_d}}
\end{align*}
and
\begin{align*}
\int_{B_d} \abs{\Ga\pr{z,\zeta}} \abs{ V\pr{\zeta}} d\zeta
\le  \norm{V}_{L^{p}\pr{B_d}}  \norm{\Ga\pr{z,\cdot}}_{L^{p^\prime}\pr{B_{2d}\pr{z} \cap B_d}}.
\end{align*}
Since $q_2 > 2$, then $q_2^\prime \in \brp{1, 2}$ and \eqref{eqB.18} implies that $\disp \sup_{z \in B_d}\norm{D\Ga\pr{z,\cdot}}_{L^{q_2^\prime}\pr{B_{2d}\pr{z}\cap B_d}} = C_{q_2}$.
Similarly, since $p > 1$, then $p^\prime \in \brp{1, \iny}$, so it follows from \eqref{eqB.17} in Theorem \ref{t3.2} that $\disp \sup_{z \in B_d}\norm{\Ga\pr{z,\cdot}}_{L^{p^\prime}\pr{B_{2d}\pr{z}\cap B_d}} = C_{p}$.
Combining the observations above with the assumed bounds on $\norm{W_2}_{L^{q_2}\pr{B_d}}$ and $\norm{V}_{L^{p}\pr{B_d}}$, we see that
\begin{align*}
\norm{\phi - 1}_{L^\iny\pr{B_d}} \le \frac 2 3,
\end{align*}
and conclusion of the lemma follows.
\end{proof}

In addition to the pointwise bounds for $\phi$ that were established in the previous lemmas, we also prove gradient estimates for all solutions to \eqref{epde}. 
A similar argument shows that analogous bounds hold for $\phi$, and hence for all solutions to equations of the form \eqref{epde4}.
The following estimates follow from a standard integration by parts argument (a Caccioppoli estimate) in combination with Theorem 2 from \cite{Mey63}.

\begin{lem}
For any $r > 0$ and $\al > 1$ for which $\al r < d$, let $v$ be a weak solution to \eqref{epde} in $B_{\al r}$.
Assume that $\norm{W_1}_{L^{q_1}\pr{B_{\al r}}} \le K_1$, $\norm{W_2}_{L^{q_2}\pr{B_{\al r}}} \le K_2$, and $\norm{V}_{L^p\pr{B_{\al r}}} \le M$.
Then for any $t \in \brac{2, \tau_0}$,
\begin{align}
\pr{ \int_{B_{r}} \abs{\gr v}^{t}}^{\frac 1 {t}}
&\le C r^{\frac 2 {t} -1}  \pr{1 + r^{2 - \frac {4}{q_1} }K_1^2 + r^{2 - \frac {4}{q_2} }K_2^2 + r^{2 - \frac 2 p } M } \norm{v}_{L^\iny\pr{B_{\al r}}},
\label{Cacc+Est}
\end{align}
where $C$ depends on $\la$, $\La$, $q_1$, $q_2$, $p$, $\al$, and $t$, and
\begin{equation}\label{t0}
\tau_0=\left\{\begin{aligned}
&\min\set{q_1,q_2,\frac{2p}{2-p}}\quad\mbox{if}\quad 1<p<2,\\
&\min\set{q_1,q_2} \quad\mbox{if}\quad p\ge 2.
\end{aligned}\right.
\end{equation}
\label{Cacc+}
\end{lem}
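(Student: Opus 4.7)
The proof naturally splits into two steps: a Caccioppoli-type $L^2$ estimate yielding the case $t=2$ of \eqref{Cacc+Est}, followed by an upgrade to $L^t$ for $t\in\pb{2,\tau_0}$ via Meyers' higher-integrability theorem.

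For Step 1, I would choose a cutoff $\eta\in C_c^\iny\pr{B_{\al r}}$ with $\eta\equiv 1$ on $B_r$, $0\le\eta\le 1$, and $\norm{\gr\eta}_{L^\iny}\le C/\brac{\pr{\al-1}r}$, and test the weak formulation of \eqref{epde} against $\phi=\eta^2 v$. After expanding $\gr(\eta^2 v)=\eta^2\gr v+2\eta v\gr\eta$ and invoking ellipticity \eqref{ellip}, the main term on the left is $\la\int\eta^2\abs{\gr v}^2$. The cross term $\int 2\eta v\pr{A\gr v}\cdot\gr\eta$ is controlled by Young's inequality together with $\norm{v}_{L^\iny}^2\cdot\abs{B_{\al r}}/r^2\lesssim\norm{v}_{L^\iny}^2$. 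For the three lower-order contributions I would apply H\"older in the triples $(L^{q_i},L^2,L^{q_i'})$ for the $W_i$ terms and $(L^p,L^{p'},L^{p'})$ for the $V$ term; after a further application of Young's inequality, a small multiple of $\int\eta^2\abs{\gr v}^2$ is absorbed on the left, while the remaining integrals of $v^2$ are dominated by $\norm{v}_{L^\iny}^2$ times the volume factors $\abs{B_{\al r}}^{1-2/q_i}\lesssim r^{2-4/q_i}$ and $\abs{B_{\al r}}^{1-1/p}\lesssim r^{2-2/p}$. Dominating the first-order cross contributions $K_i r^{1-2/q_i}$ and $M^{1/2}r^{1-1/p}$ by $1+K_i^2 r^{2-4/q_i}$ and $1+M r^{2-2/p}$ via AM--GM then produces \eqref{Cacc+Est} at $t=2$, the factor $r^{2/t-1}$ being trivial in that case.

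For Step 2, I would rewrite the equation as
\[
-\di\pr{A\gr v}=\di F+f,\qquad F:=-W_1 v,\quad f:=-W_2\cdot\gr v-Vv,
\]
so that $F\in L^{q_1}$ with $\norm{F}_{L^{q_1}}\le K_1\norm{v}_{L^\iny}$. To feed the non-divergence source $f$ into Meyers' theorem I would convert $f$ to divergence form by solving $\LP\psi=f$ in $B_{\al r}$ with vanishing boundary data: Hardy--Littlewood--Sobolev places $\gr\psi\in L^{2p/(2-p)}$ for the $Vv$ contribution when $p<2$ (and in a better space when $p\ge 2$), while the Step~1 bound $\gr v\in L^2$ combined with H\"older puts $W_2\cdot\gr v$ in $L^{2q_2/(q_2+2)}$, whose $2$D Sobolev conjugate is exactly $q_2$, so the corresponding piece $\gr\psi_W$ lies in $L^{q_2}$. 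Applying Theorem~2 of \cite{Mey63} to $-\di\pr{A\gr v}=\di\pr{F+\gr\psi}$ then yields $\gr v\in L^t_{\loc}$ up to $t=\tau_0$ as in \eqref{t0}, with the $r$-scaling $r^{2/t-1}$ dictated by the usual rescaling of the Meyers estimate and the data norms producing exactly the factors $K_1^2 r^{2-4/q_1}$, $K_2^2 r^{2-4/q_2}$, and $M r^{2-2/p}$.

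The main obstacle is the careful bookkeeping for the non-divergence term $W_2\cdot\gr v$: because $\gr v$ itself is the object under estimation, its appearance as a coefficient of the source forces the use of the Step~1 bound as a starting point, and one must verify that the auxiliary Poisson representation just described produces divergence-form data in exactly $L^{\min\set{q_1,q_2}}$ without a further bootstrap loss. The secondary delicate point is the regime $p\in\pr{1,2}$, where the Sobolev threshold $2p/(2-p)$ becomes binding and furnishes the extra entry in the minimum defining $\tau_0$ in \eqref{t0}; direct H\"older on $Vv$ (instead of the Poisson representation) would miss this optimal exponent.
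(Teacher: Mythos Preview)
Your two-step outline is correct and matches the paper's approach: Caccioppoli for $t=2$, then Meyers' theorem \cite{Mey63} for the upgrade. The only substantive difference is in Step~2. The paper applies Theorem~2 of \cite{Mey63} directly to
\[
\di\pr{A\gr v}=-\di\pr{W_1 v}+W_2\cdot\gr v+Vv,
\]
feeding in $W_1 v\in L^{\tau_0}$, $W_2\cdot\gr v\in L^{2q_2/(q_2+2)}$ (via H\"older and the Step~1 bound on $\gr v$), and $Vv\in L^p$ as separate source terms; Meyers' theorem already incorporates the Sobolev gain for the non-divergence data, so the threshold $2p/(2-p)$ emerges without any auxiliary Poisson solve. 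Your detour through $\LP\psi=f$ is a legitimate alternative that makes this Sobolev step explicit, and your computation of the resulting exponents ($q_2$ for the $W_2$ piece, $2p/(2-p)$ for the $V$ piece) is correct. Your closing remark that ``direct H\"older on $Vv$ \dots\ would miss this optimal exponent'' is a slight misreading of the alternative: the paper's direct route is not a naive H\"older bound but the full Meyers estimate, which does recover the optimal $\tau_0$. Either path works; the paper's is a bit shorter because it avoids constructing $\psi$.
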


\begin{proof}
We start with a Caccioppoli estimate, i.e. with $t = 2$.
Let $\eta \in C^\iny_c\pr{B_{\al r}}$ be such that $\eta \equiv 1$ in $B_r$ and $\abs{\gr \eta} \le \frac C {\pr{\al -1} r}$.
Take $v \eta^2$ to be the test function. 
Then
\begin{align*}
\int \pr{A \gr v  + W_1 v }\cdot \gr\pr{v \eta^2}  + \pr{W_2 \cdot \gr v +V v} v \eta^2 = 0
\end{align*}
so that with the use of \eqref{ellip}, H\"older and Cauchy inequalities, we have
\begin{align*}
&\la \int \abs{\gr v}^2 \eta^2
\le \int A \gr v \cdot \gr v \, \eta^2 \\
&= - 2 \int A \gr v \cdot \gr \eta \, v \eta 
- \int W_1 \cdot \gr v \, v \eta^2
- 2 \int W_1 \cdot \gr \eta \abs{v}^2 \eta
- \int \pr{W_2 \cdot \gr v +V v} v \eta^2  \\
&\le 2 \La \int \abs{\gr v} \abs{\gr \eta} \abs{v} \eta 
+ 2 \int \abs{W_1} \abs{\gr \eta} \abs{v}^2 \eta
+ \int \pr{\abs{W_1} + \abs{W_2}} \abs{v} \abs{\gr v} \eta^2 
+ \int \abs{V} v^2 \eta^2 \\
&\le \frac{\la}{2} \int \abs{\gr v}^2 \eta^2 
+ \brac{\pr{\frac{4 \La^2}{\la} + \frac \la 2} \int \abs{\gr \eta}^2 
+ \frac{2}{\la} \int \pr{2\abs{W_1}^2 + \abs{W_2}^2} \eta^2 
+ \int \abs{V} \eta^2}  \norm{v}_{L^\iny\pr{B_{\al r}}}^2 \\
&\le \frac{\la}{2} \int \abs{\gr v}^2 \eta^2 
+ C \brac{\pr{\frac{4 \La^2}{\la} + \frac \la 2} \frac{\al +1}{\al -1}+ \frac{4}{\la} K_1^2 \pr{ \al r}^{2 - \frac 4 {q_1}} + \frac{2}{\la} K_2^2 \pr{ \al r}^{2 - \frac 4 {q_2}}  + M \pr{\al r}^{2 - \frac 2 p} }  \norm{v}_{L^\iny\pr{B_{\al r}}}^2.
\end{align*}
After simplifying, we see that
\begin{align*}
\norm{\gr v}_{L^2\pr{B_{r}}}
&\le C \sqrt{1 + K_1^2 r^{2 - \frac 4 {q_1}} + K_2^2 r^{2 - \frac 4 {q_2}}  + M r^{2 - \frac 2 p} }  \norm{v}_{L^\iny\pr{B_{\al r}}}.
\end{align*}

Let $\be = \frac 1 2 \pr{\al + 1}$.
Since $\di \pr{A \gr v} = - \di\pr{W_1 v} + W_2 \cdot \gr v + V v$, then an application of Theorem 2 from \cite{Mey63} shows that for $\tau_0$ given in \eqref{t0},
\begin{align*}
\pr{\int_{B_{r}} \abs{\gr v}^{\tau_0}}^{\frac 1 {\tau_0}} 
&\le C\pr{r^{\frac 2 {\tau_0} -1} \norm{\gr v}_{L^2\pr{B_{\be r}}}
+ r^{\frac 2 {\tau_0}- 1} \norm{v}_{L^\iny\pr{B_{\be r}}}} \\
&+ C \pr{\norm{W_1 v}_{L^{\tau_0}\pr{B_{\be r}}}
+ r^{\frac 2 {\tau_0} - \frac {q_2+2}{q_2} + 1} \norm{W_2 \cdot \gr v}_{L^{\frac{2q_2}{q_2+2}}\pr{B_{\be r}}}
+r^{\frac 2 {\tau_0} - \frac 2 p + 1} \norm{V v}_{L^p\pr{B_{\be r}}}} \\
&\le C r^{\frac 2 {\tau_0} -1} \brac{ \pr{1 + r^{1 - \frac {2}{q_2} }K_2 } \norm{\gr v}_{L^{2}\pr{B_{\be r}}} 
+ \pr{1 + r^{1 - \frac 2 {q_1}} K_1 + r^{2 - \frac 2 p } M } \norm{v}_{L^\iny\pr{B_{\al r}}} } \\
&\le C r^{\frac 2 {\tau_0} -1}  \pr{1 + r^{2 - \frac {4}{q_1} }K_1^2 + r^{2 - \frac {4}{q_2} }K_2^2 + r^{2 - \frac 2 p } M  } \norm{v}_{L^\iny\pr{B_{\al r}}}, 
\end{align*}
where we have used H\"older's inequality, the bounds on $W_1$, $W_2$, and $V$, and the result above for $t = 2$.
H\"older's inequality leads to the conclusion for general $t \in (2, \tau_0)$.
\end{proof}

For the positive function $\phi$ given in Lemma \ref{phiLem}, define $\Phi = \log \phi$.
From Lemma \ref{phiLem}, it is clear that $\abs{\Phi\pr{z}} \le c$ for a.e. $z \in B_d$.
Since $\di \pr{A^T \gr \phi } = W_1 \cdot \gr \phi - \di \pr{W_2 \phi} + V \phi$ weakly in $B_d$, then
\begin{equation}
\di\pr{A^T \gr \Phi} + \pr{W_2 - W_1} \cdot \gr \Phi + A^T \gr \Phi \cdot \gr \Phi  = - \di W_2 + V  \quad \text{ weakly in } B_d.
\label{phiPDE}
\end{equation}
The following estimates for $\gr \Phi$ will be crucial to our proofs.
We begin with an $L^2$-estimate for $\gr \Phi$.

\begin{lem}
Let $\Phi = \log \phi$, where $\phi$ is the positive multiplier given in Lemma \ref{phiLem}.
Then $$\norm{\gr \Phi}_{L^2\pr{B_{\rho\pr{7/5}+1/5}}} \le C K,$$ where $C\pr{\la, \La, q_1, q_2, p}$.
\label{grPhiL2}
\end{lem}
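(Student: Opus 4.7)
The approach is a Caccioppoli-type energy estimate that exploits the presence of the quadratic term $A^T \gr \Phi \cdot \gr \Phi$ in \eqref{phiPDE}, which by ellipticity \eqref{ellip} dominates $\la \abs{\gr \Phi}^2$. Since the positive multiplier $\phi$ from Lemma \ref{phiLem} lies in $W^{1,2}(B_d)$ with $\phi \ge 1/3$, we have $\gr \Phi = \gr \phi / \phi \in L^2(B_d)$, so all the integrals below are finite and the integration by parts computations are legitimate.

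First, I would fix a smooth cutoff $\eta \in C^\infty_c(B_d)$ with $\eta \equiv 1$ on $B_{\rho(7/5)+1/5}$, $0 \le \eta \le 1$, and $\abs{\gr \eta} \le C$ (with $C$ depending only on the annular gap $d - (\rho(7/5) + 1/5) = 1/5$). Rewriting \eqref{phiPDE} as
$$A^T \gr \Phi \cdot \gr \Phi = -\di\pr{A^T \gr \Phi} + (W_1 - W_2) \cdot \gr \Phi - \di W_2 + V,$$
I would test against $\eta^2$ and integrate by parts on the two divergence terms to obtain
$$\int A^T \gr \Phi \cdot \gr \Phi \, \eta^2 = 2\int A^T \gr \Phi \cdot \gr \eta \, \eta + \int (W_1 - W_2) \cdot \gr \Phi \, \eta^2 + 2\int W_2 \cdot \gr \eta \, \eta + \int V \eta^2.$$

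Next, I would apply ellipticity on the left-hand side, bounding it below by $\la \int \abs{\gr \Phi}^2 \eta^2$, and Young's inequality on the three right-hand terms that contain $\gr \Phi$: the mixed term $2 A^T \gr \Phi \cdot \gr \eta \, \eta$ by $\tfrac{\la}{4} \abs{\gr \Phi}^2 \eta^2 + \tfrac{C \La^2}{\la}\abs{\gr \eta}^2$, and each $\abs{W_i \cdot \gr \Phi} \eta^2$ by $\tfrac{\la}{8} \abs{\gr \Phi}^2 \eta^2 + \tfrac{C}{\la}\abs{W_i}^2 \eta^2$ for $i = 1, 2$. The remaining terms $2 W_2 \cdot \gr \eta \, \eta$ (handled by Cauchy) and $\int V \eta^2$ do not involve $\gr \Phi$. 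Absorbing the three $\gr \Phi$-terms into the left-hand side yields
$$\int \abs{\gr \Phi}^2 \eta^2 \le C\brac{\int_{B_d} \abs{\gr \eta}^2 + \int_{B_d}\abs{W_1}^2 \eta^2 + \int_{B_d} \abs{W_2}^2 \eta^2 + \int_{B_d}\abs{V} \eta^2}.$$

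Finally, since $q_1, q_2 > 2$ and $p > 1$ with $B_d$ bounded, H\"older's inequality gives $\int \abs{W_i}^2 \eta^2 \le C \norm{W_i}_{L^{q_i}(B_d)}^2$ and $\int \abs{V} \eta^2 \le C \norm{V}_{L^p(B_d)}$. Inserting the hypotheses $\norm{W_1}_{L^{q_1}} \le K$ together with the absolute constant bounds on $\norm{W_2}_{L^{q_2}}$ and $\norm{V}_{L^p}$ from Lemma \ref{phiLem}, and using $K \ge 1$ (from Theorem \ref{OofV}) to absorb the constant-size contributions into $CK^2$, yields $\norm{\gr \Phi}_{L^2(B_{\rho(7/5)+1/5})} \le CK$. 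There is no serious obstacle here; the main point is simply that only $W_1$ contributes $K$-dependence on the right, while the other lower-order coefficients are forced to be of absolute constant size by the hypotheses under which the multiplier $\phi$ is built.
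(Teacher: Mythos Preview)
Your proposal is correct and follows essentially the same approach as the paper: both test the equation \eqref{phiPDE} against the square of a cutoff, use ellipticity to extract $\la\int\abs{\gr\Phi}^2\eta^2$ from the quadratic term, absorb the $\gr\Phi$-terms via Young's inequality, and finish with H\"older together with the fact that only $\norm{W_1}_{L^{q_1}}$ contributes a $K$. Your explicit remark that $\gr\Phi\in L^2(B_d)$ (from $\phi\in W^{1,2}$ and $\phi\ge 1/3$) justifies the energy computation is a nice touch the paper leaves implicit.
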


\begin{proof}
Recall that $d = \rho\pr{7/5}+2/5$.
Let $\te \in C^\iny_0\pr{B_d}$ be a cutoff function for which $\te \equiv 1$ in $B_{\rho\pr{7/5}+1/5}$.
Multiply \eqref{phiPDE} by $\te^2$, then integrate by parts to get
\begin{align*}
\la \int \abs{\gr \Phi}^2 \te^2 
&\le \int A^T \gr \Phi \cdot \gr \Phi \te^2 \\
&= \int  V \te^2 + 2 \int W_2 \cdot \te \gr \te + 2 \int A^T \gr \Phi  \te \gr \te - \int \pr{W_2 - W_1} \cdot \gr \Phi \te^2 \\
&\le \int  V \te^2 + 2 \int W_2 \cdot \te\gr \te + \frac \la 4 \int \abs{\gr \Phi}^2 \te^2 + \frac{4 \La^2}{\la} \int  \abs{\gr\te}^2 \\
&+ \frac \la 4 \int \abs{\gr \Phi}^2 \te^2 + \frac{1}{\la} \int \abs{W_2 - W_1}^2 \te^2.
\end{align*}
Rearranging and repeatedly applying H\"older's inequality, we see that
\begin{align*}
\frac \la 2 \int \abs{\gr \Phi}^2 \te 
&\le \frac 1 3 \norm{\te^2}_{L^{p^\prime}\pr{B_d}} + \frac 2 3 \norm{\gr \te}_{L^{q_2^\prime}\pr{B_d}} + \frac{4 \La^2}{\la} \norm{\gr \te}_{L^{2}\pr{B_d}}^2 \\
&+ \frac{2}{\la} \pr{\frac 2 3}^2 \norm{\te^2}_{L^{(q_2/2)'}\pr{B_d}}+ \frac{2}{\la} K^2 \norm{\te^2}_{L^{(q_1/2)'}\pr{B_d}}.
\end{align*}
Since $K \ge 1$, then
$$\int_{B_{\rho\pr{7/5}+1/5}} \abs{\gr \Phi}^2 \le C K^2,$$
where $C$ depends on $\la$, $\La$, $q_1$, $q_2$, and $p$, as required.
\end{proof}

Now we prove that $\gr \Phi$ belongs to $L^{t_0}$ for some $t_0 > 2$.

\begin{lem}
Let $\Phi = \log \phi$, where $\phi$ is the positive multiplier given in Lemma \ref{phiLem}.
Then there exists $t_0 > 2$ such that $\norm{\gr \Phi}_{L^{t_0}\pr{B_{\rho\pr{7/5}}}} \le C K^{\frac 2 \mu\pr{3 - \frac 2 {t_0}}}$, where $\mu = \min\set{2 - \frac 4 {q_1}, 2 - \frac 4 {q_2},2 - \frac 2 p}$ and $C$ depends on $\la$, $\La$, $q_1$, $q_2$, $p$, and $t_0$.
\label{grPhiLt0}
\end{lem}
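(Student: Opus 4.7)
The strategy is to reduce the estimate for $\gr\Phi$ to one for $\gr\phi$ via the pointwise lower bound $\phi\ge 1/3$ from Lemma~\ref{phiLem}, and then to apply a Caccioppoli--Meyers estimate of the type in Lemma~\ref{Cacc+} directly to $\phi$.

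\textbf{Step 1 (reduction).} By Lemma~\ref{phiLem}, $\phi\ge 1/3$ a.e.\ on $B_d$, so $\abs{\gr\Phi}=\abs{\gr\phi}/\phi\le 3\abs{\gr\phi}$ pointwise. It therefore suffices to establish the stated bound for $\norm{\gr\phi}_{L^{t_0}\pr{B_{\rho(7/5)}}}$.

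\textbf{Step 2 (Caccioppoli--Meyers for $\phi$).} The function $\phi$ solves \eqref{epde4}, which has the same divergence-form structure as \eqref{epde}, with $A$ replaced by $A^T$ and with the two drift terms $W_1,W_2$ interchanged. As noted in the remark preceding Lemma~\ref{Cacc+}, the analogous Caccioppoli--Meyers estimate therefore applies to $\phi$. I would take $r=\rho(7/5)$ and $\alpha=(r+1/5)/r$, so that $\alpha r=\rho(7/5)+1/5<d$, and invoke the hypotheses of Lemma~\ref{phiLem}: $\norm{W_2}_{L^{q_2}\pr{B_d}}$ and $\norm{V}_{L^p\pr{B_d}}$ are bounded by fixed constants, $\norm{W_1}_{L^{q_1}\pr{B_d}}\le K$, and $\norm{\phi}_{L^\infty\pr{B_d}}\le 1$. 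Since the radius $r=\rho(7/5)$ is fixed, its powers are absorbed into $C$, yielding, for any $t_0\in(2,\tau_0)$,
\begin{equation*}
\norm{\gr\phi}_{L^{t_0}\pr{B_{\rho(7/5)}}}\le C\pr{1+K^2}\le CK^2,
\end{equation*}
where $C$ depends on $\la,\La,q_1,q_2,p,t_0$.

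\textbf{Step 3 (comparison of exponents).} For $t_0>2$ we have $3-2/t_0>2$, and since $\mu=\min\set{2-4/q_1,2-4/q_2,2-2/p}\le 2$, it follows that $\frac{2}{\mu}\pr{3-\frac{2}{t_0}}\ge 2$. Combined with $K\ge 1$ this gives $K^2\le K^{2(3-2/t_0)/\mu}$, which together with Step~1 yields the claimed estimate.

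\textbf{Anticipated obstacle.} The main technical point is justifying that the Caccioppoli--Meyers estimate of Lemma~\ref{Cacc+} does carry over to $\phi$; this amounts to repeating the integration-by-parts and Meyers-integrability argument with $A$ replaced by $A^T$ and with the drift terms interchanged, which is routine but requires careful bookkeeping of Sobolev embeddings. A sharper estimate of the form $\norm{\gr\Phi}_{L^{t_0}}\le CK^{2/\mu}$ can be extracted via a scaling/covering argument: apply the Caccioppoli--Meyers estimate on balls of radius $r=K^{-2/\mu}$, at which $K^2 r^\mu = 1$, and cover $B_{\rho(7/5)}$ by $\sim K^{4/\mu}$ such balls; this sharper bound still implies the statement of the lemma.
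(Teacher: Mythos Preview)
Your proposal is correct and takes a genuinely different, more elementary route than the paper. The paper works directly with the nonlinear equation \eqref{phiPDE} for $\Phi$: it rescales to $\vp=\Phi/(CK)$, proves a Morrey-type decay estimate for $\int_{B_r}\abs{\gr\vp}^2$ (Claim~1), then rescales once more and invokes Giaquinta's reverse-H\"older higher-integrability theorem to obtain the $L^{t_0}$ bound, finishing with a covering by balls of radius $\sim K^{-2/\mu}$. By contrast, you sidestep the nonlinear equation entirely: the pointwise bound $\phi\ge 1/3$ from Lemma~\ref{phiLem} reduces the problem to $\norm{\gr\phi}_{L^{t_0}}$, and since $\phi$ solves the \emph{linear} equation \eqref{epde4}, Lemma~\ref{Cacc+} (with $A\to A^T$ and $W_1\leftrightarrow W_2$) applies directly on the fixed ball $B_{\rho(7/5)}$, yielding $\norm{\gr\phi}_{L^{t_0}}\le CK^2$ for any $t_0\in(2,\tau_0]$. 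Your exponent comparison in Step~3 is valid since $\mu\le 2<3-2/t_0$.

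Your approach is not only shorter but actually produces a stronger bound (exponent $2$ rather than $\tfrac{2}{\mu}(3-2/t_0)$), which after interpolation with Lemma~\ref{grPhiL2} still yields Corollary~\ref{PhiepsBound}. The paper's approach, on the other hand, is self-contained at the level of $\Phi$ and would survive in settings where one does not have a uniform positive lower bound on $\phi$; that robustness is the trade-off it buys.
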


\begin{proof}
We rescale equation \eqref{phiPDE}.
Set $\vp = \frac{\Phi}{C K}$ for some $C > 0$.
Then \eqref{phiPDE} is equivalent to 
\begin{equation}
\eps \di\pr{A^T \gr\vp + \widetilde W_2} + A^T \gr \vp \cdot \gr \vp = \pr{\widetilde W_1 - \widetilde W_2} \cdot \gr \vp + \widetilde V,
\label{vpPDE}
\end{equation}
where $\eps = \frac{1}{C K}$, $\widetilde W_i = \frac{W_i}{C K}$ for $i = 1, 2$, and $\widetilde V = \frac{V}{C^2 K^2}$.
We'll choose $C$ sufficiently large so that
\begin{align}
\norm{\widetilde W_1}_{L^{q_1}\pr{B_d}} \le 1, \quad 
\norm{\vp}_{L^\iny\pr{B_d}} \le 1, \quad 
\int_{B_{\rho\pr{7/5}+1/5}} \abs{\gr \vp}^2 \le 1,
\label{scaleBds}
\end{align}
where the last bound is possible because of Lemma \ref{grPhiL2}.
Since $\norm{W_2}_{L^{q_2}\pr{B_d}} \le 1$ and $\norm{V}_{L^p\pr{B_d}} \le 1$ by assumption, then it is clear that $\norm{\widetilde V}_{L^p\pr{B_d}} \le 1$ and $\norm{\widetilde W_2}_{L^{q_2}\pr{B_d}} \le 1$ as well.

\begin{clm}
Let $c> 0$ be such that for any $z \in B_{\rho\pr{7/5}}$, $B_{2c/5}\pr{z} \su B_{\rho\pr{7/5}+1/5}$.
For any $z \in B_{\rho\pr{7/5}}$ and $\eps < r < c/5$, we have
$$\int_{B_r\pr{z}} \abs{\gr \vp}^2 \le C r^\mu,$$
where $\mu = \min\set{2 - \frac 4 {q_1}, 2 - \frac 4 {q_2}, 2 - \frac 2 p}$.
\label{clm1}
\end{clm}

\begin{proof}[Proof of Claim \ref{clm1}]
It suffices to take $z = 0$.
Let $\eta \in C^\iny_0\pr{B_{2r}}$ be a cutoff function such that $\eta \equiv 1$ in $B_r$.
By the divergence theorem,
\begin{align}
0 &= \eps \int \di \brac{\pr{A^T \gr \vp + \widetilde W_2} \eta^2} \nonumber \\
&= \eps \int \di \pr{A^T \gr \vp + \widetilde W_2} \eta^2
+2 \eps \int \eta \gr \eta \cdot \pr{A^T \gr \vp}
+2 \eps \int \eta \gr \eta \cdot \widetilde W_2
\label{DivThmRes}
\end{align}
We now estimate each of the three terms.
By \eqref{vpPDE} and \eqref{scaleBds},
\begin{align}
 &\eps \int \di \pr{A^T \gr \vp + \widetilde W_2} \eta^2 \nonumber \\
 &=- \int A^T \gr \vp \cdot \gr \vp\eta^2 
 + \int \widetilde V \eta^2 
+ \int \pr{\widetilde W_1 - \widetilde W_2} \cdot \gr \vp \eta^2 \nonumber \\
 &\le - \la \int \abs{\gr \vp}^2 \eta^2 
 + \norm{\widetilde V}_{L^p\pr{B_d}} \pr{\int_{B_{2r}} 1 }^{1 - \frac 1 p} \nonumber \\
&+ \pr{\int \abs{\gr \vp}^2 \eta^2 }^{\frac 1 2}\brac{ \norm{\widetilde W_1}_{L^{q_1}\pr{B_d}} \pr{\int_{B_{2r}} 1 }^{1 - \frac 1 {q_1} - \frac 1 2}
 + \norm{\widetilde W_2}_{L^{q_2}\pr{B_d}} \pr{\int_{B_{2r}} 1 }^{1 - \frac 1 {q_2} - \frac 1 2}}\nonumber \\
&\le - \frac{\la}{2} \int \abs{\gr \vp}^2 \eta^2 
 + C r^{2 - \frac 2 p}
+ \frac{C}{2 \la} \pr{r^{2 - \frac 4 {q_1}} + r^{2 - \frac 4 {q_2}}}.
\label{IBd}
\end{align}
By Cauchy-Schwarz and Young's inequality,
\begin{align}
\abs{2 \eps \int \eta \gr \eta \cdot \pr{A^T \gr \vp} } 
&\le 2 \eps \La \int \eta \abs{\gr \eta} \abs{\gr \vp}
\le 2 \eps \La \pr{ \int \abs{\gr \vp}^2 }^{1/2} \pr{ \int \eta^2  \abs{\gr \eta}^2 }^{1/2} \nonumber \\
&\le \frac{C \La^2}{\la} \eps^2 + \frac{\la}{200} \int_{B_{2r}} \abs{\gr \vp}^2.
\label{IIBd}
\end{align}
Similarly, by H\"older and Young's inequality,
\begin{align}
\abs{2 \eps \int \eta \gr \eta \cdot \widetilde W_2}
\le 2 \eps \norm{\widetilde W_2}_{L^{q_2}\pr{B_d}} \pr{\int_{B_{2r}\setminus B_r}  \abs{\gr \eta}^{\frac{q_2}{q_2-1}}}^{1 - \frac 1 {q_2}} 
\le 2 C \eps r^{1 - \frac 2 {q_2}}
\le C \eps^2 + C r^{2 - \frac 4 {q_2}}.
\label{IIIBd}
\end{align}
Combining \eqref{DivThmRes}-\eqref{IIIBd} gives
\begin{align}
\int_{B_r} \abs{\gr \vp}^2 
\le C \eps^2 
+C\pr{ r^{2 - \frac 2 p} + r^{2 - \frac 4 {q_1}} + r^{2 - \frac 4 {q_2}} }
+ \frac{1}{100} \int_{B_{2r}} \abs{\gr \vp}^2
\le C r^\mu
+ \frac{1}{100} \int_{B_{2r}} \abs{\gr \vp}^2,
\label{combinedEst}
\end{align}
since $\mu = \min\set{2 - \frac 4 {q_1}, 2 - \frac 4 {q_2}, 2 - \frac 2 p} \le 2$ and $\eps < r < \frac c 5 < 1$.

If $r^{\mu} \ge \frac{1}{100}$, then by the last estimate of \eqref{scaleBds}, the inequality above implies that
$$\int_{B_r} \abs{\gr \vp}^2 \le C r^{\mu}.$$
Otherwise, if $r^{\mu} < \frac{1}{100}$, choose $k \in \N$ so that
$$\frac{c}{5} \le 2^k r \le \frac{2c}{5}.$$
Since $r^{\mu} \ge \pr{\frac c {2^k 5}}^{\mu} \ge \pr{\frac c {2^k 5}}^{2} = \pr{\frac {c^2} {4^k 25}} \ge C \pr{\frac 1 {100}}^k$, then it follows from repeatedly applying \eqref{combinedEst} that
\begin{align*}
\int_{B_r} \abs{\gr \vp}^2
\le C r^{\mu} + C \pr{\frac{1}{100}}^k \int_{B_{2^kr}} \abs{\gr \vp}^2
\le C r^{\mu},
\end{align*}
proving the claim.
\end{proof}

We now use Claim \ref{clm1} to give an $L^{t_0}$ bound for $\gr \vp$ in $B_{\rho\pr{7/5}}$.
Define 
\begin{align*}
\vp_\eps\pr{z} = \frac{1}{\eps} \vp\pr{\eps^{\frac{2}{\mu}} z}, \qquad
A_\eps\pr{z} = A\pr{\eps^{\frac{2}{\mu}} z}, \qquad
L^*_\eps = \di A_\eps^T \gr.
\end{align*}
Then
\begin{align*}
&\gr \vp_\eps\pr{z} = \eps^{\frac 2 \mu -1}\gr \vp \pr{\eps^{\frac 2 \mu} z} \\
&L^*_\eps \vp_\eps\pr{z} = \eps^{\frac 4 \mu -1} \di\pr{ A^T\pr{\eps^{\frac 2 \mu}z} \gr \vp\pr{\eps^{\frac 2 \mu} z}}.
\end{align*}
It follows from \eqref{vpPDE} that
\begin{align*}
L^*_\eps \vp_\eps\pr{z} + A_\eps^T \gr \vp_\eps \cdot \gr \vp_\eps
&= \eps^{\frac 4 \mu -2} \brac{\eps \di\pr{ A^T\pr{\eps^{\frac 2 \mu}z} \gr \vp\pr{\eps^{\frac 2 \mu} z}} 
+ A\pr{\eps^{\frac{2}{\mu}} z} \gr \vp \pr{\eps^{\frac 2 \mu} z} \cdot \gr \vp \pr{\eps^{\frac 2 \mu} z}} \\
&= \widetilde V_\eps\pr{z}
 + \pr{\widetilde W_{1,\eps}\pr{z} - \widetilde W_{2,\eps}\pr{z}} \cdot \gr \vp_\eps\pr{z} 
- \di \widetilde W_{2,\eps}\pr{z},
\end{align*}
where
\begin{align*}
& \widetilde W_{i, \eps}\pr{z} :=  \eps^{\frac 2 \mu -1} \widetilde W_i\pr{\eps^{\frac 2 \mu} z} \quad \text{ for } \; i = 1, 2 \\
& \widetilde V_\eps\pr{z} := \eps^{\frac 4 \mu -2} \widetilde V\pr{\eps^{\frac 2 \mu} z} .
\end{align*}
With $\de = \eps^{\frac 2 \mu} \le \frac{\rho\pr{7/5} + 1/5}{2}$, note that for $i = 1, 2$,
\begin{align*}
\norm{\widetilde W_{i,\eps}}_{L^{q_i}\pr{B_1}}
&= \pr{\int_{B_1} \abs{\widetilde W_{i,\eps}\pr{z}}^{q_i} dz}^{\frac 1 {q_i}}
= \pr{\int_{B_1} \abs{\eps^{\frac 2 \mu -1} \widetilde W_i\pr{\eps^{\frac 2 \mu} z} }^{q_i} dz}^{\frac 1 {q_i}} \\
&=\eps^{\frac 1 \mu \pr{2-\frac 4 {q_i} - \mu} } \pr{\int_{B_1} \abs{ \widetilde W_i\pr{\eps^{\frac 2 \mu} z} }^{q_i} d\pr{\eps^{\frac 2 \mu} z} }^{\frac 1 {q_i}}
\le \norm{\widetilde W_i}_{L^{q_i}\pr{B_{\de}}} 
\le 1
\end{align*}
and
\begin{align*}
\norm{\widetilde V_\eps}_{L^p\pr{B_1}}
&= \pr{\int_{B_1} \abs{\widetilde V_\eps\pr{z}}^p dz}^{\frac 1 p}
= \pr{\int_{B_1} \abs{\eps^{2\pr{\frac 2 \mu -1}} \widetilde V\pr{\eps^{\frac 2 \mu} z} }^p dz}^{\frac 1 p} \\
&=\eps^{\frac 2 \mu \pr{2-\frac 2 p - \mu} } \pr{\int_{B_1} \abs{ \widetilde V\pr{\eps^{\frac 2 \mu} z} }^p d\pr{\eps^{\frac 2 \mu} z} }^{\frac 1 p}
\le \norm{\widetilde V}_{L^p\pr{B_{\de}}} 
\le 1.
\end{align*}
Moreover,
\begin{align*}
\int_{B_2} \abs{\gr \vp_\eps}^2 
\le \eps^{2\pr{\frac 2 \mu -1}} \int_{B_{2}}\abs{\gr \vp\pr{\eps^{\frac 2 \mu} z}}^2 dz
= \frac{1}{\eps^2} \int_{B_{2\de}} \abs{\gr \vp}^2
\le \frac{1}{\eps^2} C \pr{2\eps^{\frac 2 \mu}}^{\mu} = C,
\end{align*}
where we have used Claim \ref{clm1}.
It follows from Theorem 2.3 and Proposition 2.1 in Chapter V of \cite{Gi83} that there exists $t_0 > 2$ such that
\begin{equation}
\norm{\gr \vp_\eps}_{L^{t_0}\pr{B_1}} \le C.
\label{GiaBd}
\end{equation}
Recalling the definition of $\vp_\eps$, we see that
\begin{align*}
C \ge \norm{\gr \vp_\eps}_{L^{t_0}\pr{B_{1}}} 
= \eps^{\frac 2 \mu - 1 - \frac 4 {\mu t_0}} \norm{\gr \vp}_{L^{t_0}\pr{B_{\de}}}
= \frac{\eps^{\frac 2 \mu - 1 - \frac 4 {\mu t_0}}}{C K} \norm{\gr \Phi}_{L^{t_0}\pr{B_{\de}}}.
\end{align*}
As $\eps = \frac{1}{C K}$, then we conclude that
\begin{align*}
 \norm{\gr \Phi}_{L^{t_0}\pr{B_{ \de}}}
\le C K^{\frac 2 \mu\pr{1 - \frac 2 {t_0}}}.
\end{align*}
Since this derivation works for any $z \in B_{\rho\pr{7/5}}$ and we may cover $B_{\rho\pr{7/5}}$ with $N$ balls of radius $\de$, where $N \sim \de^{-2} = \eps^{-4/\mu} \sim K^{4/\mu}$, then the result follows.
\end{proof}

Using an interpolation argument in combination with the estimates just proved, we establish $L^t$ bounds for $\gr \Phi$.

\begin{lem}
Let $\Phi = \log \phi$, where $\phi$ is the positive multiplier given in Lemma \ref{phiLem}.
Let $t_0 > 0$ be the exponent provided in Lemma \ref{grPhiLt0}.
For any $t \in \brac{2, t_0}$, $$\norm{\gr \Phi}_{L^t\pr{B_{\rho\pr{7/5}}}} \le C K^{1 + \brac{\frac 2 \mu\pr{3 - \frac 2 {t_0}} - 1}{\frac{t_0}{t}}\pr{\frac{t - 2}{t_0 - 2}}},$$ where the constant $C$ depends on $\la$, $\La$, $q_1$, $q_2$, $p$, $t_0$, and $t$.
\end{lem}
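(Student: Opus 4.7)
The plan is to deduce this bound by standard Lyapunov (log-convexity) interpolation of $L^p$ norms applied to $|\gr\Phi|$, using the two endpoint estimates already available: Lemma \ref{grPhiL2} gives $\norm{\gr\Phi}_{L^2(B_{\rho(7/5)+1/5})} \le CK$, and Lemma \ref{grPhiLt0} gives $\norm{\gr\Phi}_{L^{t_0}(B_{\rho(7/5)})} \le CK^{\frac{2}{\mu}(3-\frac{2}{t_0})}$. Both bounds are valid on $B_{\rho(7/5)}$.

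First, for $t \in [2, t_0]$, I would choose $\alpha = \alpha(t) \in [0,1]$ so that
\[
\frac{1}{t} = \frac{\alpha}{2} + \frac{1 - \alpha}{t_0}.
\]
Solving gives $\alpha = \frac{2(t_0 - t)}{t(t_0 - 2)}$ and $1 - \alpha = \frac{t_0(t - 2)}{t(t_0 - 2)}$. Then Hölder's inequality (written as $|\gr\Phi|^{t} = |\gr\Phi|^{t\alpha}\cdot|\gr\Phi|^{t(1-\alpha)}$, with conjugate exponents $\frac{2}{t\alpha}$ and $\frac{t_0}{t(1-\alpha)}$, which are conjugate by the choice of $\alpha$) yields the classical interpolation inequality
\[
\norm{\gr\Phi}_{L^t(B_{\rho(7/5)})} \le \norm{\gr\Phi}_{L^2(B_{\rho(7/5)})}^{\alpha}\, \norm{\gr\Phi}_{L^{t_0}(B_{\rho(7/5)})}^{1-\alpha}.
\]

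Next, I would substitute the two preceding lemmas into this inequality and simplify the exponent of $K$. The power of $K$ produced is
\[
\alpha \cdot 1 \;+\; (1-\alpha)\cdot \frac{2}{\mu}\!\left(3 - \frac{2}{t_0}\right).
\]
Writing $\alpha = 1 - (1-\alpha)$ and using $1 - \alpha = \frac{t_0}{t}\cdot\frac{t-2}{t_0-2}$, this rearranges as
\[
1 + \left[\frac{2}{\mu}\!\left(3 - \frac{2}{t_0}\right) - 1\right]\cdot\frac{t_0}{t}\cdot\frac{t-2}{t_0-2},
\]
which is precisely the exponent claimed. The multiplicative constant depends only on $\la$, $\La$, $q_1$, $q_2$, $p$, $t_0$, and $t$, coming from the constants in Lemmas \ref{grPhiL2} and \ref{grPhiLt0}.

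I expect no real obstacle: the two endpoint estimates are already in place on the common ball $B_{\rho(7/5)}$ (noting that $B_{\rho(7/5)} \subset B_{\rho(7/5)+1/5}$, so the $L^2$ bound transfers), and the rest is the routine Lyapunov interpolation of Lebesgue norms together with elementary algebra. The only mild bookkeeping is checking the two boundary cases ($t = 2$ recovers Lemma \ref{grPhiL2}, $t = t_0$ recovers Lemma \ref{grPhiLt0}), which both hold since the bracketed factor $\frac{t-2}{t_0-2}$ equals $0$ or $1$ respectively.
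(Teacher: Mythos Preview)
Your proposal is correct and is essentially identical to the paper's own proof: both interpolate the $L^t$ norm between the $L^2$ and $L^{t_0}$ endpoints via H\"older's inequality and then simplify the resulting exponent of $K$. The only cosmetic difference is the choice of interpolation parameter---the paper writes $t = 2\gamma + t_0(1-\gamma)$ and obtains exponents $\frac{2\gamma}{t}$, $\frac{t_0(1-\gamma)}{t}$, which are exactly your $\alpha$, $1-\alpha$.
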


\begin{proof}
Take $t \in \pr{2,t_0}$ since the endpoint estimates are given in Lemmas \ref{grPhiL2} and \ref{grPhiLt0}.
Choose $\ga \in \pr{0, 1}$ so that $t = 2 \ga + t_0 \pr{1 - \ga}$, i.e. $\ga = \frac{t_0 - t}{t_0 - 2}$.
Then by the H\"older inequality along with Lemmas \ref{grPhiL2} and \ref{grPhiLt0},
\begin{align*}
\norm{\gr \Phi}_{L^t\pr{B_{\rho\pr{7/5}}}}
&= \pr{\int_{B_{\rho\pr{7/5}}} \abs{\gr \Phi}^{2\ga} \abs{\gr \Phi}^{t_0\pr{1 - \ga}} }^{\frac 1 t} 
\le \brac{\pr{\int_{B_{\rho\pr{7/5}}} \abs{\gr \Phi}^{2}}^{\ga} \pr{ \int_{B_{\rho\pr{7/5}}} \abs{\gr \Phi}^{t_0}}^{1 - \ga} }^{\frac 1 t} \\
&= \norm{\gr \Phi}_{L^2\pr{B_{\rho\pr{7/5}}}}^{\frac{2\ga}{t}}  \norm{\gr \Phi}_{L^{t_0}\pr{B_{\rho\pr{7/5}}}}^{\frac{t_0 \pr{1 - \ga}}{t}}
\le \pr{C K}^{\frac{2\ga}{t}}  \brac{C K^{\frac 2 \mu\pr{3 - \frac 2 {t_0}}}}^{\frac{t_0\pr{1 - \ga}}{t}}  \\
&= C K^{\frac {2\ga} t + \frac 2 \mu\pr{3 - \frac 2 {t_0}}{\frac{t_0\pr{1 - \ga}}{t}} }.
\end{align*}
Since $\frac 2 \mu\pr{3 - \frac 2 {t_0}} > 1$, then simplifying the exponent gives
\begin{align*}
\frac {2\ga} t + \brac{1 + \frac 2 \mu\pr{3 - \frac 2 {t_0}} - 1}{\frac{t_0\pr{1 - \ga}}{t}}
&= 1 + \brac{\frac 2 \mu\pr{3 - \frac 2 {t_0}} - 1} \frac{t_0}{t} \pr{\frac{t - 2}{t_0 - 2}},
\end{align*}
as required.
\end{proof}

\begin{cor}
Let $\Phi = \log \phi$, where $\phi$ is the positive multiplier given in Lemma \ref{phiLem}.
Then for any $\eps > 0$, there exists $t > 2$, depending on $\eps$, $q_1$, $q_2$, $p$, and $t_0$, such that 
\begin{equation}
\norm{\gr \Phi}_{L^t\pr{B_{\rho\pr{7/5}}}} \le C K^{1 + \eps},
\label{PhitBd}
\end{equation}
where $C$ depends on $\la$, $\La$, $q_1$, $q_2$, $p$, $t_0$, and $t$.
\label{PhiepsBound}
\end{cor}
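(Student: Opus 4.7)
The plan is to derive the corollary as an immediate consequence of the preceding interpolation lemma, by choosing the exponent $t$ sufficiently close to $2$.

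Concretely, the previous lemma gives, for every $t \in [2, t_0]$, the bound
\[
\norm{\gr \Phi}_{L^t\pr{B_{\rho\pr{7/5}}}} \le C K^{1 + \brac{\frac 2 \mu\pr{3 - \frac 2 {t_0}} - 1}\frac{t_0}{t}\pr{\frac{t - 2}{t_0 - 2}}}.
\]
Writing the exponent of $K$ as $1 + \eta(t)$, where
\[
\eta(t) := \brac{\frac 2 \mu\pr{3 - \frac 2 {t_0}} - 1}\frac{t_0}{t}\pr{\frac{t - 2}{t_0 - 2}},
\]
we observe that $\eta$ is a continuous function of $t$ on $(2, t_0]$ with $\eta(t) \to 0$ as $t \to 2^+$, since the factor $(t-2)/(t_0 - 2)$ vanishes in that limit while the other factors stay bounded (and $\frac{2}{\mu}(3 - \frac{2}{t_0}) - 1$ is a fixed positive constant depending only on $\mu$ and $t_0$).

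Thus, given $\eps > 0$, we simply choose $t = t(\eps) \in (2, t_0]$ close enough to $2$ so that $\eta(t) \le \eps$. With this choice the preceding lemma immediately yields
\[
\norm{\gr \Phi}_{L^t\pr{B_{\rho\pr{7/5}}}} \le C K^{1 + \eps},
\]
where the constant $C$ inherits its dependence on $\la$, $\La$, $q_1$, $q_2$, $p$, $t_0$, and the chosen $t$. There is no real obstacle here; the only thing to check is that the dependence of $t$ on $\eps$ (and on the structural parameters $q_1$, $q_2$, $p$, $t_0$ through $\mu$) is made explicit, which is already handled by the quantitative formula for $\eta(t)$ above.
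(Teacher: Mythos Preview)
Your proof is correct and is exactly the intended argument: the paper states this corollary without proof, treating it as the obvious consequence of the preceding interpolation lemma obtained by letting $t \to 2^+$ so that the excess exponent $\eta(t)$ becomes at most $\eps$. Your verification that $\tfrac{2}{\mu}(3 - \tfrac{2}{t_0}) - 1 > 0$ and that $\eta(t)\to 0$ as $t\to 2^+$ is precisely what is needed.
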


\section{The Beltrami operators}
\label{S4}

We define a Beltrami operator that allows us to the reduce the second-order equation to a first order system.
For a complex-valued function $f = u + i v$, define 
\begin{align}
D f &=  \bar\del f + \eta\pr{z} \del f + \nu\pr{z} \ol{\del f} ,
\label{DDef} 
\end{align}
where 
\begin{align}
& \bar \del = \tfrac{1}{2} \pr{\del_x + i \del_y} \nonumber \\
&  \del = \tfrac{1}{2} \pr{\del_x - i \del_y} \nonumber \\
& \eta\pr{z} = \frac{a_{11} - a_{22} }{\det\pr{A + I}}  + i \frac{a_{12} + a_{21}}{\det\pr{A+I}}
\label{etaDef} \\
& \nu\pr{z} = \frac{\det A - 1}{\det\pr{A + I}} + i \frac{a_{21} - a_{12}}{\det\pr{A+I}}
\label{nuDef}.
\end{align}

\begin{lem}
For $\eta, \nu$ defined above, there exists $K < 1$ so that
$$\abs{\eta\pr{z}} + \abs{\nu\pr{z}} \le K.$$
\label{etanuBds}
\end{lem}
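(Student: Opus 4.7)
The plan is to reduce $\abs{\eta(z)} + \abs{\nu(z)} \le K < 1$ to a pointwise algebraic inequality in the entries of $A$, and then extract a quantitative gap from the ellipticity assumption. Write $t = \tr A$, $d = \det A$, and decompose $A = A_s + A_a$ into its symmetric and antisymmetric parts; setting $b = (a_{12}-a_{21})/2$, one has $d = \det A_s + b^2$. The assumption \eqref{ellip} only controls $A_s$, but this is enough: it forces the eigenvalues of $A_s$ into $[\la, 2\La]$, so $\det A_s \ge \la^2$, $t \ge 2\la$, and $\det(A+I) = 1 + t + d > 0$. Thus the formulas \eqref{etaDef}, \eqref{nuDef} are well defined and $\det(A+I)$ is bounded below uniformly.

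A direct computation from \eqref{etaDef} and \eqref{nuDef} yields the two compact identities
\[
\abs{\eta}^2 \det(A+I)^2 = t^2 - 4 \det A_s, \qquad \abs{\nu}^2 \det(A+I)^2 = (d-1)^2 + 4b^2.
\]
Since $\det(A+I) > 0$, the claim $\abs{\eta}+\abs{\nu} < 1$ is equivalent to $\det(A+I) > \sqrt{t^2 - 4\det A_s} + \sqrt{(d-1)^2 + 4b^2}$. Squaring once, rearranging, and squaring a second time (legal as all quantities are positive), this reduces to the polynomial inequality
\[
\bigl(t(1+d) + 4 \det A_s\bigr)^2 > (t^2 - 4\det A_s)\bigl((d-1)^2 + 4b^2\bigr).
\]

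The crux of the argument is the clean algebraic identity
\[
\bigl(t(1+d) + 4\det A_s\bigr)^2 - (t^2 - 4\det A_s)\bigl((d-1)^2 + 4b^2\bigr) = 4 (\det A_s)\, \det(A+I)^2,
\]
which I would verify by direct expansion: using $d - b^2 = \det A_s$, every $b$-dependent cross term cancels, and the residual collapses to $(1+t+d)^2 = \det(A+I)^2$. Spotting this particular factorization is the main obstacle; once anticipated the verification is routine. Combined with $\det A_s \ge \la^2$ and the uniform upper bounds on $t$, $d$, $\abs{b}$ obtained from $\abs{a_{ij}} \le \La$ (which, through the identity itself, also bound the square-root term), the identity yields
\[
1 - (\abs{\eta} + \abs{\nu})^2 \ge \frac{4 \det A_s}{\bigl(t(1+d)+4\det A_s\bigr) + \sqrt{(t^2-4\det A_s)((d-1)^2+4b^2)}} \ge c(\la, \La) > 0,
\]
and therefore $K = K(\la, \La) < 1$, as claimed.
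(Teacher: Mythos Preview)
Your proof is correct. Both your argument and the paper's rest on the same ellipticity consequence $4a_{11}a_{22} - (a_{12}+a_{21})^2 = 4\det A_s \ge 4\la^2$, but they organize the estimate differently. The paper applies this bound to $|\eta|$ and $|\nu|$ separately: it rewrites each squared numerator so that the combination $-4a_{11}a_{22} + (a_{12}+a_{21})^2$ appears, obtaining $|\eta| \le \sqrt{(\tr A)^2 - 4\la^2}/\det(A+I)$ and $|\nu| \le \sqrt{(\det A+1)^2 - 4\la^2}/\det(A+I)$, and then simply observes that $\sqrt{x^2 - 4\la^2} + \sqrt{y^2 - 4\la^2} < x + y$ with $x = \tr A$, $y = \det A + 1$, $x+y = \det(A+I)$. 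Your route instead squares the full sum $|\eta|+|\nu|$ twice and isolates the exact factorization $P^2 - Q = 4(\det A_s)\det(A+I)^2$, delaying the use of ellipticity to the very end. The paper's version is shorter because the inequality is inserted earlier, before the terms interact; your version is longer but delivers an explicit quantitative lower bound on $1 - (|\eta|+|\nu|)^2$ in terms of $\la, \La$, which the paper leaves implicit.
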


The following proof is purely computational and relies on the assumption \eqref{ellip}.

\begin{proof}
We have 
\begin{align*}
\abs{\eta} 
&= \frac{\sqrt{\pr{a_{11} - a_{22}}^2 + \pr{a_{12} + a_{21}}^2 }}{\det\pr{ A + I}}
= \frac{\sqrt{ \tr A^2 - 4 a_{11} a_{22} + \pr{a_{12} + a_{21}}^2 }}{\det A + \tr A + 1} \\
\abs{\nu}
&= \frac{\sqrt{\pr{\det A -1}^2 + \pr{a_{21} - a_{12}}^2 }}{\det\pr{ A + I}}
= \frac{\sqrt{\pr{\det A +1}^2 - 4 a_{11}a_{22} +  \pr{a_{21} + a_{12}}^2 }}{\det A + \tr A + 1}.
\end{align*}
Note that it follows from \eqref{ellip} that
\[
a_{11}a_{22}-\frac 14(a_{12}+a_{21})^2\ge\lambda^2.
\]
Therefore, we see that
\begin{align*}
\abs{\eta\pr{z}} + \abs{\nu\pr{z}}
&\le \frac{\sqrt{ \tr A^2 - 4\lambda^2 } + \sqrt{\pr{\det A +1}^2 - 4\lambda^2 }}{\tr A + \det A + 1} =: K.
\end{align*}
\end{proof}

Let $f = u + iv$.
A computation shows that
\begin{align*}
Df 
&=  \frac{ \pr{a_{11} + \det A} + i {a_{21}} }{\det\pr{A + I}}u_x 
+ \frac{ {a_{12}} + i \pr{a_{22} + \det A}}{ \det\pr{A + I}} u_y 
+ \frac{\pr{a_{11} + 1} + i {a_{12}}}{\det\pr{A + I}} i v_x
+ \frac{ {a_{21}} + i\pr{a_{22}  + 1} }{ \det\pr{A + I}} i v_y.
\end{align*}
This presentation will be useful in subsequent sections.

In addition to the operator $D$, we will also make use of an operator that is related to $D$ through some function $w$.
For a given function $w$, set
$$\eta_w\pr{z} = \left\{\begin{array}{ll} 
\eta\pr{z} + \nu\pr{z} \frac{\ol{\del w}}{\del w} & \text{ for } \del w \ne 0  \\  
\eta\pr{z} + \nu\pr{z} & \text{ otherwise }
\end{array} \right.,$$
where $\eta$ and $\nu$ are as defined in \eqref{etaDef} and \eqref{nuDef}, respectively.
By Lemma \ref{etanuBds}, it follows that $\disp \abs{\eta_w} \le K$.
Define
\begin{align}
D_w f = \ol{\del} f + \eta_w\pr{z} \del f.
\label{DwDef}
\end{align}
If $\eta_w\pr{z} = \al_w\pr{z} + i \be_w\pr{z}$, then
\begin{align}
D_w &= \frac{1}{2} \brac{\del_x + i \del_y + \pr{\al_w + i \be_w} \pr{\del_x - i \del_y } }
\nonumber \\
&= \frac{1 + \al_w + i \be_w}{2} \del_x + \frac{\be_w + i\pr{1 - \al_w}}{2} \del_y
\end{align}
Bertrami operators of this form will be used in the proofs of the main theorems.

At times, the dependence on $w$ will not be important to our arguments, so we define
\begin{equation}
\hat D = \frac{1 + \al + i \be}{2} \del_x + \frac{ \be + i \pr{1 - \al}}{2} \del_y,
\label{hatDDef}
\end{equation}
where $\al, \be$ are assumed to be functions of $z$ such that $\disp \al^2 + \be^2 \le K < 1$.
Associated to $\hat D$ is the symmetric second-order elliptic operator $\hat L = \di \pr{ \hat A \gr}$ with 
\begin{equation}
\hat A 
= \brac{\begin{array}{ll} \frac{\pr{1+\al}^2 + \be^2}{1 - \al^2 - \be^2} & \frac{2 \be}{1 - \al^2 - \be^2} \\ \frac{2\be}{1 - \al^2 - \be^2} &  \frac{\pr{1-\al}^2 + \be^2}{1 - \al^2 - \be^2} \end{array}}
= \brac{\begin{array}{ll} \hat a_{11} & \hat a_{12} \\ \hat a_{12} & \hat a_{22} \end{array}}.
\label{hatADef}
\end{equation}
A computation shows that
\begin{align*}
& \brac{\frac{\pr{1+\al}^2 + \be^2}{1 - \al^2 - \be^2}}\brac{ \frac{\pr{1-\al}^2 + \be^2}{1 - \al^2 - \be^2}} - \frac 1 4\brac{\frac{2 \be}{1 - \al^2 - \be^2} + \frac{2 \be}{1 - \al^2 - \be^2}}^2 \\
&= \frac{1}{ \pr{1 - \al^2 - \be^2}^2}\brac{\pr{1 - \al^2}^2 + 2\be^2\pr{1 + \al^2} + \be^4 - 4\be^2} \\
&= \frac{1 - 2\al^2 + \al^4 - 2\pr{1 - \al^2}\be^2 + \be^4}{ \pr{1 - \al^2 - \be^2}^2}
= 1.
\end{align*}
Therefore, $\hat A$ satisfies the same ellipticity and boundedness given in \eqref{ellip} and \eqref{ABd} with possibly different constants $\lambda$, $\Lambda$. 

\begin{rem}
\label{rem4.1}
{Note that if $D$ is given as in \eqref{DDef} and $Df=0$, then $D_wf=0$ with $w=f$, where $D_w$ is defined in \eqref{DwDef}.}
\end{rem}

\subsection{A Hadamard three-quasi-circle theorem}

Within this subsection, we present the Hadamard three-quasi-circle theorem.
We originally proved this result in \cite[Theorem~4.5]{DKW17}, but include the proof here for completeness.
The related lemmas are all presented, but we refer the reader to \cite{DKW17} for their computational proofs.

The following lemmas show that $\hat D$ relates to $\hat L$ in some of the same ways that $\ol \del$ relates to $\LP$.
These properties will allow us to prove the Hadamard three-quasi-circle theorem.

\begin{lem}{\rm\cite[Lemma~4.2]{DKW17}}
If $\hat D f = 0$, where $f\pr{x,y} = u\pr{x,y} + i v\pr{x,y}$ for real-valued $u$ and $v$, then
\begin{align*}
& \hat L u = 0 = \hat Lv.
\end{align*}
\label{rpLem}
\end{lem}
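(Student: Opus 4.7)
The plan is to expand $\hat D f = 0$ with $f = u + iv$ into its real and imaginary parts, which gives a pair of first-order scalar equations. Separating real and imaginary parts of
\[
\frac{1+\al+i\be}{2}(u_x + iv_x) + \frac{\be + i(1-\al)}{2}(u_y + iv_y) = 0
\]
produces a $2\times 2$ linear system for $(v_x,v_y)$ in terms of $(u_x,u_y)$ whose coefficient determinant is $-(1-\al^2-\be^2)$. By Lemma \ref{etanuBds}, $\al^2+\be^2 \le K < 1$, so this determinant is bounded away from zero and the system may be inverted pointwise.

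After inverting and simplifying, the key observation I want to reach is that the resulting relation takes the clean generalized Cauchy--Riemann form
\[
\hat A \gr u = \begin{pmatrix} v_y \\ -v_x \end{pmatrix},
\]
where the coefficients on the left match exactly the entries of $\hat A$ displayed in \eqref{hatADef}. Taking the Euclidean divergence of both sides, the right-hand side vanishes by equality of mixed partials, giving $\di(\hat A \gr u) = 0$, which is precisely $\hat L u = 0$.

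For the conjugate statement $\hat L v = 0$, I would use $\det \hat A = 1$ (established in the computation preceding the lemma) to invert the relation above. This yields the dual identity $\hat A \gr v = (-u_y, u_x)^T$, and taking the divergence once more produces $\hat L v = 0$ for the same reason.

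The only real obstacle is algebraic bookkeeping: one must verify that the coefficients produced by solving the Beltrami system reorganize precisely into the entries $(1\pm\al)^2 + \be^2$ and $2\be$ (over $1-\al^2-\be^2$) appearing in \eqref{hatADef}. This is not conceptually deep but must be done carefully to avoid sign errors; once it is in place, the divergence-free right-hand sides make the conclusion immediate.
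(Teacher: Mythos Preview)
Your proposal is correct and follows the natural computational route: separating real and imaginary parts of $\hat D f = 0$ yields a generalized Cauchy--Riemann system, which after the linear algebra you outline becomes exactly $\hat A\gr u = (v_y,-v_x)^T$, whence $\hat L u = 0$; the dual relation via $\det\hat A = 1$ gives $\hat L v = 0$. The paper does not reproduce a proof of this lemma but simply cites \cite{DKW17} for the computation, so your argument is precisely the kind of verification intended there.
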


We find another parallel with the Laplace equation.  
As in the case of $\hat L = \LP$, the logarithm of the norm of ${f}$ is a subsolution to the second-order equation whenever $\hat D f = 0$. 
To see this, it suffices to prove that

\begin{lem}{\rm\cite[Lemma~4.3]{DKW17}}
If $\hat D f = 0$ and $f \ne 0$, then $\hat L\brac{ \log\abs{f\pr{z}}} = 0$.
\label{logLem}
\end{lem}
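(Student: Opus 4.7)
The plan is to reduce to Lemma \ref{rpLem} by showing that $\log|f|$ is, locally, the real part of a $\hat D$-null function. The key point is that $\hat D = \bar\partial + (\al + i\be)\partial$ is a linear first-order operator whose coefficients $\al,\be$ do not depend on $f$. Consequently, the Wirtinger chain rule $\bar\partial(g \circ f) = g'(f)\bar\partial f$ and $\partial(g \circ f) = g'(f)\partial f$, valid for any holomorphic $g$, yields
\[
\hat D(g\circ f) = g'(f)\, \hat D f,
\]
and in particular $\hat D(g \circ f) = 0$ whenever $\hat D f = 0$.

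Fix a point $z_0$ with $f(z_0)\neq 0$. By continuity of $f$, there is a simply connected open neighborhood $U$ of $z_0$ on which $f$ is nowhere zero, so one may pick a single-valued continuous branch $F = \log f$ on $U$; its real part is $\log|f|$. The identity above with $g = \log$ gives $\hat D F = (1/f)\hat D f = 0$. Applying Lemma \ref{rpLem} to $F = \log|f| + i\arg f$, one concludes $\hat L[\log|f|] = 0$ on $U$. Since $z_0 \in \{f \neq 0\}$ was arbitrary and $\hat L$ is a local operator, $\hat L[\log|f|] = 0$ holds wherever $f \neq 0$.

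The main technical obstacle is justifying the chain rule in the low-regularity setting, since $\al,\be$ are merely bounded measurable and $f$ need not be classically $C^1$. This is handled by a Caccioppoli-type estimate together with a Meyers-type higher-integrability argument (parallel to Lemma \ref{Cacc+}): solutions of $\hat D f = 0$ lie in $W^{1,p}_{loc}$ for some $p > 2$, so they are continuous, and since $|f|$ is bounded away from zero on $U$, the branch $\log f$ inherits $W^{1,p}_{loc}(U)$ regularity. The Wirtinger chain rule and the equation $\hat L[\log|f|] = 0$ are then valid in the standard distributional sense. An alternative, brute-force route would be to compute $\hat L \log(u^2 + v^2)$ directly, using $\hat L u = \hat L v = 0$ from Lemma \ref{rpLem} together with the Beltrami equations satisfied by $u,v$ to cancel all of the resulting quadratic terms; that calculation is considerably longer but relies only on elementary algebra and the symmetry of $\hat A$.
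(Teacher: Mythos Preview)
Your argument is correct and is genuinely different from the paper's route. The paper does not reprove this lemma but defers to \cite{DKW17} for a ``computational proof,'' which is exactly the brute-force alternative you sketch at the end: expand $\hat L\bigl[\tfrac12\log(u^2+v^2)\bigr]$ directly, use $\hat L u = \hat L v = 0$ from Lemma~\ref{rpLem}, and then invoke the first-order Beltrami relations between $u_x,u_y,v_x,v_y$ to kill the remaining cross terms. That calculation is elementary but long, and it obscures why the result is true.

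Your approach is cleaner: because $\hat D = \bar\partial + (\al+i\be)\partial$ has coefficients independent of the unknown, post-composition by a holomorphic $g$ commutes with $\hat D$ via the Wirtinger chain rule, so $\hat D(\log f)=0$ and Lemma~\ref{rpLem} finishes the job. The price you pay is a regularity justification (the chain rule for $g\circ f$ when $f$ is only $W^{1,2}_{\loc}$), which you handle correctly by appealing to Meyers-type higher integrability to place $f$ in $W^{1,p}_{\loc}$, $p>2$, hence $C^0$, making a local branch of $\log f$ available and the chain rule valid distributionally. The computational proof in \cite{DKW17} avoids this step entirely since it never leaves real-valued functions, but your argument explains the result structurally and generalizes immediately to any holomorphic post-composition.
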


Using the fundamental solution $\hat G$ for the operator $\hat L$, we can now prove the following Hadamard three-quasi-ball inequality. 
We would like to mention that similar theorems were proved by Alessandrini and Escauriaza in \cite{AE08}, see Propositions 1, 2, using quasi-regular mappings.
\begin{thm}
Let $f$ be a function for which $\hat D f = 0$ in $Q_{s_0}$.
Set
$$M\pr{s} = \max\set{\abs{f\pr{z}} : z \in Z_s }.$$
Then for any $0 < s_1 < s_2 < s_3 < s_0$,
\begin{equation}
\log\pr{\frac{s_3}{s_1}} \log M\pr{s_2} \le \log\pr{\frac{s_3}{s_2}} \log M\pr{s_1} + \log\pr{\frac{s_2}{s_1}} \log M\pr{s_3}.
\end{equation}
\label{Hadamard}
\end{thm}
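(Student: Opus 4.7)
The plan is to mimic the classical proof of Hadamard's three-circle theorem for holomorphic functions, with $\log|f|$ playing the role of a subharmonic function and the fundamental solution $\hat G$ of $\hat L$ playing the role of $\log|z|$. The two structural facts I rely on are Lemma \ref{logLem}, which gives $\hat L[\log|f|] = 0$ wherever $f \neq 0$ (so that $\log|f|$ is an $\hat L$-subsolution globally once we allow it to take the value $-\infty$ on the zero set), and the definition of the quasi-circles, which makes $\hat G$ the natural constant on each $Z_s$, namely $\hat G \equiv \log s$ on $Z_s^{\hat L}$.

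First, I choose constants $a, b \in \R$ so that the affine combination
\[
h(z) := a + b\, \hat G(z)
\]
equals $\log M(s_1)$ on $Z_{s_1}$ and $\log M(s_3)$ on $Z_{s_3}$. Since $\hat G \equiv \log s_j$ on $Z_{s_j}$, this forces
\[
b = \frac{\log M(s_3) - \log M(s_1)}{\log(s_3/s_1)}, \qquad a = \log M(s_1) - b\log s_1.
\]
Because $\hat G$ is $\hat L$-harmonic away from the origin, so is $h$ in the open annular region $\Om := Q_{s_3} \setminus \ol{Q_{s_1}}$, which is well-defined and has $Z_{s_1}$ and $Z_{s_3}$ as boundary components (nesting $Q_{s_1} \subset Q_{s_3}$ follows directly from the definition of $Q_s$ as a sublevel set of $\hat G$).

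Next, I consider $v(z) := \log|f(z)| - h(z)$ in $\Om$. By Lemma \ref{logLem}, $v$ is an $\hat L$-subsolution on $\Om \setminus \{f = 0\}$; at zeros of $f$, $v = -\infty$ and we interpret $v$ as upper-semicontinuous, which is the standard way to accommodate subharmonic functions with $-\infty$ values in the maximum-principle framework. On $Z_{s_1}$ we have $\log|f| \le \log M(s_1) = h$, so $v \le 0$; analogously $v \le 0$ on $Z_{s_3}$. The maximum principle for $\hat L$-subsolutions in $\Om$ therefore gives $v \le 0$ throughout $\Om$, and in particular on $Z_{s_2} \subset \Om$. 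Evaluating on $Z_{s_2}$ where $h \equiv a + b\log s_2$ yields
\[
\log M(s_2) \le \frac{\log(s_3/s_2)}{\log(s_3/s_1)}\,\log M(s_1) + \frac{\log(s_2/s_1)}{\log(s_3/s_1)}\,\log M(s_3),
\]
and multiplying through by $\log(s_3/s_1)$ gives the claimed inequality.

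The mildly delicate point is the behavior of $v$ at zeros of $f$ and at the origin; both are handled in the standard way since $\log|f|$ is upper-semicontinuous and $-\infty$ values cannot create a new maximum of $v$. The nesting of the quasi-balls and the continuity of $\hat G$ (ensured by the representation $\hat G(z) = \log\hat\ell(z)$ and the bounds of Lemma \ref{fundSolBds} applied to $\hat L$) guarantee that the annular comparison domain is genuinely an open set with the two quasi-circles as its boundary, which is the only geometric input needed for the maximum principle to apply.
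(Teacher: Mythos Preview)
Your proof is correct and follows essentially the same approach as the paper: both use Lemma \ref{logLem} to treat $\log|f|$ as an $\hat L$-subsolution, compare it against an affine function of $\hat G$ on the quasi-annulus, and apply the maximum principle. The only cosmetic difference is that you build the comparison function $a + b\hat G$ with two parameters chosen to match the boundary data directly, whereas the paper uses a single parameter $a\hat G$ chosen to equalize the two boundary maxima and then takes a convex combination; the two parameterizations yield the same inequality.
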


\begin{proof}
Let $\mathcal{A}_{s_1, s_3} = \set{z : s_1 \le \ell\pr{ z} \le s_3} = \overline{ Q_{s_3} \setminus Q_{s_1}}$, where $\ell$ is associated to $\hat G$, the fundamental solution of $\hat L$.  
By Lemma \ref{ZsBounds}, this set is contained in an annulus with inner and outer radius depending on $s_1$, $s_3$, $\la$, and $\La$.
In particular, it is bounded and does not contain the origin.
Therefore, $\hat G\pr{z}$ is bounded on $\mathcal{A}_{s_1, s_3}$. 
Let $z_0$ be in the interior of $\mathcal{A}_{s_1,s_3}$. 
If $f\pr{z_0}=0$, then $a \hat G\pr{z_0}+\log\abs{f\pr{z_0}}=-\infty$ for any $a\in\R$. 
On the other hand, if $f\pr{z_0} \ne 0$, then Lemma~\ref{logLem} implies that $\hat L \brac{a \hat G\pr{z} + \log\abs{f\pr{z}}} = 0$ for $z$ near $z_0$. 
By the maximum principle, $z_0$ cannot be an extremal point. 
Therefore, $a \hat G\pr{z} + \log\abs{f\pr{z}}$ takes it maximum value on the boundary of $\mathcal{A}_{s_1, s_3}$. We will choose the constant $a\in\R$ so that 
$$\max\set{a \hat G\pr{z} + \log\abs{f\pr{z}} : z \in Z_{s_1}}  = \max\set{a \hat G\pr{z} + \log\abs{f\pr{z}} : z \in  Z_{s_3}},$$
or rather
$$\log \pr{s_1^a M\pr{s_1}}  = \log\pr{s_3^a M\pr{s_3}}.$$
It follows that for any $z \in \mathcal{A}_{s_1, s_3}$,
\begin{align*}
a \hat G\pr{z} + \log\abs{f\pr{z}} 
&\le\log \pr{s_i^a M\pr{s_i}}  \quad {\mbox{for}}\; i=1,3.
\end{align*}
Furthermore, for any $s_2 \in \pr{s_1, s_3}$,
\begin{align*}
\max\set{a \hat G\pr{z} + \log\abs{f\pr{z}}: z \in Z_{s_2}} 
&\le \log \pr{s_i^a M\pr{s_i}} \quad {\mbox{for}}\; i = 1,3,
\end{align*}
or
\begin{align*}
\log \pr{s_2^a M\pr{s_2}} 
\le \log \pr{s_i^a M\pr{s_i}} \quad {\mbox{for}}\; i=1,3.
\end{align*}
Consequently,
$${s_2^a M\pr{s_2}} \le 
{s_i^a M\pr{s_i}} \quad {\mbox{for}}\;i=1,3,$$
so that for any $\tau \in \pr{0,1}$, since ${s_1^a M\pr{s_1}} = {s_3^a M\pr{s_3}}$, then
\begin{align*}
&{s_2^a M\pr{s_2}} \le  \brac{s_1^a M\pr{s_1}}^\tau \brac{s_3^a M\pr{s_3}}^{1-\tau} \\
& \brac{ M\pr{s_2}}^{\log\pr{\frac{s_3}{s_1}}} \le  \brac{\pr{\frac{s_1}{s_2}}^a M\pr{s_1}}^{\tau \log\pr{\frac{s_3}{s_1}}} \brac{\pr{\frac{s_3}{s_2}}^a M\pr{s_3}}^{\pr{1-\tau}\log\pr{\frac{s_3}{s_1}}}.
\end{align*}
We choose $\tau$ so that $\tau \log\pr{\frac{s_3}{s_1}} = \log\pr{\frac{s_3}{s_2}}$. 
Then $\pr{1-\tau} \log\pr{\frac{s_3}{s_1}} = \log\pr{\frac{s_2}{s_1}}$ and
\begin{align*}
&\brac{\pr{\frac{s_1}{s_2}}^a }^{\tau \log\pr{\frac{s_3}{s_1}}} \brac{\pr{\frac{s_3}{s_2}}^a }^{\pr{1-\tau}\log\pr{\frac{s_3}{s_1}}} 
= \exp\brac{a\log\pr{\frac{s_3}{s_2}} \log\pr{\frac{s_1}{s_2}} + a \log\pr{\frac{s_2}{s_1}} \log \pr{\frac{s_3}{s_2}} } = 1.
\end{align*}
Therefore,
\begin{align*}
& { M\pr{s_2}}^{\log\pr{\frac{s_3}{s_1}}} \le  { M\pr{s_1}}^{ \log\pr{\frac{s_3}{s_2}}} {M\pr{s_3}}^{\log\pr{\frac{s_2}{s_1}}}.
\end{align*}
Taking logarithms completes the proof.
\end{proof}

\begin{cor}
\label{3circle}
Let $f$ satisfy $\hat Df=0$  in $Q_{s_0}$. 
Then for $0<s_1<s_2<s_3 < s_0$
\[
\norm{f}_{L^\iny\pr{Q_{s_2}}} \le \pr{\norm{f}_{L^\iny\pr{Q_{s_1}}}}^\te \pr{\norm{f}_{L^\iny\pr{Q_{s_3}}}}^{1 - \te},
\]
where
\[
\te=\frac{\log(s_3/s_2)}{\log(s_3/s_1)}.
\]
\end{cor}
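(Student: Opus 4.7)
The corollary is the natural exponentiated form of Theorem~\ref{Hadamard}, and the plan is to bridge the two statements by identifying $M(s)$ with $\norm{f}_{L^\iny\pr{Q_s}}$ and then performing elementary algebra.

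First, I would show that $M(s) = \norm{f}_{L^\iny\pr{Q_s}}$ for every $s < s_0$. By Lemma~\ref{logLem}, $\hat L\pr{\log\abs{f}} = 0$ at every point where $f \ne 0$, so $\log\abs{f}$ is an $\hat L$-subsolution in $Q_{s_0}$ (with the value $-\iny$ permitted at zeros of $f$, which is compatible with the subsolution property). Equivalently, $\abs{f}$ is $\hat L$-subharmonic. Since $\hat L = \di\pr{\hat A \gr}$ is a uniformly elliptic divergence-form operator in the plane (as established in the computation immediately following \eqref{hatADef}), the weak maximum principle applies on $Q_s$, which is a bounded open set whose boundary is exactly the quasi-circle $Z_s$. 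Therefore
\[
\norm{f}_{L^\iny\pr{Q_s}} = \max_{z \in Z_s}\abs{f\pr{z}} = M(s).
\]

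Next, I would apply Theorem~\ref{Hadamard} with the chosen $s_1 < s_2 < s_3 < s_0$ to obtain
\[
\log\pr{\frac{s_3}{s_1}} \log M\pr{s_2} \le \log\pr{\frac{s_3}{s_2}} \log M\pr{s_1} + \log\pr{\frac{s_2}{s_1}} \log M\pr{s_3}.
\]
Dividing both sides by $\log\pr{s_3/s_1} > 0$ and setting $\te = \log\pr{s_3/s_2}/\log\pr{s_3/s_1}$, so that $1-\te = \log\pr{s_2/s_1}/\log\pr{s_3/s_1}$, this becomes
\[
\log M\pr{s_2} \le \te \log M\pr{s_1} + \pr{1-\te} \log M\pr{s_3}.
\]
Exponentiating yields $M\pr{s_2} \le M\pr{s_1}^\te M\pr{s_3}^{1-\te}$, and substituting the identification from the first step completes the proof.

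The only non-cosmetic point is the first step, i.e., ensuring that $|f|$ really does attain its supremum over $Q_s$ on the boundary $Z_s$. The zero set of $f$ has to be handled with minor care, since Lemma~\ref{logLem} is stated only where $f \ne 0$; the standard workaround is to note that $\log\pr{\abs{f}+\eps}$ is a legitimate $\hat L$-subsolution for every $\eps > 0$ (or, equivalently, to observe directly that $\abs{f}$ is $\hat L$-subharmonic via the argument used in Theorem~\ref{Hadamard} with $a = 0$) and then let $\eps \to 0$. Once this identification is in hand, the remainder is purely algebraic manipulation of the three-quasi-circle inequality.
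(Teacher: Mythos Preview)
Your proposal is correct and matches the paper's intended derivation: the corollary is stated without proof as an immediate consequence of Theorem~\ref{Hadamard}, and the bridge you supply---identifying $M(s)$ with $\norm{f}_{L^\iny\pr{Q_s}}$ via the maximum principle (exactly the $a=0$ case of the argument in the proof of Theorem~\ref{Hadamard}) and then exponentiating---is precisely what is implicit. One small caution: the claim that $\log\pr{\abs{f}+\eps}$ is an $\hat L$-subsolution is not immediate from Lemma~\ref{logLem} and would require its own verification, so the parenthetical route you mention (reusing the Theorem~\ref{Hadamard} argument with $a=0$ to rule out interior maxima of $\abs{f}$ directly) is the cleaner justification.
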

\begin{rem}
{From Remark~\ref{rem4.1}, we know that if $Df=0$, then $D_ff=0$. Hence Corollary~\ref{3circle} applies to such $f$}.
\end{rem}

\subsection{The similarity principle}

This subsection is similar to Section 4.4 of \cite{DKW17}. 
As usual, we include it here for the sake of completeness. 
The approach here is based on the work of Bojarksi, as presented in \cite{Boj09}. Define the operators
$$T{\om}\pr{z} = -\frac{1}{\pi} \int_\Om \frac{\om\pr{\zeta}}{\zeta - z} d\zeta$$
$$S \om \pr{z} = -\frac{1}{\pi} \int_\Om \frac{\om\pr{\zeta}}{\pr{\zeta - z}^2} d\zeta.$$

We use the of the following results, collected from \cite{Boj09}.

\begin{lem}
Suppose that $g \in L^p$ for some $p \ge 2$.  
Then $T g$ exists everywhere as an absolutely convergent integral and $S g$ exists almost everywhere as a Cauchy principal limit.
The following relations hold:
\begin{align*}
& \bar{\del} \pr{T g} = g \\
& \del \pr{T g} = S g \\
& \abs{T g\pr{z}} \le c_p \norm{g}_{L^p}, \text{ if } p > 2  \\
& \norm{S g}_{L^p} \le C_p \norm{g}_{L^p} \\
& \lim_{p \to 2^+} C_p = 1 \\
& C_2 = 1.
\end{align*}
\label{TResults}
\end{lem}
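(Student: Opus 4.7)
The plan is to verify the six assertions by standard arguments from the theory of the Cauchy--Pompeiu and Beurling--Ahlfors transforms, with the main input being that $\frac{1}{\pi z}$ is a fundamental solution of $\bar{\del}$ on $\C$. Extending $g$ by zero off $\Om$, one writes $Tg\pr{z} = \pr{\frac{1}{\pi z} \ast g}\pr{z}$, after which $\bar{\del}\pr{Tg} = g$ falls out immediately in the distributional sense. For the second identity $\del\pr{Tg} = Sg$, formally differentiating the kernel gives $-\frac{1}{\pi z^2}$, which is not locally integrable, so the resulting operator must be interpreted as a Cauchy principal value. I would justify the identity by regularizing (for instance by replacing $1/z$ by $\bar z/\pr{\abs{z}^2 + \eps^2}$), differentiating classically off the singularity, and passing to the limit $\eps \to 0^+$; the limit exists almost everywhere precisely because $1/z^2$ is a Calderon--Zygmund kernel. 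This passage to the limit is the main technical obstacle.

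For the pointwise bound $\abs{Tg\pr{z}} \le c_p \norm{g}_{L^p}$ when $p > 2$, H\"older's inequality with conjugate exponent $p' = p/(p-1) < 2$ yields
\[
\abs{Tg\pr{z}} \le \frac{1}{\pi} \pr{\int_\Om \abs{\zeta - z}^{-p'} d\zeta}^{1/p'} \norm{g}_{L^p\pr{\Om}},
\]
and the first factor is finite on any bounded $\Om$ precisely because $p' < 2$; a polar-coordinate computation supplies an explicit $c_p$ depending only on $\diam \Om$ and $p$.

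For the $L^p$-boundedness of $S$, I would appeal to the Calderon--Zygmund theorem: the kernel $-\frac{1}{\pi z^2}$ is smooth away from the origin, is homogeneous of degree $-2$, and has mean zero on every circle centered at the origin, so it defines an $L^p$-bounded operator for every $1 < p < \iny$. To obtain $C_2 = 1$ I would compute the Fourier multiplier of $S$, which is $\bar\xi/\xi$ (writing the frequency variable as a complex number) and hence has modulus one; Plancherel then gives $\norm{Sg}_{L^2} = \norm{g}_{L^2}$. Finally, $\lim_{p \to 2^+} C_p = 1$ follows from the Riesz--Thorin interpolation theorem: the function $p \mapsto \log C_p$ is convex in $1/p$ on $\pr{1, \iny}$, is finite there, and vanishes at $p = 2$, so it is continuous from the right at $p = 2$.
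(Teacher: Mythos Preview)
Your sketch is correct. The paper, however, does not prove this lemma at all: it is stated as a collection of facts ``collected from \cite{Boj09}'' and used as a black box. So there is no proof in the paper to compare against; you have supplied what the authors chose to outsource.

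A couple of minor remarks on your write-up. For the continuity of $C_p$ at $p=2$, your reasoning is right but the logic could be stated more cleanly: Riesz--Thorin makes $1/p \mapsto \log C_p$ convex on the open interval $(0,1)$, a finite convex function on an open interval is automatically continuous on that interval, and $1/2$ lies in the interior; hence $C_p \to C_2 = 1$ as $p \to 2$ from either side. Also, for the pointwise bound on $Tg$ you correctly note that boundedness of $\Om$ is essential (so that $\abs{\zeta - z}^{-p'}$ is integrable over $\Om$); this is implicit in the paper's setup since $\Om$ is always a fixed ball $B_d$.
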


\begin{lem}[see Theorems 4.1, 4.3 \cite{Boj09}]
Let $w$ be a generalized solution (possibly admitting isolated singularities) to
\begin{equation*}
\bar \del w + q_1\pr{z} \del w + q_2\pr{z} \ol{\del w} = A\pr{z} w + B\pr{z} \bar w
\end{equation*}
in a bounded domain $\Omega \su \R^2$.
Assume that $\abs{q_1\pr{z}} + \abs{q_2\pr{z}} \le  \alpha_0 < 1$ in $\Om$, and $A$, $B$ belong to $L^t\pr{\Om}$ for some $t \ge 2$.
Then $w\pr{z}$ is given by
$$w\pr{z}= f\pr{z} e^{\phi\pr{z}},$$
where $f$ is a solution to
$$\bar \del f + q_0\pr{z} \del f = 0$$
and
$$\phi\pr{z} = T{\om}\pr{z}.$$
Here, $q_0$ is defined by
\begin{equation}
\label{q0}
q_0\pr{z} = \left\{\begin{array}{ll} 
q_1\pr{z} + q_2\pr{z} \frac{\ol{\del w}}{\del w} & \text{ for } \del w \ne 0  \\  
q_1\pr{z} + q_2\pr{z} & \text{ otherwise },
\end{array} \right.
\end{equation}
and $\om \in L^t\pr{\Om}$ solves 
\begin{equation}
\om + q_0 S \om = h
\label{intEq}
\end{equation}
with
$$h\pr{z} = \left\{\begin{array}{ll} 
A\pr{z} + B\pr{z} \frac{\bar w}{w} & \text{ for } w\pr{z} \ne 0 \text{ and } w\pr{z} \ne \iny \\  
A\pr{z} + B\pr{z} & \text{ otherwise }.
\end{array}\right.$$
\label{simPrinc}
\end{lem}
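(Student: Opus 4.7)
The plan is to reduce the stated Beltrami-type equation to a simpler first-order equation by absorbing the two inhomogeneities using $w$ itself, then factor $w$ as $f e^\phi$ where $f$ satisfies the homogeneous reduced equation and $\phi$ absorbs the zeroth-order term via the Cauchy transform $T$.

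First, I would observe that the definition of $q_0$ in \eqref{q0} is engineered so that
\[
q_1 \del w + q_2 \ol{\del w} = q_0 \, \del w
\]
pointwise (the case $\del w = 0$ forces $\ol{\del w}=0$, so both sides vanish). Similarly, the definition of $h$ gives $A w + B \bar w = h w$ wherever $w \ne 0$. Thus the original equation for $w$ reduces to
\[
\bar\del w + q_0 \del w = h w \qquad \text{in } \Om.
\]
Note that $\abs{q_0(z)} \le \abs{q_1(z)} + \abs{q_2(z)} \le \alpha_0 < 1$ and, since $\abs{\bar w / w} = 1$, we have $h \in L^t(\Om)$ with $\norm{h}_{L^t} \le \norm{A}_{L^t} + \norm{B}_{L^t}$.

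Second, I would seek the factorization $w = f e^{\phi}$ with $\phi = T\omega$ for some $\omega \in L^t(\Om)$ to be determined. A direct computation using Lemma~\ref{TResults}, namely $\bar\del(T\omega) = \omega$ and $\del(T\omega) = S\omega$, gives
\[
\bar\del\phi + q_0 \, \del\phi = \omega + q_0 \, S\omega.
\]
If we set $f := w e^{-\phi}$, then
\[
\bar\del f + q_0 \, \del f = e^{-\phi}\brac{\bar\del w + q_0 \del w - w\pr{\bar\del\phi + q_0 \del\phi}} = e^{-\phi} w\pr{h - \omega - q_0 S\omega}.
\]
So the factorization holds with $f$ satisfying $\bar\del f + q_0 \del f = 0$ provided $\omega$ solves the integral equation \eqref{intEq}, that is, $\omega + q_0 S\omega = h$.

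Third, I would solve \eqref{intEq} by a Neumann series on $L^t(\Om)$. Since $\abs{q_0} \le \alpha_0 < 1$ and the operator norm of $S$ on $L^t$ satisfies $\norm{S}_{L^t\to L^t} \le C_t$ with $C_2 = 1$ and $\lim_{t \to 2^+} C_t = 1$, we may pick (or shrink) $t \ge 2$ close enough to $2$ so that $\alpha_0 C_t < 1$. Then the operator $M \omega := -q_0 S\omega$ is a contraction on $L^t(\Om)$, and the Banach fixed point theorem yields a unique $\omega \in L^t(\Om)$ solving $\omega + q_0 S\omega = h$. This $\omega$ produces $\phi = T\omega$, which by Lemma~\ref{TResults} is well-defined on $\Om$ (bounded and even H\"older continuous when $t>2$), and in turn produces the desired factorization $w = f e^{\phi}$ with $f = w e^{-\phi}$.

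The main obstacle I anticipate is ensuring the integral equation is solvable at the required level of integrability: the cleanest route, a Neumann series, only works when $\alpha_0 C_t < 1$, so one must exploit the continuity $C_t \to 1$ as $t \to 2^+$ rather than taking $t$ arbitrarily large. A secondary technical point is handling the possible isolated singularities (and zeros) of $w$ referenced in the lemma: at zeros of $w$ the ratio $\bar w / w$ is ill-defined, but this only occurs on a measure-zero set by unique continuation (or may be set arbitrarily in the definition of $h$) and so does not affect the $L^t$-integrability of $h$ or the fixed-point argument. Once $\phi$ is produced, verifying that $f = we^{-\phi}$ solves $\bar\del f + q_0 \del f = 0$ is the direct computation above.
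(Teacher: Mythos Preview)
Your proposal is correct and follows essentially the same approach as the paper: reduce the equation to $\bar\del w + q_0 \del w = h w$ via the given definitions of $q_0$ and $h$, solve the integral equation $\om + q_0 S\om = h$ by a Neumann series in $L^p$ for some $p \in \brac{2,t}$ with $\alpha_0 C_p < 1$, then set $\phi = T\om$ and verify directly that $f = w e^{-\phi}$ satisfies $\bar\del f + q_0 \del f = 0$. The only cosmetic difference is that the paper phrases the exponent choice as picking $p \in \brac{2,t}$ (using that $\Om$ is bounded so $h \in L^p$), whereas you phrase it as shrinking $t$; these amount to the same thing.
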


The proof ideas are available in \cite{Boj09} and detailed arguments can be found in \cite{DKW17}.
We repeat the details here since we now include the case of $t = 2$.

\begin{proof}
Let $w\pr{z}$ be the generalized solution.
Set
$$h\pr{z} = \left\{\begin{array}{ll} 
A\pr{z} + B\pr{z} \frac{\bar w}{w} & \text{ for } w\pr{z} \ne 0 \text{ and } w\pr{z} \ne \iny \\  
A\pr{z} + B\pr{z} & \text{ otherwise }
\end{array}\right.$$
and
\begin{equation*}
q_0\pr{z} = \left\{\begin{array}{ll} 
q_1\pr{z} + q_2\pr{z} \frac{\ol{\del w}}{\del w} & \text{ for } \del w \ne 0  \\  
q_1\pr{z} + q_2\pr{z} & \text{ otherwise }.
\end{array} \right.
\end{equation*}
We have $\abs{q_0\pr{z}} \le \abs{q_1\pr{z}} + \abs{q_2\pr{z}} \le \alpha_0$. Consider the integral equation
\begin{equation*}
\om + q_0 S \om = h.
\end{equation*}
Let $p \in \brac{2, t}$ be such that $C_p q_0 < 1$.
That is, $\norm{q_0 S}_{L^p\pr{\Om} \to L^p\pr{\Om}} \le C_p q_0$.
Since $\Om$ is bounded, then $h \in L^p\pr{\Omega}$ and using a Neumann series or fixed point argument, we see that this integral equation has a unique solution $\om\pr{z} \in L^p$.
Set $\phi\pr{z} = T \om\pr{z}$, then define $f\pr{z} = w\pr{z} e^{- \phi\pr{z}}$.
A computation shows that $\bar \del f + q_0 \del f = 0$, as required.
\end{proof}

\begin{cor}
Let $w$ be a generalized solution (possibly admitting isolated singularities) to
\begin{equation*}
\bar \del w + q_1\pr{z} \del w + q_2\pr{z} \ol{\del w} = A\pr{z} w + B\pr{z} \bar w
\end{equation*}
in a bounded domain $\Omega \su \R^2$.
Assume that $\abs{q_1\pr{z}} + \abs{q_2\pr{z}} \le \alpha_0 < 1$ in $\Om$, and $A$, $B$ belong to $L^t\pr{\Om}$ for some $t > 2$.
Then $w\pr{z}$ is given by
$$w\pr{z} = f\pr{z} g\pr{z},$$
where $f$ is a solution to
$$\bar \del f + q_0\pr{z} \del f = 0$$
and
$$\exp\brac{-C \pr{ \norm{A}_{L^t\pr{\Omega}} +  \norm{B}_{L^t\pr{\Omega}}}} \le \abs{g\pr{z}} \le \exp\brac{C \pr{ \norm{A}_{L^t\pr{\Omega}} +  \norm{B}_{L^t\pr{\Omega}}}}.$$
\label{simCor}
\end{cor}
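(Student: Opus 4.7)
The plan is to invoke Lemma \ref{simPrinc} directly and then promote the $L^p$ control of $\omega$ from that lemma to a uniform pointwise bound on $\phi = T\omega$, which is where the strict inequality $t > 2$ becomes essential.

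By the similarity principle (Lemma \ref{simPrinc}), there exists $\omega \in L^p(\Omega)$ solving the integral equation \eqref{intEq} such that $w(z) = f(z) e^{\phi(z)}$, where $\phi(z) = T\omega(z)$ and $f$ satisfies $\bar\del f + q_0 \del f = 0$ with $q_0$ as in \eqref{q0}. Setting $g(z) = e^{\phi(z)}$ reduces the problem to a uniform bound $\abs{\phi(z)} \le C(\norm{A}_{L^t(\Omega)} + \norm{B}_{L^t(\Omega)})$.

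To obtain this bound, I would first choose the exponent $p$ carefully. Since $t > 2$ and $\lim_{p \to 2^+} C_p = 1$ (Lemma \ref{TResults}), I can pick $p \in (2, t]$ sufficiently close to $2$ so that $\alpha_0 C_p < 1$. Then $\norm{q_0 S}_{L^p \to L^p} \le \alpha_0 C_p < 1$, so the operator $I + q_0 S$ is invertible on $L^p(\Omega)$ by a Neumann series, giving
\[
\norm{\omega}_{L^p(\Omega)} \le \frac{1}{1 - \alpha_0 C_p} \norm{h}_{L^p(\Omega)}.
\]
Next, since $\abs{\bar w / w} = 1$ wherever $w \ne 0, \infty$, the definition of $h$ yields the pointwise bound $\abs{h(z)} \le \abs{A(z)} + \abs{B(z)}$, and Hölder's inequality on the bounded domain $\Omega$ gives
\[
\norm{h}_{L^p(\Omega)} \le \abs{\Omega}^{\frac{1}{p} - \frac{1}{t}} \pr{\norm{A}_{L^t(\Omega)} + \norm{B}_{L^t(\Omega)}}.
\]

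Finally, because $p > 2$, the pointwise estimate $\abs{T\omega(z)} \le c_p \norm{\omega}_{L^p}$ from Lemma \ref{TResults} applies \emph{uniformly} in $z \in \Omega$, yielding
\[
\abs{\phi(z)} = \abs{T\omega(z)} \le C \pr{\norm{A}_{L^t(\Omega)} + \norm{B}_{L^t(\Omega)}},
\]
with $C$ depending on $\alpha_0$, $p$, $t$, and $\abs{\Omega}$. Exponentiating yields the two-sided bound on $\abs{g} = e^{\mathrm{Re}\, \phi}$. The only nontrivial point is the choice of $p$ balancing two competing requirements — staying strictly above $2$ (to gain a uniform sup bound on $T\omega$) while remaining close enough to $2$ to keep $\alpha_0 C_p < 1$ — but the hypothesis $t > 2$ guarantees a valid choice exists.
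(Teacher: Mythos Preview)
Your proof is correct and follows essentially the same approach as the paper's own proof: both invoke Lemma \ref{simPrinc} to write $g = e^{T\omega}$, choose $p \in (2,t]$ close enough to $2$ that $\alpha_0 C_p < 1$, bound $\norm{\omega}_{L^p}$ by $\norm{h}_{L^p} \le C(\norm{A}_{L^t} + \norm{B}_{L^t})$, and then apply the pointwise estimate $\abs{T\omega(z)} \le c_p \norm{\omega}_{L^p}$ from Lemma \ref{TResults}. Your write-up simply makes explicit the Neumann series bound and the H\"older step that the paper compresses into the phrase ``as above.''
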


\begin{proof}
From the previous lemma, we have that $g\pr{z} = \exp\pr{T \om \pr{z}}$, where $\om$ is the unique solution to \eqref{intEq}.
As above, $\norm{\om}_{L^p} \le C \norm{h}_{L^p} \le C \norm{h}_{L^t}$.
It follows from the third fact in Lemma \ref{TResults} that since we may choose $p > 2$,
$$\abs{T \om \pr{z}} \le C \norm{h}_{L^p} \le C \brac{\norm{A}_{L^t\pr{\Omega}} +  \norm{B}_{L^t\pr{\Omega}}},$$
where $C$ depends on $\Omega$.
The conclusion follows.
\end{proof}

\section{The proof of Theorem \ref{OofV}}
\label{S5}

Here we present the proof of the first order of vanishing estimate, that of Theorem \ref{OofV}.
We follow the approach used previously in \cite{KSW15} and \cite{DKW17} and use the positive multiplier to transform the equation for $u$ into a divergence-free equation.

Let $u$ be a solution to \eqref{epde} in $B_d \su \R^2$.
That is,
$$- \di\pr{A \gr u + W_1 u} + W_2 \cdot \gr u + V u = 0 \quad \text{ in } \; B_d.$$
Conditions \eqref{pos1} and \eqref{pos2} in combination with the bounds on $W_1$, $W_2$, $V$ imply that Lemma \ref{phiLem} is applicable, and therefore there exists a positive function $\phi$ satisfying \eqref{phiBound} that weakly solves
$$- \di\pr{A^T\gr \phi + W_2 \phi} + W_1 \cdot \gr \phi + V \phi = 0  \quad \text{ in } \; B_d.$$ 

Set $b =- A^T \gr \Phi + W_1 - W_2$ and observe that
\begin{align*}
&\di \brac{ \phi \pr{ A\gr u + b u}}  \\
&= \di \brac{ \phi \pr{ A\gr u - u A^T \gr \Phi + u W_1 - u W_2 }}  \\
&=\di \pr{ \phi A\gr u - u A^T \gr \phi + u \phi W_1 - u \phi W_2 } \\
&=\gr \phi \cdot A \gr u + \phi \di\pr{A \gr u} - \gr u \cdot A^T \gr \phi - u \di\pr{A^T \gr \phi}+ \di \pr{ u \phi W_1 - u \phi W_2 } \\
&= \phi \brac{Vu + W_2 \cdot \gr u - \di\pr{W_1 u}} - u \brac{V \phi + W_1 \cdot \gr \phi - \di\pr{W_2 \phi}} + \di \pr{ u \phi W_1 - u \phi W_2 } \\
&=0.
\end{align*}
Therefore, the PDE for $u$ can be transformed into a divergence-free equation.
Let $v$ be the stream function associated to the vector $\phi \pr{A \gr u + b u}$ with $v\pr{0,0} = 0$.
That is, for every $\pr{x,y} \in B_d$, 
\begin{equation}
v\pr{x,y} = \int_0^1 \brac{- \phi \pr{a_{21} u_x + a_{22} u_y + b_2 u}\pr{tx,ty} x + \phi \pr{a_{11} u_x + a_{12} u_y + b_1 u }\pr{tx,ty}y} dt. 
\label{tildev}
\end{equation}
To verify the validity of \eqref{tildev}, we let $P = \pr{P_1, P_2} = \phi \pr{A \gr u + b u}$, then
\begin{equation*}
v\pr{x,y} = -\int_0^1P_2\pr{tx,ty} x \, dt +  \int_0^1 P_1\pr{tx,ty}y \, dt.
\end{equation*}
So we have
\begin{align*}
v_{x}\pr{x,y} 
&=  -\int_0^1 \del_1P_2\pr{tx,ty} t x \,dt - \int_0^1 P_2\pr{tx,ty} dt + \int_0^1\del_1P_1\pr{tx,ty} ty \, dt \\
&=  -\int_0^1 \del_1P_2\pr{tx,ty} t x \,dt - \int_0^1 P_2\pr{tx,ty} dt - \int_0^1\del_2P_2\pr{tx,ty} ty \, dt \\
&= - \int_0^1 \brac{\del_t P_2\pr{tx,ty} t + P_2\pr{tx,ty} } dt
= - \int_0^1 \del_t \brac{P_2\pr{tx,ty} t } dt 
= - P_2\pr{x,y}
\end{align*}
and
\begin{align*}
v_y\pr{x,y} 
&= - \int_0^1\del_2 P_2\pr{tx,ty} t x\,dt + \int_0^1 \del_2 P_1\pr{tx,ty} ty\,dt + \int_0^1 P_1\pr{tx,ty} dt \\
&=  \int_0^1\del_1 P_1\pr{tx,ty} t x\,dt + \int_0^1 \del_2 P_1\pr{tx,ty} ty\,dt + \int_0^1 P_1\pr{tx,ty} dt \\
&= \int_0^1 \brac{ \del_t P_1\pr{tx,ty} t + P_1\pr{tx,ty} } dt
= \int_0^1  \del_t \brac{P_1\pr{tx,ty} t} dt
= P_1\pr{x,y}.
\end{align*}
That is,
\begin{equation}
\left\{\begin{array}{rl}
{v}_y &= \phi \pr{a_{11} u_x + a_{12}  u_y + b_1 u} \\
-{v}_x &= \phi \pr{a_{21} u_x + a_{22} u_y + b_2 u} .
\end{array}  \right.
\label{streamFunc}
\end{equation}

\begin{lem}
For any $r$ and $\kappa > 1$ such that $\kappa r \le d$, there is a constant $C$, depending on $\la$, $\La$, $q_1$, $q_2$, $p$, and $\kappa$, for which 
$$\norm{v}_{L^1\pr{B_r}} \le C r^2 \pr{1 + r^{2 - \frac {4}{q_1} }K^2 } \norm{u}_{L^\iny\pr{B_{\be r}}}.$$
\label{tildevBd}
\end{lem}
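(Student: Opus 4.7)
The plan is to estimate $v$ pointwise along radial rays, then switch to polar coordinates and use Fubini to reduce the $L^1$-bound on $v$ to a weighted integral of the divergence-free flux $P := \phi(A\gr u + bu)$. Writing $(x,y) = r'(\cos\theta,\sin\theta)$ and changing variables via $\rho = tr'$ in \eqref{tildev} produces the pointwise bound $|v(r'\cos\theta,r'\sin\theta)| \le \int_0^{r'} |P(\rho\cos\theta,\rho\sin\theta)|\,d\rho$. Evaluating $\|v\|_{L^1(B_r)}$ in polar coordinates and swapping the order of the $r'$- and $\rho$-integrations yields
\[
\|v\|_{L^1(B_r)} \le \int_0^{2\pi}\int_0^r |P(\rho e^{i\theta})|\,\frac{r^2-\rho^2}{2}\,d\rho\,d\theta \le \frac{r^2}{2}\int_{B_r}\frac{|P(z)|}{|z|}\,dz.
\]

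Because $b = -A^T\gr\Phi + W_1 - W_2$ and $\phi\gr\Phi = \gr\phi$, combined with $\phi \le 1$ from Lemma \ref{phiLem}, the flux is dominated pointwise by
\[
|P| \le \La|\gr u| + \La|\gr\phi||u| + (|W_1|+|W_2|)|u|.
\]
For each of the four resulting contributions to $\int_{B_r}|P(z)|/|z|\,dz$ I apply H\"older's inequality, using the elementary computations $\||z|^{-1}\|_{L^{s'}(B_r)} \le Cr^{1-2/s}$ for $s > 2$ and $\||z|^{-1}\|_{L^{q_i'}(B_r)} \le Cr^{1-2/q_i}$.

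For the $|\gr u|$ term I select $s \in (2,\tau_0)$ and apply Lemma \ref{Cacc+} to $u$ to obtain $\|\gr u\|_{L^s(B_r)} \le Cr^{2/s-1}(1 + r^{2-4/q_1}K^2)\|u\|_{L^\infty(B_{\kappa r})}$; the $r$-powers cancel, and the $\|W_2\|_{L^{q_2}}$ and $\|V\|_{L^p}$ contributions are absorbed into the constant since, by hypothesis of Lemma \ref{phiLem}, these norms are bounded by fixed constants independent of $K$. The $|\gr\phi||u|$ term is handled identically by invoking the analog of Lemma \ref{Cacc+} for $\phi$, which solves \eqref{epde4}, together with $\|\phi\|_{L^\infty} \le 1$. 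The $|W_2||u|$ term is bounded by $Cr^{1-2/q_2}\|u\|_\infty$, a constant multiple of $\|u\|_\infty$. The $|W_1||u|$ term yields $CKr^{1-2/q_1}\|u\|_\infty$, which by Young's inequality $Kr^{1-2/q_1} \le \tfrac{1}{2}(1 + K^2r^{2-4/q_1})$ fits into the desired form.

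Summing the four pieces gives $\int_{B_r}|P(z)|/|z|\,dz \le C(1+r^{2-4/q_1}K^2)\|u\|_{L^\infty(B_{\kappa r})}$, and combining with the Fubini step proves the claim. The main obstacle is obtaining the correct $K^2$ exponent in the $|\gr\phi|$ contribution: a direct appeal to Corollary \ref{PhiepsBound} for $\gr\Phi$ would only produce $K^{1+\eps}$, so the argument must instead use a Caccioppoli-type estimate applied directly to $\phi$, exploiting that $\|\phi\|_{L^\infty} \le 1$ by Lemma \ref{phiLem}; this produces exactly the same $(1+r^{2-4/q_1}K^2)$ factor as for $u$ and matches the stated bound.
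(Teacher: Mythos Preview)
Your proof is correct and follows a slightly different but essentially equivalent route from the paper's. The paper stays in Cartesian coordinates: it bounds $\norm{v}_{L^1(B_r)} \le r\int_0^1\int_{B_r}|P(tz)|\,dz\,dt$, changes variables $z \mapsto tz$ to get $r\int_0^1 t^{-2}\int_{B_{tr}}|P|\,dt$, applies H\"older to pass to $\norm{P}_{L^{\tau_0}(B_r)}\int_0^1 t^{-2/\tau_0}\,dt$ (which converges since $\tau_0 > 2$), and then estimates $\norm{P_i}_{L^{\tau_0}(B_r)}$ all at once via Lemma~\ref{Cacc+} applied to both $u$ and $\phi$. Your polar-coordinate computation accomplishes the same thing but makes the singular weight $|z|^{-1}$ explicit, which lets you treat the four pieces of $P$ with their natural H\"older exponents ($s$ for the gradients, $q_i$ for the $W_i$ terms) rather than forcing everything into $L^{\tau_0}$. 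Both arguments hinge on the same integrability threshold (the paper's $\int_0^1 t^{-2/\tau_0}\,dt < \infty$ is exactly your $\||z|^{-1}\|_{L^{s'}(B_r)} < \infty$), and both ultimately rely on Lemma~\ref{Cacc+} for $u$ and for $\phi$ together with the smallness of $\norm{W_2}_{L^{q_2}}$, $\norm{V}_{L^p}$ to get the clean $(1 + r^{2-4/q_1}K^2)$ factor. Your remark that one must use Caccioppoli for $\phi$ directly rather than Corollary~\ref{PhiepsBound} is well taken and matches what the paper does.
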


\begin{proof}
As above, we use the notation
$$v\pr{x,y} = -\int_0^1P_2\pr{tx,ty} x \, dt +  \int_0^1 P_1\pr{tx,ty}y \, dt.$$
It follows that
\begin{align*}
\norm{ v}_{L^1\pr{B_r}}
&= \int_{B_r}\abs{-\int_0^1P_2\pr{tx,ty} x \, dt +  \int_0^1 P_1\pr{tx,ty}y \, dt} dz \\
&\le \int_{B_r} \int_0^1 \abs{P_2\pr{tx,ty} x } dt dz
+ \int_{B_r} \int_0^1 \abs{P_1\pr{tx,ty} y } dt dz \\
&\le r \int_{B_r} \int_0^1 \abs{P_2\pr{tx,ty}} dt dz
+ r \int_{B_r} \int_0^1 \abs{P_1\pr{tx,ty}} dt dz.
\end{align*}
A computation shows that
\begin{align*}
P_1 &= \pr{a_{11} u_x + a_{12} u_y} \phi - u \pr{a_{11} \phi_x + a_{21} \phi_y} + u \pr{W_{1,1}- W_{2,1}} \phi \\
P_2 &= \pr{a_{21} u_x + a_{22} u_y} \phi - u \pr{a_{12} \phi_x + a_{22} \phi_y} + u \pr{W_{1,2}- W_{2,2}} \phi,
\end{align*}
where we use the notation $W_i = \pr{W_{i,1}, W_{i,2}}$ for $i = 1, 2$.
By Lemma \ref{Cacc+} applied to $u$ and $\phi$, along with the assumption that each $W_i \in L^{q_i}$, it follows that each $P_i \in L^{\tau_0}$.
Interchanging the order of integration, applying H\"older's inequality, then simplifying, we see that
\begin{align*}
\int_{B_r} \int_0^1 \abs{P_1\pr{tx,ty}} dt dz
&= \int_0^1 \frac 1 {t^2} \int_{B_{tr}}  \abs{P_1\pr{x,y}} dz dt
\le \int_0^1 \frac 1 {t^2} \pr{\int_{B_{tr}} \abs{P_1\pr{x,y}}^{\tau_0} dz}^{\frac 1{\tau_0}} \abs{B_{tr}}^{1 - \frac{1}{\tau_0}}dt \\
&= C r^{2 - \frac{2}{\tau_0}} \int_0^1 \norm{P_1}_{L^{\tau_0}\pr{B_{tr}}} t^{ - \frac{2}{\tau_0}} dt
\le C r^{2 - \frac{2}{\tau_0}} \norm{P_1}_{L^{\tau_0}\pr{B_{r}}} \int_0^1 t^{ - \frac{2}{\tau_0}} dt.
\end{align*}
Since $\tau_0 > 2$, then $\disp \int_0^1 t^{ - \frac{2}{\tau_0}} dt$ converges and $\disp \int_{B_r} \int_0^1 \abs{P_1\pr{tx,ty}} dt dz \le C r^{2 - \frac{2}{\tau_0}} \norm{P_1}_{L^{\tau_0}\pr{B_{r}}}.$
A similar estimate holds for $\disp \int_{B_r} \int_0^1 \abs{P_2\pr{tx,ty}}^2 dt dz$, so we conclude that
\begin{align*}
\norm{v}_{L^1\pr{B_r}}
&\le C r^{3 - \frac 2 {\tau_0}} \pr{ \norm{P_1}_{L^{\tau_0}\pr{B_r}} + \norm{P_2}_{L^{\tau_0}\pr{B_r}}}.
\end{align*}
For $i = 1, 2$, with an application of \eqref{ABd} and Lemma \ref{Cacc+}, we see that
\begin{align*}
\norm{P_i}_{L^{\tau_0}\pr{B_r}} 
&\le \La \pr{\norm{\gr u}_{L^{\tau_0}\pr{B_r}} \norm{\phi}_{L^\iny\pr{B_r}} + \norm{u}_{L^\iny\pr{B_r}} \norm{\gr\phi}_{L^{\tau_0}\pr{B_r}}} \\
&+\pr{ \norm{W_{1,i}}_{L^{q_1}\pr{B_r}} \abs{B_r}^{\frac 1 {\tau_0} -\frac{1}{q_1}} + \norm{W_{2,i}}_{L^{q_2}\pr{B_r}} \abs{B_r}^{\frac 1 {\tau_0} -\frac{1}{q_2}}} \norm{u}_{L^\iny\pr{B_r}} \norm{\phi}_{L^\iny\pr{B_r}} \\
&\le C r^{\frac 2 {\tau_0} -1}  \pr{1 + r^{2 - \frac {4}{q_1} }K^2 } \norm{u}_{L^\iny\pr{B_{\kappa r}}}  \norm{\phi}_{L^{\iny}\pr{B_{\kappa r}}} \\
&+ \pr{K r^{\frac 2 {\tau_0} -\frac{2}{q_1}} +  r^{\frac 2 {\tau_0} -\frac{2}{q_2}}} \norm{u}_{L^\iny\pr{B_r}} \norm{\phi}_{L^\iny\pr{B_r}} \\
&\le C r^{\frac 2 {\tau_0} -1} \pr{1 + r^{2 - \frac {4}{q_1} }K^2 } \norm{u}_{L^\iny\pr{B_{\kappa r}}},
\end{align*}
where we have used the pointwise bounds on $\phi$ from Lemma \ref{phiLem}.
It follows that
\begin{align*}
\norm{v}_{L^1\pr{B_r}}
&\le C r^2  \pr{1 + r^{2 - \frac {4}{q_1} }K^2 }   \norm{u}_{L^\iny\pr{B_{\kappa r}}},
\end{align*}
as required.
\end{proof}

With $w = \phi u + i v$ and $D$ as defined in \eqref{DDef}, we see that in $B_d \Supset B_{\rho\pr{7/5}}$,
\begin{align}
D w &=  D \phi u + \phi Du + D (i v) \nonumber \\
&= D\pr{ \log \phi} \phi u 
+ \phi \brac{\frac{ \pr{a_{11} + \det A} + i {a_{21}} }{\det\pr{A + I}}u_x 
+ \frac{ {a_{12}} + i \pr{a_{22} + \det A}}{ \det\pr{A + I}} u_y } \nonumber \\
&+ \frac{\pr{a_{11} + 1} + i {a_{12}}}{\det\pr{A + I}} i v_x
+ \frac{ {a_{21}} + i\pr{a_{22}  + 1} }{ \det\pr{A + I}} i v_y \nonumber \\
&= \phi \brac{D\pr{ \log \phi} +  i  b_1\frac{ {a_{21}} + i\pr{a_{22}  + 1} }{ \det\pr{A + I}} - i b_2 \frac{\pr{a_{11} + 1} + i {a_{12}}}{\det\pr{A + I}} } u \nonumber \\
&+ \phi \brac{\frac{ \pr{a_{11} + \det A} + i {a_{21}} }{\det\pr{A + I}} + \frac{-ia_{21}\pr{a_{11} + 1} + {a_{12}a_{21}}}{\det\pr{A + I}} + \frac{ i{a_{21}}a_{11}  -a_{11}  \pr{a_{22}  + 1} }{ \det\pr{A + I}} } u_x \nonumber \\
&+ \phi \brac{ \frac{ {a_{12}} + i \pr{a_{22} + \det A}}{ \det\pr{A + I}}+ \frac{-ia_{22}\pr{a_{11} + 1} + {a_{12}a_{22}}}{\det\pr{A + I}} + \frac{ ia_{12} {a_{21}} - a_{12}\pr{a_{22}  + 1} }{ \det\pr{A + I}}  } u_y \nonumber \\
&= \pr{\al + \be_1 - \be_2} \pr{w + \bar w},
\label{diffEq}
\end{align}
where, recalling that we set $\Phi = \log \phi$,
\begin{align*}
\al + \be_1 - \be_2 &= \frac 1 2 D \Phi 
+ \frac{ b_2 a_{12}- b_1\pr{a_{22}  + 1} + i b_1 a_{21}- i b_2\pr{a_{11} + 1} }{ 2\det\pr{A + I}}.
\end{align*}
That is, 
\begin{align}
\al &=\frac{ 2a_{11} \pr{1 + a_{22} } - \pr{a_{12}+a_{21}} a_{12} 
+ i \brac{\pr{a_{12} + a_{21}} + a_{11}\pr{a_{12}- a_{21}}}}{4\det\pr{A + I}}\Phi_x  \nonumber \\
&+ \frac{ \pr{a_{12} + a_{21}} - a_{22}\pr{a_{12}- a_{21}} + i \brac{2a_{22}\pr{1 + a_{11}} - \pr{a_{12}+ a_{21}} a_{21}}}{ 4\det\pr{A + I}} \Phi_y 
\label{alDef} \\
\be_j  &= \frac{- W_{j,1} \pr{a_{22}  + 1} + W_{j,2} a_{12} - i W_{j,2} \pr{a_{11} + 1} +  i W_{j,1} a_{21}}{ 2\det\pr{A + I}} \qquad \text{ for } \, j = 1, 2.
\label{beDef}
\end{align}
It follows from the boundedness of $A$ described by \eqref{ABd} in combination with Corollary \ref{PhiepsBound}, that for any $\eps > 0$, there exists $t > 2$ such that
\begin{equation*}
\norm{\al}_{L^t\pr{B_{\rho\pr{7/5}}}} \le C K^{1 + \eps}.
\end{equation*}
The boundedness of $A$ along with the assumptions on $W_1$ and $W_2$ implies that
\begin{align*}
\norm{\be_1}_{L^{q_1}\pr{B_{\rho\pr{7/5}}}} \le C K \\
\norm{\be_2}_{L^{q_2}\pr{B_{\rho\pr{7/5}}}} \le C .
\end{align*}
We now apply the similarity principle given in Lemma \ref{simPrinc} and Corollary \ref{simCor} to conclude that any solution to \eqref{diffEq} in $B_{\rho\pr{7/5}}$ is a function of the form
$$w\pr{z} = f\pr{z} g\pr{z},$$
with
$$D_w f = 0 \;\; \text{ in } \; B_{\rho\pr{7/5}},$$
and for a.e. $z \in B_{\rho\pr{7/5}}$,
\begin{align*}
&\exp\brac{-C \pr{ \norm{\al}_{L^t\pr{B_{\rho\pr{7/5}}}} +  \norm{\be_1}_{L^{q_1}\pr{B_{\rho\pr{7/5}}}}+  \norm{\be_2}_{L^{q_2}\pr{B_{\rho\pr{7/5}}}}}} \\
&\le \abs{g\pr{z}} \le \exp\brac{C \pr{ \norm{\al}_{L^t\pr{B_{\rho\pr{7/5}}}} +  \norm{\be}_{L^q\pr{B_{\rho\pr{7/5}}}}+  \norm{\be_2}_{L^{q_2}\pr{B_{\rho\pr{7/5}}}}}}.
\end{align*}
That is,
\begin{align}
\exp\pr{-C K^{1 + \eps} } \le & \abs{g\pr{z}} \le \exp\pr{C K^{1 + \eps}} \;\; \text{ in } B_{\rho\pr{7/5}},
\label{gBnd}
\end{align}
where we have used the bounds on $\al$ and $\be_i$ from above.
By Corollary \ref{3circle}, the Hadamard three-quasi-circle theorem, applied to the operator $D_w$, 
\begin{align*}
\norm{f}_{L^\iny\pr{Q_{s_1}}} \le \pr{\norm{f}_{L^\iny\pr{Q_{s/4}}}}^\te \pr{\norm{f}_{L^\iny\pr{Q_{s_2}}}}^{1 - \te},
\end{align*}
where $s < s_1 < s_2 < \frac{7}{5}$ and 
$$\te = \frac{\log\pr{s_2/s_1}}{\log\pr{4 s_2/s}}.$$
Let $r/4 = \rho\pr{s/4}$ and $r_2 = \rho\pr{s_2}$ so that $Q_{s/4} \su B_{r/4}$ and $Q_{s_2} \su B_{r_2}$.
We choose $r_3 \in \pr{r_2, \rho\pr{7/5}}$ so that $r_3-r_2 \sim 1$ and $\rho\pr{7/5} - r_3 \sim 1$.
Since $f$ is a solution to an elliptic equation (see Lemma \ref{rpLem}), then standard interior estimates for elliptic equations (see, for example, \cite[Theorem~4.1]{HL11}) imply that
\begin{align}
\norm{f}_{L^\iny\pr{Q_{s_1}}} \le C \pr{r^{-2} \norm{f}_{L^1\pr{B_{r/2}}}}^\te \pr{\norm{f}_{L^1\pr{B_{r_3}}}}^{1 - \te},
\label{threeBall}
\end{align}
where $C$ is an absolute constant.
Substituting $f = w g^{-1}$ into \eqref{threeBall} and applying \eqref{gBnd}, we see that
\begin{align}
\norm{ w}_{L^\iny\pr{Q_{s_1}}} \le \exp\pr{C K^{1 + \eps}} \pr{r^{-2}\norm{w}_{L^1\pr{B_{r/2}}}}^\te \pr{\norm{ w}_{L^1\pr{B_{r_3}}}}^{1 - \te}.
\label{3balls2}
\end{align}
Since $w = \phi u+ i v$, then
\begin{align*}
\abs{\phi u} \le \abs{w} \le \abs{\phi u} +\abs{v}.
\end{align*}
An application of \eqref{phiBound} and Lemma \ref{tildevBd} with $\kappa = 2$ shows that
\begin{align*}
\norm{w}_{L^1\pr{B_{r/2}}}
&\le \norm{\phi u}_{L^1\pr{B_{r/2}}} + \norm{v}_{L^1\pr{B_{r/2}}}
\le C r^2 \pr{1 + r^{2 - \frac {4}{q_1} }K^2 } \norm{u}_{L^\iny\pr{B_{r}}}.
\end{align*}
We similarly conclude that
\begin{align*}
\norm{w}_{L^1\pr{B_{r_3}}}
\le C \pr{1 + K^2 } \norm{u}_{L^\iny\pr{B_{d}}}
\le \exp\pr{C K},
\end{align*}
where we have applied \eqref{localBd} in the second inequality.
Upon setting $s_1 = 1$ in \eqref{3balls2} and using the bounds established above, we have
\begin{align*}
\norm{u}_{L^\iny\pr{Q_{1}}} 
&\le \exp\pr{C K^{1 + \eps}} \norm{ u}_{L^\iny\pr{B_{r}}}^\te .
\end{align*}
Now we define 
\begin{equation}
\label{bsigma}
b = \si\pr{1}
\end{equation}
so that $B_b \su Q_1$.
Since $\norm{u}_{L^\iny\pr{Q_1}} \ge \norm{u}_{L^\iny\pr{B_b}} \ge 1$ by \eqref{localNorm}, after rearranging, we have
\begin{equation*}
\norm{ u}_{L^\iny\pr{B_r}} \ge \exp\pr{-\frac C \te K^{1 + \eps}}.
\end{equation*}
It follows from the definition of $\te$ that
\begin{align*}
\norm{ u}_{L^\iny\pr{B_{r}}} \ge r^{C K^{1 + \eps}},
\end{align*}
and the conclusion of Theorem \ref{OofV} follows.

\section{The proof of Theorem \ref{OofV1}}
\label{S6}

\subsection{The case of $q > 2$}

When $W_2, V \equiv 0$, the proof above carries through with $\phi = 1$.
Since there is no need to construct a positive multiplier using Lemma \ref{phiLem}, the positivity condition on $W_1$ described by \eqref{pos1} is unnecessary.
In this simplified setting, we see that $\al$ given in \eqref{alDef} is equal to zero and then \eqref{gBnd} holds with $\eps = 0$.
The remainder of the proof is unchanged and the estimate \eqref{localEst} therefore holds with $\eps = 0$.

\subsection{The case of $q = 2$}

Here we need to prove that the strong unique continuation property (SUCP) holds for solutions to $-\di\pr{A \gr + Wu} = 0$ when $W \in L^2\pr{B_d}$.
Recall the following definition of SUCP:

\begin{defn}
Suppose $u \in W^{1,2}_{\loc}\pr{B_d}$ is a solution to $-\di\pr{A \gr u + W u} = 0$.
We say that the {\em strong unique continuation property} holds if whenever $u$ vanishes to infinite order at some point $z_0 \in B_d$, i.e. for every $N \in \N$,
$$\abs{u\pr{z}} \le \mathcal{O}\pr{\abs{z - z_0}^N} \quad \text{ as } z \to z_0,$$
this implies that $u \equiv 0$.
\end{defn}

Assume that $z_0 = 0$.
That is, we assume that $u$ vanishes to infinite order at $0$ and we will prove that $u \equiv 0$ in $B_d$.
Let $v$ be the stream function associated to $-\di\pr{A \gr u + W u} = 0$ defined by
\begin{equation*}
v\pr{x,y} = \int_0^1 \brac{- \pr{a_{21} u_x + a_{22} u_y + W_2 u}\pr{tx,ty} x + \pr{a_{11} u_x + a_{12} u_y + W_1 u }\pr{tx,ty}y} dt. 
\end{equation*}
Here we write $W=(W_1,W_2)$. With $w = u + i v$ and $D$ as defined in \eqref{DDef}, we see that in $B_d$,
\begin{align*}
D w &= \be w,
\end{align*}
where
\begin{align*}
\be  &= \left\{\begin{array}{ll}
\frac{- W_{1} \pr{a_{22}  + 1} + W_{2} a_{12} - i W_{2} \pr{a_{11} + 1} +  i W_{1} a_{21}}{ 2\det\pr{A + I}}\pr{1 + \frac{\overline w}{w}} & w \ne 0 \\
0 & \text{otherwise} \end{array}\right..
\end{align*}
It follows from \eqref{ellip}, \eqref{ABd}, and the bound on $W$ that $\norm{\be}_{L^2\pr{B_d}} \le C K$.
The similarity principle given in Lemma \ref{simPrinc} implies that 
$$w = f\pr{z} g\pr{z},$$
with
$$D_{w} f = 0 \;\; \text{ in } \; B_d,$$
and 
$$g\pr{z} = \exp\pr{T \om\pr{z}},$$
where 
$\om \in L^2$ with $\norm{\om}_{L^2\pr{B_d}} \le C \norm{\be}_{L^2\pr{B_d}} \le C K$.
As
\begin{equation*}
h\pr{z} :=T \om (z)=\frac{1}{\pi}\int_{B_d}\frac{\om(\xi)}{\xi-z}d\xi,
\end{equation*}
we have that
$$\norm{h}_{W^{1,2}\pr{B_d}}=\norm{h}_{L^2\pr{B_d}}+ \norm{\gr h}_{L^2\pr{B_d}}\le CK.$$
Since we do not have $T \om \in L^\iny$ for $\om \in L^2$, we rely on the following result from \cite{KW15}.

\begin{lem}[cf. Lemma 3.3 in \cite{KW15}]
\label{lemma0701}
Let $h$ be as defined above.
For $s>0$ and $0<r\le \rho\pr{7/5}$, we have that
\begin{equation}\label{qqest}
\fint_{B_r}\exp(s|h|)\le Cr^{-sCK}\exp(sCK+s^2CK^2).
\end{equation}
\end{lem}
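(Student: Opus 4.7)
The plan is to split $\om$ into its near-field and far-field parts relative to $B_r$: set $\om_1 = \om \chi_{B_{2r}}$, $\om_2 = \om - \om_1$, and correspondingly decompose $h = h_1 + h_2$ with $h_i = T\om_i$. I would bound $\exp(s|h_2|)$ pointwise on $B_r$ and $\fint_{B_r}\exp(s|h_1|)$ in the averaged sense via a Trudinger--Moser argument, then multiply.

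For the far-field piece, whenever $z \in B_r$ and $|\xi| \ge 2r$ we have $|\xi - z| \ge |\xi|/2$, so Cauchy--Schwarz yields
\[
|h_2(z)| \le \frac{2}{\pi}\int_{2r \le |\xi| \le d}\frac{|\om(\xi)|}{|\xi|}\,d\xi \le CK\sqrt{\log(d/r)} \le CK\pr{1 + \log(d/r)},
\]
where the last inequality uses $\sqrt{x} \le 1 + x$ for $x \ge 0$. Since $d$ is a fixed constant depending only on $\la, \La$, this delivers the uniform pointwise bound $\exp(s|h_2(z)|) \le r^{-sCK}\exp(sCK)$ on $B_r$.

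For the near-field piece the approach is to reduce to Trudinger--Moser. Since $\bar\del h_1 = \om_1$ and $\del h_1 = S\om_1$ with $S$ the Beurling transform bounded on $L^2\pr{\R^2}$, one has $\norm{\gr h_1}_{L^2\pr{\R^2}} \le CK$. Using a smooth cutoff $\eta$ with $\eta \equiv 1$ on $B_{3r}$, $\supp \eta \su B_{4r}$, $|\gr \eta| \le C/r$, the truncation $\tilde h_1 := \eta h_1$ lies in $W^{1,2}_0\pr{B_{4r}}$, and the cutoff error is governed by $\norm{h_1}_{L^2\pr{B_{4r}}}/r$. To bound this I would invoke the Hardy--Littlewood--Sobolev inequality for $T\colon L^{4/3}\pr{\R^2} \to L^4\pr{\R^2}$ together with $\norm{\om_1}_{L^{4/3}} \le CKr^{1/2}$ (H\"older on $B_{2r}$), yielding $\norm{h_1}_{L^4\pr{\R^2}} \le CKr^{1/2}$ and hence $\norm{h_1}_{L^2\pr{B_{4r}}} \le CKr$. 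This gives $\norm{\gr \tilde h_1}_{L^2} \le CK$. Applying the Trudinger--Moser inequality to $\tilde h_1/\pr{CK}$ yields $\fint_{B_r}\exp\pr{c|h_1|^2/K^2} \le C$, and the elementary inequality $s|h_1| \le s^2K^2/\pr{4c} + c|h_1|^2/K^2$ then produces $\fint_{B_r}\exp\pr{s|h_1|} \le C\exp\pr{Cs^2K^2}$.

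Combining the two pieces via $|h| \le |h_1| + |h_2|$ gives
\[
\fint_{B_r}\exp(s|h|) \le \norm{\exp(s|h_2|)}_{L^\iny\pr{B_r}}\fint_{B_r}\exp(s|h_1|) \le C r^{-sCK}\exp\pr{sCK + Cs^2K^2},
\]
as claimed. The main obstacle is the Trudinger--Moser step --- specifically, establishing $\norm{\gr \tilde h_1}_{L^2} \le CK$ after cutoff, which forces the auxiliary $L^2$ bound on $h_1$ in $B_{4r}$ via Hardy--Littlewood--Sobolev. It is instructive that the two pieces of the decomposition contribute distinctly to the final form: the far-field is responsible for the $r^{-sCK}$ factor through the logarithmic growth of $|h_2|$, while the near-field generates the quadratic $s^2K^2$ dependence through Trudinger--Moser.
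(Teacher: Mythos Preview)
Your argument is correct. The paper does not supply its own proof of this lemma, deferring instead to Lemma~3.3 of \cite{KW15}; your near-field/far-field decomposition---with the far-field $h_2$ controlled pointwise on $B_r$ via Cauchy--Schwarz and the near-field $h_1$ handled by Trudinger--Moser after establishing $\norm{\gr \tilde h_1}_{L^2} \le CK$ through the Hardy--Littlewood--Sobolev bound $\norm{h_1}_{L^2(B_{4r})} \le CKr$---is exactly the standard route and coincides with the argument in \cite{KW15}.
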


Now we demonstrate how a modification of the ideas in \cite{KW15} leads to the proof of our theorem.
Following the arguments in the proof of Theorem \ref{OofV}, we see that \eqref{threeBall} holds with $f = w \, \exp\pr{-h}$.
That is,
\begin{align}
\norm{w \, \exp\pr{-h}}_{L^\iny\pr{Q_{s_1}}} 
&\le C \pr{r^{-2} \norm{w \, \exp\pr{-h}}_{L^1\pr{B_{r/2}}}}^\te 
\pr{\norm{w \, \exp\pr{-h}}_{L^1\pr{B_{r_3}}}}^{1 - \te},
\label{3ballw}
\end{align}
where $r/4 = \rho\pr{s/4}$, $r_2 = \rho\pr{s_2}$ and $r_3 \in \pr{r_2, \rho\pr{7/5}}$ is such that $r_3-r_2 \sim 1$ and $\rho\pr{7/5} - r_3 \sim 1$. 
With $s_1 = 1$, we have
\begin{align}
\norm{u}_{L^2\pr{B_{b}}} 
&\le C \norm{w}_{L^2\pr{Q_{s_1}}} 
\le C \norm{w \, \exp\pr{-h}}_{L^\iny\pr{Q_{s_1}}} \norm{\exp\pr{\abs{h}}}_{L^2\pr{Q_{s_1}}} \nonumber \\
&\le \exp(CK^2) \norm{w \, \exp\pr{-h}}_{L^\iny\pr{Q_{s_1}}},
\label{1ballEst}
\end{align}
where the second inequality follows from an application of H\"older's inequality, and the third follows from Lemma \ref{lemma0701}.
To bound the righthand side of \eqref{3ballw}, an application of Lemma \ref{lemma0701} shows that
\begin{align}
r^{-2} \norm{w \, \exp\pr{-h}}_{L^1\pr{B_{r/2}}}
&\le r^{-2} \int_{B_{r/2}} \abs{w} \exp\pr{\abs{h}} \nonumber \\
&\le C r^{-1} \pr{\int_{B_{r/2}} \abs{w}^2}^{1/2} \pr{\fint_{B_{r/2}}\exp\pr{2\abs{h}}}^{1/2} \nonumber \\
&\le C r^{-1} r^{-CK}\exp(CK+CK^2) \pr{ \norm{u}_{L^2\pr{B_r}} +  \norm{v}_{L^2\pr{B_{r/2}}}} .
\label{RHSEst}
\end{align}
Next we need to estimate $\norm{v}_{L^2\pr{B_r}}$.
Since $v\pr{0} = 0$, then
\begin{align}
\int_{B_{r/2}} \abs{v\pr{z}}^2
&= \int_{B_{r/2}} \abs{v\pr{z} - v\pr{0}}^2
= \int_{B_{r/2}} \abs{\int_0^1 \gr v\pr{t z} \cdot z \, dt}^2 dz \nonumber \\
&\le \pr{r/2}^2 \int_{B_{r/2}} \int_0^1 \abs{\gr v\pr{t z}}^2  dt dz \nonumber \\
&= Cr^3 \int_0^{r/2} \pr{\frac{1}{|B_s|}\int_{B_s}|\gr v\pr{z} |^2 dz } ds.
\label{L2vEst0}
\end{align}
As 
\begin{align*}
{v}_y &=  a_{11} u_x + a_{12}  u_y + W_1 u \\
-{v}_x &= a_{21} u_x + a_{22} u_y + W_2 u ,
\end{align*}
then \eqref{ABd} implies that
\begin{align}
\int_{B_{r/2}} \abs{v\pr{z}}^2
&\le Cr^3 \int_0^{r/2} \pr{\frac{1}{|B_s|}\int_{B_s}|\gr u |^2 + \abs{W u}^2  } ds 
\le Cr^3 \int_0^{r/2} \pr{\frac{K^4 \norm{u}_{L^\iny\pr{B_{\al s}}}^2}{s^2|B_s|}   } ds,
\label{L2vEst}
\end{align}
where we have used Caccioppoli's type inequality (see Lemma \ref{Cacc+}).
Since $u$ vanishes to infinite order at $0$, there exists $R_1 < d$ and $C_1 > 0$ so that
$$\abs{u\pr{z}} \le C_1 \abs{z}^2 \quad \text{ for all } \abs{z} < R_1.$$
Choosing $\al$ so that $\al r_3 = \rho\pr{7/5}$, it follows from the computations above that
\begin{align*}
\int_{B_{r_3}} \abs{v\pr{z}}^2
\le CK^4 \brac{\int_0^{R_1/\al} \pr{\frac{ \norm{u}_{L^\iny\pr{B_{\al s}}}^2}{s^2|B_s|}   } ds + \int_{R_1/\al}^{r_3} \pr{\frac{\norm{u}_{L^\iny\pr{B_{\al s}}}^2}{s^2|B_s|}   } ds}
\le \exp\pr{C K}
\end{align*}
and therefore, for any $\te \in \pr{0,1}$,
\begin{align}
\norm{w \, \exp\pr{-h}}_{L^1\pr{B_{r_3}}}^{1 - \te} \le \exp\pr{C K^2}.
\label{r3Est}
\end{align}
Now we assume that $\norm{u}_{L^2\pr{B_b}} \ge \exp\pr{-k}$ for some $k > 0$, then combine \eqref{1ballEst}, \eqref{3ballw}, and \eqref{r3Est} with the definition of $\te$ to conclude that
\begin{align*}
C r^{\tilde C \pr{k + C K^2}}
&\le  r^{-2} \norm{w \, \exp\pr{-h}}_{L^1\pr{B_{r/2}}}.
\end{align*}
Using that $u$ vanishes to infinite order at $0$, estimates \eqref{RHSEst}, \eqref{L2vEst0}, and \eqref{L2vEst} may be combined to conclude that there exists $N_0 > \tilde C \pr{k + C K^2}$ and $r_{N_0} > 0$ so that for any $r \le r_{N_0}$,
$$r^{-2} \norm{w \, \exp\pr{-h}}_{L^1\pr{B_{r/2}}} \le C_{N_0} r^{N_0}.$$
As this leads to a contradiction, we must have $\norm{u}_{L^2\pr{B_b}} < \exp\pr{-k}$ for every $k > 0$.
This means that $u \equiv 0$ in $B_b$.

In the case where $z_0 \ne 0$, a translation and scaling allows us to reduce to the case of $z_0 = 0$.

\section{The proof of Theorem \ref{OofV2}}
\label{S7}

For the proof of Theorem \ref{OofV2}, we follow the ideas used previously in \cite{DKW17} to rewrite the equation as a product of Beltrami operators, then we invoke a number of the ideas that were used above in the proof of Theorem \ref{OofV}.

Let $u$ be a solution to \eqref{epde3} in $B_d \su \R^2$.
That is,
$$- \di\pr{A \gr u} + W \cdot \gr u = 0 \quad \text{ in } \; B_d.$$
Dividing this equation through by $\sqrt{\mbox{det} A}$ gives
\begin{equation*}
\di\pr{\frac{A}{\sqrt{\mbox{det}A}}\gr u}-\widetilde{W}\cdot\gr u=0,
\end{equation*}
where
\begin{equation}\label{WW}
\widetilde W=A\gr\left(\frac{1}{\sqrt{\mbox{det}A}}\right)+\frac{W}{\sqrt{\mbox{det}A}}.
\end{equation}
Since $\norm{W}_{L^q\pr{B_d}} \le K$, then conditions \eqref{ellip}, \eqref{ABd}, and \eqref{gradDec} imply that $\|\widetilde W\|_{L^q\pr{B_d}} \le C K$, where $C$ depends on $\la$, $\La$, and $\mu$.
Moreover, the ellipticity constant of $A/\sqrt{\mbox{det}A}$ is $\lambda^2$.
Thus, there is no loss of generality in assuming that $u$ is a solution to \eqref{epde3} where $A$ is symmetric with determinant equal to $1$.

When $A$ is symmetric and has determinant equal to $1$, the term $\nu$ defined in \eqref{nuDef} is equal to zero and the definition of $D$ is greatly simplified.
Further, we have the following decomposition result from \cite{DKW17}.

\begin{lem}[Lemma 4.4 in \cite{DKW17}]
Assume that $A$ is uniformly elliptic, bounded, symmetric, Lipschitz continuous and has determinant equal to $1$.
Then the operator $\di\pr{A \gr}$ may be decomposed as
$$\di\pr{A \gr} = \pr{D + \Ga} \widetilde D,$$
where
\begin{align*}
D &= \frac{\pr{a_{11}  + 1} + i a_{12}}{\det\pr{A + I}} \del_x + \frac{  a_{12} + i \pr{a_{22} + 1}}{\det\pr{A + I}}  \del_y \\
\widetilde D &= \brac{1 + a_{11} - i a_{12}} \del_x + \brac{a_{12} - i\pr{1+a_{22}}} \del_y 
= \det\pr{A+I} \overline D  \\
\Ga 
&= \frac{\pr{\al \del_x a_{11} - \be \del_x a_{12} + \ga \del_y a_{11} + \de \del_y a_{12}} + i \pr{\ga \del_x a_{11} + \de \del_x a_{12} - \al \del_y a_{11} + \be \del_y a_{12}}}{ a_{11} \det\pr{A+I}^2},
\end{align*}
with 
\begin{align*}
&\al = a_{11} + a_{22} + 2 a_{11}a_{22} \qquad
\be = 2 a_{12}\pr{1 + a_{11}} \\
&\ga = a_{12}\pr{a_{22} - a_{11}} \qquad\qquad
\de = \pr{1 + a_{11}}^2 - a_{12}^2.
\end{align*}
Moreover, $\norm{\Ga}_{L^\iny} \le C\pr{\la, \La, \mu}$.
\label{decompLem}
\end{lem}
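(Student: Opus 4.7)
The plan is to verify the factorization by a direct computation in real coordinates, using the normalization $\det A = 1$ throughout. The first step is to record the algebraic identities that follow from this normalization: $\det(A+I) = 2 + a_{11} + a_{22}$, together with
\[
(1+a_{11})^2 + a_{12}^2 = a_{11}\det(A+I), \qquad a_{12}^2 + (1+a_{22})^2 = a_{22}\det(A+I).
\]
When I compute the principal (second-order) part of $D\widetilde D u$ by treating coefficients as frozen, these identities force the result to be exactly $a_{11}u_{xx} + 2a_{12}u_{xy} + a_{22}u_{yy}$, while the $u_{xy}$ cross term works out thanks to $a_{12}=a_{21}$ and the fact that $2q(p+r)/\det(A+I) = 2a_{12}$ under the notation $p = 1+a_{11}$, $q = a_{12}$, $r = 1+a_{22}$. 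This already matches the principal part of $\di(A\gr u)$, so only a first-order discrepancy remains to absorb into $\Gamma\widetilde D u$.

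Next, I would expand $D\widetilde D u$ completely via the product rule and collect the first-order terms into an expression $\alpha\, u_x + \beta\, u_y$ with complex-valued $\alpha,\beta$ that are rational in the $a_{ij}$ and linear in their partial derivatives. The discrepancy $\mathcal{E} := \di(A\gr u) - D\widetilde D u$ is therefore a first-order operator, and the identity I need to establish is $\mathcal{E} = \Gamma\,\widetilde D u$ for a single scalar complex function $\Gamma$. This requires the two coefficients appearing in $\mathcal{E}$ to be proportional to $(1+a_{11}-ia_{12})$ and $(a_{12}-i(1+a_{22}))$ in a common ratio, namely $\Gamma$.

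The main obstacle is verifying this proportionality, and this is where the constraint $\det A = 1$ enters a second time, in differentiated form:
\[
a_{22}\,\del_i a_{11} + a_{11}\,\del_i a_{22} - 2 a_{12}\,\del_i a_{12} = 0 \qquad (i=1,2).
\]
These two relations are precisely what kill the obstructions to proportionality; after substituting them, the two scalar equations for $\Gamma$ (one from the coefficient of $u_x$, one from $u_y$) collapse to the same expression. Simplifying via the identities from the first step then yields the closed form asserted in the statement, with quadratic coefficients $\alpha,\beta,\gamma,\delta$ in the entries of $A$ and denominator $a_{11}\det(A+I)^2$. The pointwise bound $\norm{\Gamma}_{L^\iny} \le C(\la,\La,\mu)$ is then immediate: \eqref{ellip} combined with $\det A = 1$ yields $a_{11}\ge \la$ and $\det(A+I) \ge 1 + 2\la$, which bounds the denominator away from zero, while \eqref{ABd} and \eqref{gradDec} bound the numerator in the supremum norm.
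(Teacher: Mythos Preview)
Your approach is correct and is the natural one: a direct computation of $D\widetilde D u$ via the product rule, splitting into the principal part (matched using the algebraic identities coming from $\det A = 1$) and the first-order remainder (shown to be a scalar multiple of $\widetilde D u$ using the differentiated constraint $a_{22}\del_i a_{11} + a_{11}\del_i a_{22} - 2a_{12}\del_i a_{12} = 0$). The paper does not actually prove this lemma; it is quoted verbatim from \cite{DKW17}, where the proof is precisely this kind of direct verification, so there is nothing substantively different to compare. One small arithmetic slip: $\det(A+I) = 2 + a_{11} + a_{22} \ge 2 + 2\la$, not $1+2\la$, but this of course does not affect the conclusion.
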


\begin{lem}[cf. Lemma 7.1 in \cite{DKW17}]
There exists $\widetilde\Upsilon \in L^q\pr{B_d}$ so that 
\begin{equation}
W \cdot \gr u =\widetilde\Upsilon \widetilde D u.
\label{UpsEqn}
\end{equation}
Moreover,
\begin{equation}
\|\widetilde\Upsilon\|_{L^q\pr{B_{d}}} \le C K.
\label{UpsBd}
\end{equation}
\end{lem}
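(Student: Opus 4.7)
The plan is to define $\widetilde\Upsilon$ by a pointwise quotient, setting
\[
\widetilde\Upsilon(z) = \begin{cases} (W \cdot \gr u)/\widetilde D u & \text{if } \widetilde D u \ne 0, \\ 0 & \text{if } \widetilde D u = 0, \end{cases}
\]
so that \eqref{UpsEqn} holds automatically on the set where $\widetilde D u \ne 0$. The entire argument then reduces to two tasks: checking that \eqref{UpsEqn} remains valid on $\{\widetilde D u = 0\}$, and establishing the $L^q$ bound \eqref{UpsBd}. Both will follow from a single coercivity estimate, which is really the heart of the proof.

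The key observation is that, for real-valued $u$, one reads off from Lemma \ref{decompLem} that the real and imaginary parts of $\widetilde D u$ are exactly the two components of the vector $(A+I)\gr u$. In particular,
\[
|\widetilde D u|^2 = |(A+I)\gr u|^2 \quad \text{a.e.}
\]
Since we are in the reduced setting where $A$ is symmetric, satisfies \eqref{ellip}, and has $\det A = 1$, the matrix $A+I$ is symmetric and positive definite with smallest eigenvalue bounded below by a constant $c$ depending only on $\la$ and $\La$. Hence $|\widetilde D u| \ge c|\gr u|$ almost everywhere.

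Once this coercivity inequality is in hand, everything falls out. On the exceptional set $\{\widetilde D u = 0\}$ the inequality forces $\gr u = 0$, so $W \cdot \gr u$ also vanishes there and the piecewise definition of $\widetilde\Upsilon$ is consistent with \eqref{UpsEqn}. On the complement, Cauchy--Schwarz combined with the coercivity estimate gives the pointwise bound $|\widetilde\Upsilon| \le c^{-1}|W|$. Taking $L^q$ norms and invoking the bound on $W$ obtained from \eqref{WW} together with the hypothesis $\norm{W}_{L^{q}\pr{B_d}} \le K$ then yields \eqref{UpsBd}. I do not anticipate a serious obstacle here; the only minor point requiring care is the measurability of $\widetilde\Upsilon$, which is immediate since $\{\widetilde D u \ne 0\}$ is measurable and $\widetilde\Upsilon$ is a quotient of measurable functions on that set.
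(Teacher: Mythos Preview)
Your proof is correct and takes a genuinely different route from the paper's. The paper constructs an auxiliary complex function $\Upsilon = e + if$ with $e,f$ determined by solving the $2\times 2$ linear system $(A+I)\binom{e}{f} = \binom{W_1}{W_2}$, so that the real part of $\Upsilon\,\widetilde D u$ equals $W\cdot\gr u$; it then sets $\widetilde\Upsilon = \tfrac12\bigl(\Upsilon + \bar\Upsilon\,\overline{\widetilde D u}/\widetilde D u\bigr)$ on $\{\widetilde D u\ne 0\}$. The bound \eqref{UpsBd} then comes from the pointwise estimate $|\widetilde\Upsilon|\le|\Upsilon|\le C|W|$, which follows from inverting $A+I$.

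Your approach short-circuits all of this by defining $\widetilde\Upsilon$ directly as the quotient and extracting the pointwise bound $|\widetilde\Upsilon|\le c^{-1}|W|$ from the coercivity $|\widetilde D u|\ge c|\gr u|$. This is more elementary and avoids the algebra of solving for $e,f$; indeed the paper itself later uses the equivalence $|\widetilde D u|\sim|\gr u|$ without comment. The only thing the paper's construction buys is an explicit $\Upsilon$ independent of $u$, but that plays no role in the sequel. One minor imprecision: the imaginary part of $\widetilde D u$ is the \emph{negative} of the second component of $(A+I)\gr u$, not the component itself, but this has no effect on the modulus identity $|\widetilde D u|=|(A+I)\gr u|$ you need.
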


\begin{proof}
Set $\Upsilon = e + i f$, where $e, f$ are real-valued functions to be determined.
Then
\begin{align*}
\Upsilon \widetilde D u 
&= \pr{e + i f} \set{\brac{1 + a_{11} - i a_{12}}\del_x u + \brac{a_{12} - i \pr{1 + a_{22}}}\del_y u} \\
&= \brac{e \pr{1 + a_{11}} + f a_{12}}\del_x u + \brac{e a_{12} + f \pr{1 + a_{22}}} \del_y u \\
&+ i \set{\brac{f\pr{1 + a_{11}} - e a_{12}} \del_x u + \brac{f a_{12} - e\pr{1 + a_{22}}} \del_y u}
\end{align*}
so that
\begin{align*}
\frac{1}{2}\brac{\Upsilon \widetilde D u + \overline{\Upsilon  \widetilde D u} }
&= \brac{e \pr{1 + a_{11}} + f a_{12}}\del_x u + \brac{e a_{12} + f \pr{1 + a_{22}}} \del_y u.
\end{align*}
If we define 
$$\widetilde \Upsilon = \left\{\begin{array}{ll} 
\frac{1}{2}\brac{\Upsilon + \bar {\Upsilon}\frac{ \overline{ \widetilde D u}}{\widetilde D u}} & \text{ whenever }  \widetilde D u \ne 0 \\ 
0 & \text{ otherwise } \end{array}\right.,$$
then \eqref{UpsEqn} will be satisfied if we choose $e$, $f$ so that
\begin{align*}
e \pr{1 + a_{11}} + f a_{12} &= W_1 \\
e a_{12} + f \pr{1 + a_{22}} &= W_2.
\end{align*}
Solving this system, we see that
\begin{align*}
\brac{\begin{array}{l} e \\ f \end{array}} 
&= \frac{1}{\det\pr{A+I}} \brac{\begin{array}{ll} 1 + a_{22} & - a_{12} \\ - a_{12} & 1 + a_{11}  \end{array}}  \brac{\begin{array}{l} W_1\\ W_2  \end{array}} 
=  \frac{1}{\det\pr{A+I}} \brac{\begin{array}{l} \pr{1 + a_{22}} W_1 - a_{12} W_2 \\
- a_{12}W_1 + \pr{1 + a_{11}} W_2   \end{array}}.
\end{align*}
The bounds on $A$ and $W$ imply that \eqref{UpsBd} holds and the proof is complete.
\end{proof}

An application of the previous two lemmas shows that
\begin{equation}
D \widetilde D u = \pr{ \widetilde \Upsilon - \Ga} \widetilde D u.
\label{diffEq2}
\end{equation}
Now we apply the technique from the proof of Theorem \ref{OofV} to the equation above, where $\widetilde D u$ now plays the role of $w$.
The similarity principle given in Lemma \ref{simPrinc} implies that any solution to \eqref{diffEq2} in $B_d$ takes the form
$$\widetilde D u = f\pr{z} g\pr{z},$$
with
$$D_{\widetilde D u} f = 0 \;\; \text{ in } \; B_d,$$
and 
$$g\pr{z} = \exp\pr{T \om\pr{z}},$$
where 
$\om \in L^t$ for some $t \in \brac{2, q}$ with 
$$\norm{\om}_{L^t\pr{B_d}} \le C\brac{\|\widetilde \Upsilon\|_{L^q\pr{B_d}} +  \norm{\Ga}_{L^{\iny}\pr{B_d}}}.$$
Now we have to consider the cases of $q > 2$ and $q = 2$ separately.

\subsection{The case $q > 2$}
Assuming that $q > 2$, Corollary \ref{simCor} implies that for a.e. $z \in B_{\rho\pr{7/5}}$,
\begin{align*}
\exp\brac{-C \pr{ \|\widetilde \Upsilon\|_{L^q\pr{B_{\rho\pr{7/5}}}} +  \norm{\Ga}_{L^{\iny}\pr{B_{\rho\pr{7/5}}}}}} 
&\le \abs{g\pr{z}} \le \exp\brac{C \pr{ \|\widetilde \Upsilon\|_{L^q\pr{B_{\rho\pr{7/5}}}} +  \norm{\Ga}_{L^{\iny}\pr{B_{\rho\pr{7/5}}}}}}.
\end{align*}
That is,
\begin{align}
\exp\pr{-C K } \le & \abs{g\pr{z}} \le \exp\pr{C K} \;\; \text{ in } B_{\rho\pr{7/5}}.
\label{gBnd2}
\end{align}
Proceeding as in the proof of Theorem \ref{OofV}, we see that
\begin{align}
\| \widetilde D u\|_{L^\iny\pr{Q_{s_1}}} \le \exp\pr{C K} \pr{r^{-1}\|\widetilde D u\|_{L^2\pr{B_{r/2}}}}^\te \| \widetilde D u\|_{L^2\pr{B_{r_3}}}^{1 - \te}.
\label{3balls22}
\end{align}
Since $|\widetilde D u| \sim \abs{\gr u}$, then an application of Lemma \ref{Cacc+} shows that
\begin{align*}
\|\widetilde D u\|_{L^2\pr{B_{r/2}}} &\le \pr{1 + r^{2 - \frac 4 q} K^2 } \norm{u}_{L^\iny\pr{B_{r}}} \\
\|\widetilde D u\|_{L^2\pr{B_{r_3}}} &\le \pr{1 + K^2 } \norm{u}_{L^\iny\pr{B_{d}}} \le  e^{C K}.
\end{align*}
where we have applied \eqref{localBd} in the second inequality.
Upon setting $s_1 = 6/5$ in \eqref{3balls22} and using the bounds established above, we have
\begin{align}
\norm{\gr u}_{L^\iny\pr{Q_{6/5}}} 
&\le \exp\pr{C K} \pr{ r^{-1}\norm{ u}_{L^\iny\pr{B_{r}}}}^\te .
\label{3bgrLHS}
\end{align}
To complete the proof, we need to bound the left-hand side from below using the assumption that $\norm{u}_{L^\iny\pr{B_b}} \ge 1$.
We repeat the argument from \cite{KSW15} here.
Since $B_b \su Q_{1}$, this assumption implies that there exists $z_0 \in Q_1$ such that $\abs{u\pr{z_0}} \ge 1$. 
Without loss of generality, we assume that $u\pr{z_0} \ge 1$.
Since $u$ is real-valued, then for any $a > 0$, we have that either $u\pr{z} \ge a$ for all $z \in Q_{6/5}$, or there exists $z_1 \in Q_{6/5}$ such that $u\pr{z_1} < a$.
If the second case holds, then we see that $u\pr{z_1} \le a$, while $u\pr{z_0} \ge 1$.
If we set $a = \exp\pr{- K}$ then it follows that
\begin{align*}
C \norm{\gr u}_{L^\iny\pr{Q_{6/5}}} 
\ge \abs{u\pr{z_0} - u\pr{z_1}} 
\ge 1 - \exp\pr{- K} 
\ge \frac{1}{2}.
\end{align*}
Combining this bound with \eqref{3bgrLHS} and rearranging leads to the proof of the theorem.
If we are in the former case, then $u\pr{z} \ge a$ for all $z \in Q_{6/5}$ and the conclusion of the theorem is obviously satisfied.
The proof of the Theorem \ref{OofV2}(a) is now complete.

\subsection{The case of $q = 2$}
When $q = 2$, $\om \in L^2\pr{B_d}$ with $\norm{\om}_{L^2\pr{B_d}} \le C K$.
As
\begin{equation*}
h\pr{z} :=T \om (z)=\frac{1}{\pi}\int_{B_d}\frac{\om(\xi)}{\xi-z}d\xi,
\end{equation*}
we have that
$$\norm{h}_{W^{1,2}\pr{B_d}}=\norm{h}_{L^2\pr{B_d}}+ \norm{\gr h}_{L^2\pr{B_d}}\le CK.$$
Since we do not have $T \om \in L^\iny$ for $\om \in L^2$, we again use the result from \cite{KW15} described in Lemma \ref{lemma0701}.
Following the arguments in the proof of Theorem \ref{OofV}, we see that \eqref{threeBall} holds with $f = \widetilde Du \, \exp\pr{-h}$.
That is,
\begin{align*}
\norm{\widetilde Du \, \exp\pr{-h}}_{L^\iny\pr{Q_{s_1}}} 
&\le C \pr{r^{-2} \norm{\widetilde Du \, \exp\pr{-h}}_{L^1\pr{B_{r/2}}}}^\te 
\pr{\norm{\widetilde Du \, \exp\pr{-h}}_{L^1\pr{B_{r_3}}}}^{1 - \te},
\end{align*}
where $r/4 = \rho\pr{s/4}$, $r_2 = \rho\pr{s_2}$ and $r_3 \in \pr{r_2, \rho\pr{7/5}}$ is such that $r_3-r_2 \sim 1$ and $\rho\pr{7/5} - r_3 \sim 1$. 
Then an application of Lemmas \ref{Cacc+} and \ref{lemma0701} shows that
\begin{align*}
r^{-2} \norm{\widetilde Du \, \exp\pr{-h}}_{L^1\pr{B_{r/2}}}
&\le r^{-2} \int_{B_{r/2}} \abs{\widetilde Du} \exp\pr{\abs{h}} \\
&\le C r^{-1} \pr{\int_{B_{r/2}} \abs{\gr u}^2}^{1/2} \pr{\fint_{B_{r/2}}\exp\pr{2\abs{h}}}^{1/2} \\
&\le C r^{-1} r^{-CK}\exp(CK+CK^2) \pr{1 + K^2} \norm{u}_{L^\iny\pr{B_r}} \\
&\le r^{-CK} \exp\pr{C K^2}\norm{u}_{L^\iny\pr{B_r}}
\end{align*}
and
\begin{align*}
\norm{\widetilde Du \, \exp\pr{-h}}_{L^1\pr{B_{r_3}}}
&\le \exp\pr{C K^2}\norm{u}_{L^\iny\pr{B_{\rho\pr{7/5}}}}
\le M \exp\pr{C K^2},
\end{align*}
where we have used the upper bound on $u$.
Similarly, we see that with $s_1=6/5$ and
\begin{equation}
\tilde b = \si\pr{6/5},
\label{btsigma}
\end{equation}
we have
\begin{align*}
1 \le \norm{\gr u}_{L^2\pr{B_{\tilde b}}} 
&\le \norm{\gr u}_{L^2\pr{Q_{s_1}}} 
\le C \norm{\widetilde Du}_{L^2\pr{Q_{s_1}}} 
= C \norm{\widetilde Du \, \exp\pr{-h}}_{L^\iny\pr{Q_{s_1}}} \norm{\exp\pr{\abs{h}}}_{L^2\pr{Q_{s_1}}} \\
&\le \exp(CK^2) \norm{\widetilde Du \, \exp\pr{-h}}_{L^\iny\pr{Q_{s_1}}}.
\end{align*}
Combining the inequalities above and simplifying shows that
\begin{align*}
\norm{u}_{L^\iny\pr{B_r}}
&\ge r^{CK}\exp\pr{- \frac{C K^2 + \log M}{\te}}
\ge r^{C\pr{\log M + K^2}}.
\end{align*}

\section{The proof of Theorem \ref{OofV3}}
\label{S8}

\subsection{The case of $A= I$, $q = \iny$}

We first consider case (a) of Theorem \ref{OofV3}.
Let $u$ be a solution to \eqref{epde} with $A = I$.
The first step is to prove that there exists a positive solution to the equation
\begin{equation}
-\LP \phi + \pr{W_1 + W_2} \cdot \gr \phi + \pr{V - W_1 \cdot W_2} \phi = 0
\label{adjPDE}
\end{equation}
in $B_{9/5}$.
Let $\eta$ be some constant to be determined and set 
$$\phi_1\pr{x,y} = \exp\pr{\eta x}.$$ 
Then by the bounds on $W_1$, $W_2$ and $V$, we see that
\begin{align*}
-\LP \phi_1 + \pr{W_1 + W_2} \cdot \gr \phi_1 +  \pr{V - W_1 \cdot W_2}  \phi_1
&\le\pr{ -\eta^2 + 2K \eta + 2 K^2} \phi_1 .
\end{align*}
If $\eta = 3 K$, then $\phi_1$ is a subsolution.  
Now define $\phi_2 = \exp\pr{6 K}$ so that $\phi_2 \ge \phi_1$ on $B_{9/5}$.
Since $V - W_1 \cdot W_2 \ge 0$, then $-\LP \phi_2 + \pr{W_1 + W_2} \cdot \gr \phi_2 +  \pr{V - W_1 \cdot W_2}  \phi_2 \ge 0$, so $\phi_2$ is a supersolution.  
It follows that there exists a positive solution $\phi$ to \eqref{adjPDE} such that
\begin{equation}
\exp\pr{-C_1 K} \le \phi\pr{z} \le \exp\pr{C_1K} \;\;\;\ \text{ for all } z \in B_{9/5}.
\label{phiBd1}
\end{equation}
Define $\disp v = \frac{u}{\phi}$ and observe that
\begin{align*}
\di \pr{\gr v + W_1 v} 
&= \di \brac{\frac{\gr u + W_1 u}{\phi} - \frac{u \gr \phi}{\phi^2}} \\
&=  \frac{\di \pr{\gr u + W_1 u}}{\phi} - \frac{u \LP \phi}{\phi^2} - \frac{ u W_1 \cdot \gr \phi }{\phi^2} - 2\frac{\gr u \cdot \gr \phi}{\phi^2}  + 2 \frac{u \abs{\gr \phi}^2}{\phi^3} \\
&=  \pr{W_2 - 2 \frac{\gr \phi}{\phi}} \cdot \gr v 
+ \pr{W_1 \cdot W_2 - 2 W_1 \cdot \frac{ \gr \phi }{\phi} } v.
\end{align*}
Therefore,
\begin{align*}
\di \pr{\gr v + W_1 v} + W_1 \cdot \gr v + \abs{W_1}^2 v 
&=  \pr{W_1 + W_2 - 2 \frac{\gr \phi}{\phi}} \cdot \gr v 
+ \pr{\abs{W_1}^2 + W_1 \cdot W_2- 2W_1 \cdot \frac{ \gr \phi }{\phi} } v \\
&= W_3 \cdot \pr{\gr v + W_1 v},
\end{align*}
where $W_3 = W_1 + W_2 - 2 \frac{\gr \phi}{\phi}$.
Define the operators
\begin{align*}
\bar D &= \del_x + i \del_y + W_{11} + i W_{12} \\
D &= \del_x - i \del_y + W_{11} - i W_{12},
\end{align*}
where $W_1 = \pr{W_{11}, W_{12}}$.
Then
\begin{align*}
Dv &= \del_x v - i \del_y v + W_{11} v - i W_{12} v \\
\bar D D v 
&= \pr{\del_x + i \del_y + W_{11} + i W_{12} } \pr{\del_x v - i \del_y v + W_{11} v - i W_{12} v} \\
&= \del_{xx} v - i \del_{xy} v + \del_x\pr{W_{11} v} - i \del_x\pr{W_{12} v} 
+i \del_{yx} v + \del_{yy} v + i\del_y\pr{ W_{11} v} + \del_y\pr{ W_{12} v} \\
&+ W_{11} \del_x v - i W_{11} \del_y v + \pr{W_{11}}^2 v - i W_{11} W_{12} v 
+ i W_{12}\del_x v + W_{12} \del_y v +i  W_{11}W_{12} v + \pr{W_{12}}^2 v \\
&= \di\pr{\gr v + W_1 v} + W_1 \cdot \gr v + \abs{W_1}^2 v 
+ i\pr{\del_y W_{11} - \del_x W_{12}} v.
\end{align*}
Since we have assumed that the scalar curl of $W_1$ vanishes weakly, then 
$$\bar D D v = \di\pr{\gr v + W_1 v} + W_1 \cdot \gr v + \abs{W_1}^2 v .$$
If we let $Z = \frac 1 2 \brac{W_{31} + i W_{32} + \pr{W_{31} - i W_{32}}\frac{\overline{Dv}}{Dv}}$ then
$$Z Dv = W_3 \cdot \pr{\gr v + W_1 v}.$$
Therefore, with $w = Dv$, we have $\bar D w = Z w$, or $\bar \del w + \al w = 0$ where $\al = W_{11} + i W_{12} - Z$, so that
$$\bar \del \pr{e^{T\pr{\al}}w} = 0.$$
As shown in Lemma 3.1 of \cite{KSW15}, for example, $\norm{\gr \log \phi}_{L^\iny\pr{B_{7/5}}} \le C K$ so that  $\norm{\al}_{L^\iny\pr{B_{7/5}}} \le C K$ and $\norm{T\pr{\al}}_{L^\iny\pr{B_{7/5}}} \le C K$.
Now we apply the Hadamard three-circle theorem in combination with an interior estimate to $e^{T\pr{\al}}w$ to conclude that
\begin{align*}
\norm{e^{T\pr{\al}}w}_{L^\iny\pr{B_{6/5}}} 
&\le C \norm{e^{T\pr{\al}}w}_{L^2\pr{B_{7/5}}}^{1 - \te} \pr{ r^{-1}\norm{e^{T\pr{\al}}w}_{L^2\pr{B_{r/2}}}}^\te ,
\end{align*}
where $\te = \log\pr{\frac 7 6}/{\log\pr{\frac{14}{5r}}}$.
Rearranging and using the bound on $T\pr{\al}$, we see that
\begin{align*}
\norm{w}_{L^\iny\pr{B_{6/5}}} 
\le \exp\pr{C K} \norm{w}_{L^2\pr{B_{7/5}}}^{1 - \te} \pr{r^{-1}\norm{w}_{L^2\pr{B_{r/2}}}}^\te .
\end{align*}
Since $\abs{w} = \abs{Dv} \le \abs{\gr v} + K \abs{v}$, the Caccioppoli estimate given in Lemma \ref{Cacc+} and the bound in \eqref{localBd} imply that
\begin{align*}
\norm{w}_{L^2\pr{B_{r/2}}} &\le \exp\pr{C K} \norm{u}_{L^\iny\pr{B_{r}}} \\
\norm{w}_{L^2\pr{B_{7/5}}} &\le \exp\pr{C K}. 
\end{align*}
Therefore,
\begin{align}
\norm{w}_{L^\iny\pr{B_{6/5}}} 
\le \exp\pr{C K} \pr{r^{-1}\norm{u}_{L^\iny\pr{B_{r}}}}^\te .
\label{wUpper}
\end{align}
Now we need to use \eqref{localNorm} to bound the left-hand side.
Recall that $w = \pr{\del_x v + W_{11} v} - i \pr{\del_y v + W_{12} v}$.
Since $W_1$ is assumed to be weakly curl-free, the function $\Phi$ given by
\begin{align*}
\Phi\pr{x,y} &= \int_0^x W_{11}\pr{t, y} dt + \int_0^y W_{12}\pr{0, s} ds
\end{align*}
satisfies $\gr \Phi = W_1$.
Moreover, for any $\Om \su B_{9/5}$, $\norm{\Phi}_{L^\iny\pr{\Om}} \le C \norm{W_1}_{L^\iny\pr{B_{9/5}}} \le C K$.
Using this representation of $W_1$, we see that
\begin{align*}
w 
&= \pr{\del_x v + W_{11} v} - i \pr{\del_y v + W_{12} v} 
= \pr{\del_x v + \del_x \Phi v} - i \pr{\del_y v + \del_y \Phi v} \\
&= e^{-\Phi} \brac{\del_x\pr{e^\Phi v} - i \del_y\pr{e^\Phi v} }.
\end{align*}
Since $v$ and $\Phi$ are real-valued, $\abs{w} \sim e^{-\Phi} \abs{\gr\pr{e^\Phi v}}$ so that $\norm{w}_{L^\iny\pr{B_{6/5}}} \ge e^{- CK} \norm{\gr\pr{e^\Phi v}}_{L^\iny\pr{B_{6/5}}}$.
To bound $\norm{\gr\pr{e^\Phi v}}_{L^\iny\pr{B_{6/5}}}$ from below, we again repeat the argument from \cite{KSW15}.
The lower bound on $u$ given in \eqref{localNorm} implies that there exists $z_0 \in B_1$ such that $\abs{u\pr{z_0}} \ge 1$. 
Without loss of generality, we assume that $u\pr{z_0} \ge 1$.
Since $u$ is real-valued, then for any $a > 0$, we have that either $u\pr{z} \ge a$ for all $z \in B_{6/5}$, or there exists $z_1 \in B_{6/5}$ such that $u\pr{z_1} < a$.
If the second case holds, then we see that $u\pr{z_1} \le a$ so that by \eqref{phiBd1} and the bound on $\Phi$, $e^\Phi v\pr{z_1} \le a \exp\pr{CK}$, while $u\pr{z_0} \ge 1$ so that $e^\Phi v\pr{z_0} \ge \exp\pr{-CK}$.
If we set $a = \frac 1 2\exp\pr{- 2CK}$, then
\begin{align*}
C \norm{\gr (e^\Phi v)}_{L^\iny\pr{B_{6/5}}} 
&\ge \abs{e^\Phi v\pr{z_0} - e^\Phi v\pr{z_1}} 
\ge \exp\pr{-CK} - \frac 1 2\exp\pr{- 2CK}\exp\pr{CK} \\
&\ge \frac 1 2\exp\pr{- CK}.
\end{align*}
It follows that $\norm{w}_{L^\iny\pr{B_{6/5}}} \ge e^{- CK}$.
Substituting this bound into \eqref{wUpper} and rearranging leads to the proof of the theorem.
If we are in the former case, then $u\pr{z} \ge a$ for all $z \in Q_{6/5}$ and the conclusion of the theorem is obviously satisfied.

\subsection{The case of $W_2, V \equiv 0$}
\label{curlCor}

If $W_2, V \equiv 0$, then equation \eqref{epde} reduces to
$$-\di\pr{A \gr u + W_1 u} = 0.$$
Define $\tilde u\pr{x, y} = u\pr{y, -x}$, $\widetilde W_1\pr{x, y} = W_1\pr{y, -x}$ and $\tilde A\pr{x,y} = \brac{\begin{array}{ll} a_{22}\pr{y, -x} & -a_{12}\pr{y, -x} \\ - a_{12}\pr{y, -x} & a_{11}\pr{y, -x} \end{array}}$.
Since \eqref{ellip} implies that $\xi^T A\pr{x, y} \xi \ge \la \abs{\xi}^2$ for every $\pr{x,y} \in B_d$ and $\xi \in \R^2$, then upon replacing $\xi = \pr{\xi_1, \xi_2}^T$ with $\pr{\xi_2, -\xi_1}^T$, it is clear that $\tilde A$ is also uniformly elliptic.
The other conditions on $A$ are clearly inherited for $\tilde A$.
Since $\di\pr{\tilde A \gr \tilde u} = \di\pr{A \gr u}\pr{y, -x}$ and, by the curl-free condition, $\widetilde W_1 \cdot \gr \tilde u = \di\pr{W_1 u}\pr{y,-x}$ then the PDE further reduces to
$$-\di\pr{\tilde A \gr \tilde u} - \widetilde W_1 \cdot \gr \tilde u = 0.$$
An application of Theorem \ref{OofV2} leads to the proof of Theorem \ref{OofV3} for case (b).

\section{Unique continuation at infinity estimates}
\label{S9}

We follow the scaling approach of Bourgain and Kenig from \cite{BK05} to show how Theorems \ref{LandisThm}, \ref{LandisThm2} and \ref{LandisThm3} follow from Theorems \ref{OofV1}, \ref{OofV2}, and \ref{OofV3}, respectively.

\subsection{The proof of Theorem \ref{LandisThm}}
Let $u$ be a solution to \eqref{epde2} in $\R^2$.
Choose $z_0 \in \R^2$ so that $\abs{z_0} = b R$.
Define $u_R(z) = u(z_0 + Rz)$, $A_R\pr{z} = A\pr{z_0 + R z}$, $W_{R}\pr{z} = R \, W\pr{z_0 + R z}$.
Notice that for any $r > 0$,
\begin{align*}
\norm{W_{R}}_{L^{q}\pr{B_r\pr{0}}}
&= \pr{\int_{B_r\pr{0}} \abs{W_{R}\pr{z}}^{q} dz}^{\frac 1 {q}}
= \pr{\int_{B_r\pr{0}} \abs{R \, W\pr{z_0 + R z}}^{q} dz}^{\frac 1 {q}} \\
&= R^{1 - \frac 2 {q} } \pr{  \int_{B_r\pr{0}} \abs{W\pr{z_0 + R z}}^{q} d\pr{Rz} }^{\frac 1 {q_1}}
= R^{1 - \frac 2 {q} } \norm{W}_{L^{q}\pr{B_{r R}\pr{z_0}}}.
\end{align*}
Therefore, $\disp \norm{W_{R}}_{L^{q}\pr{B_d\pr{0}}}  \le \al R^{1 - \frac 2 {q}}$.
As
\begin{align*}
& - \di\brac{ A_R\pr{z} \gr u_R\pr{z} + W_{R}\pr{z} u_R\pr{z}}  \\
&= R^2 \set{- \di\brac{ A\pr{z_0 + Rz} \gr u\pr{z_0 + Rz} + W\pr{z_0 + Rz} u\pr{z_0 + Rz}}} 
= 0,
\end{align*}
then $u_R$ satisfies a scaled version of \eqref{epde2} in $B_{d}$.
By assumption \eqref{uBd},
\begin{align*}
\norm{u_R}_{L^\iny\pr{B_{d}}}
&= \norm{u}_{L^\iny\pr{B_{dR}\pr{z_0}}} 
\le \exp\brac{C_1 \pr{b + d}^{1 - \frac 2 {q}} R^{1 - \frac 2 {q}}}.
\end{align*}
Thus, if we choose $\hat C \ge C_1 \pr{b + d}^{1 - \frac 2 {q}}/\al$, then
\begin{align}
\norm{u_R}_{L^\iny\pr{B_{d}}}
\le \exp\pr{\hat C \al R^{1 - \frac{2}{q}} }.
\label{uRUpperBd}
\end{align}
Note that for $\disp\widetilde{z_0} := - \frac {z_0}{R} $, we have $\disp\abs{\widetilde{z_0}} = b$ and then \eqref{normed} implies that $\abs{u_R(\widetilde{z_0})} = \abs{u(0)} \ge 1$, so that 
\begin{align}
\norm{u_R}_{L^\iny(B_b)} \ge 1.
\label{uRLowerBd}
\end{align}

Assuming that $R$ is sufficiently large so that $1/R$ is sufficiently small, we apply Theorem \ref{OofV1}(a) to $u_R$ with $K = \al R^{1 - \frac {2}{q}}$ and $C_0 = \hat C$ to get
\begin{align*}
\norm{u}_{L^\iny\pr{{B_{1}(z_0)}}} 
&= \norm{u_R}_{L^\iny\pr{B_{1/R}(0)}}  
\ge (1/R)^{ C {\al R^{1 - \frac {2}{q}}}}
=  \exp\pr{- C {\al R^{1 - \frac {2}{q}}} \log R },
\end{align*}
where $C$ depends on $\la$, $\La$, $q$, and $\hat C$.

\subsection{The proof of Theorem \ref{LandisThm2}}

Let $u$ be a solution to \eqref{epde3} in $\R^2$.
Choose $z_0 \in \R^2$ so that $\abs{z_0} = b R$ and define $u_R$, $A_R$ and $W_R$ as above.
As before, $\disp \norm{W_{R}}_{L^{q}\pr{B_d\pr{0}}}  \le \al R^{1 - \frac 2 {q}}$.
Since
\begin{align*}
& - \di\brac{ A_R\pr{z} \gr u_R\pr{z}} + W_{R}\pr{z} \cdot \gr u_R\pr{z}   \\
&= R^2 \set{- \di\brac{ A\pr{z_0 + Rz} \gr u\pr{z_0 + Rz}} + W\pr{z_0 + Rz} \cdot \gr u\pr{z_0 + Rz}} 
= 0,
\end{align*}
then $u_R$ satisfies a scaled version of \eqref{epde3} in $B_{d}$.

If $q > 2$, then as above, \eqref{uBd} implies that \eqref{uRUpperBd} holds with $\hat C \ge C_1 \pr{b + d}^{1 - \frac 2 {q}}/\al$ and \eqref{normed} implies that \eqref{uRLowerBd} also holds.
Assuming that $R$ is sufficiently large, Theorem \ref{OofV2}(a) applied to $u_R$ with $K = \al R^{1 - \frac {2}{q}}$ and $C_0 = \hat C$ shows that
\begin{align*}
\norm{u}_{L^\iny\pr{{B_{1}(z_0)}}} 
&\ge \exp\pr{- C {\al R^{1 - \frac {2}{q}}} \log R },
\end{align*}
where $C$ depends on $\la$, $\La$, $\mu$, $q$, and $\hat C$.

If $q = 2$, then $\disp \norm{u_R}_{L^\iny\pr{B_{d}}} = \norm{u}_{L^\iny\pr{B_{dR}\pr{z_0}}} \le \brac{\pr{b + d}R}^m$ so we take $M = \brac{\pr{b + d}R}^m$.
With $\disp\widetilde{z_0} := - \frac {z_0}{R} $, $\disp\abs{\widetilde{z_0}} = b$ and then for $R$ sufficiently large so that $\pr{\tilde b -b}R \ge b$, we see that
$$\norm{\gr u_R}_{L^2\pr{B_{\tilde b}}} = \norm{\gr u}_{L^2\pr{B_{\tilde bR}\pr{z_0}}} \ge \norm{\gr u}_{L^2\pr{B_{b}}}  \ge 1.$$
Now we apply Theorem \ref{OofV2}(b) with $K = \al$ and $M = \brac{\pr{b + d}R}^m$ to get
\begin{align*}
\norm{u}_{L^\iny\pr{B_1\pr{z_0}}}
&= \norm{u_R}_{L^\iny\pr{B_{1/R}\pr{0}}}
\ge \pr{1/R}^{C\pr{m\log \brac{\pr{b + d}R} + \al^2}}
\ge \exp\brac{- C \pr{\log R}^2},
\end{align*}
where $C$ depends on $\la$, $\La$, $\mu$, $\al$, and $m$.

\subsection{The proof of Theorem \ref{LandisThm3}}
We first consider the case where $A = I$ and $q = \iny$.
Let $u$ be a solution to $\disp -\di\pr{\gr u + W_1 u} + W_2 \cdot \gr u + V u = 0$ in $\R^2$.
Choose $z_0 \in \R^2$ so that $\abs{z_0} = R$, define $u_R$ as above, and set $W_{i, R}\pr{z} =R W_i\pr{z_0 + Rz}$ for $i = 1,2$ and $V_{R}\pr{z} =R^2 V\pr{z_0 + Rz}$.
We have $\disp \norm{W_{i, R}}_{L^{\iny}\pr{B_{9/5}\pr{0}}}  \le \al_i R$ for $i = 1,2$ and $\disp \norm{V_{R}}_{L^{\iny}\pr{B_{9/5}\pr{0}}}  \le \al_0 R^2$.
Since
\begin{align*}
 - \di\brac{ \gr u_R\pr{z} + W_{1,R}\pr{z} u_R\pr{z}} + W_{2,R}\pr{z} \cdot \gr u_R\pr{z} + V_R\pr{z} u_R\pr{z}
&= 0,
\end{align*}
then $u_R$ satisfies a scaled version of the equation in $B_{9/5}$.
Let $\al = \max\set{\al_1, \al_2, \sqrt{\al_0}}$.
As above, \eqref{uBd} implies that \eqref{uRUpperBd} holds with $\hat C \ge 3 C_1/\al$, $d = \frac 9 5$ and $q = \iny$, while \eqref{normed} implies that \eqref{uRLowerBd} also holds with $b = 1$.
Assuming that $R$ is sufficiently large, Theorem \ref{OofV3}(a) applied to $u_R$ with $K = \al R$ and $C_0 = \hat C$ shows that
\begin{align*}
\norm{u}_{L^\iny\pr{{B_{1}(z_0)}}} 
&\ge \exp\pr{- C \al R \log R },
\end{align*}
where $C$ depends on $\hat C$.

For the cases where $q < \iny$ and $W_2, V \equiv 0$, the reduction described in Section \ref{curlCor} allows us to reduce to the setting described by Theorem \ref{LandisThm2} and the result is immediate.

\begin{appendix}

\section{Maximum principle}
\label{AppA}
\numberwithin{equation}{section}
\setcounter{equation}{0}  

Here we prove the maximum principle that is used in the proof of Lemma \ref{phiLem}.
In particular, we generalize some of the standard theorems from Chapter 8 of \cite{GT01} to extend to elliptic operators with singular lower order terms.
To this end, we employ roughly the same techniques, but with applications of H\"older and Sobolev inequalities to accommodate for the unbounded potential functions.

We assume that $\Om \su \R^2$ is open, connected and bounded with a $C^1$ boundary.
Assume that $A$ satisfies \eqref{ellip} and \eqref{ABd}, while $W_1 \in L^{q_1}\pr{B_d}$, $W_2 \in L^{q_2}\pr{B_d}$, and $V \in L^{p}\pr{B_d}$ for some $q_1, q_2 \in (2, \iny]$, $p \in (1, \iny]$.
Define
$$\mathcal{L}^* := -\di\pr{A^T \gr + W_2} + W_1 \cdot \gr + V.$$

\begin{thm}[cf. Theorem 8.1 in \cite{GT01}]
Let $u \in W^{1,2}\pr{\Om}$ weakly satisfy $\mathcal{L}^* u \le 0$ in $\Om$.
Assume that \eqref{pos2} holds.
Then
$$\sup_{\Om} u \le \sup_{\del \Om} u^+.$$
\label{maxPrinc}
\end{thm}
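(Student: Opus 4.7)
The plan is to adapt the classical Stampacchia truncation argument for the weak maximum principle, as in Chapter~8 of \cite{GT01}, to the present setting of singular coefficients. The key ingredients are the planar Sobolev embedding $W^{1,2}_0\pr{\Om}\hookrightarrow L^r\pr{\Om}$ for every finite $r$, which substitutes here for the usual critical Sobolev inequality, and the positivity hypothesis \eqref{pos2}, which is precisely designed to eliminate the zero-order contribution that would otherwise obstruct the argument.

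Set $\ell:=\sup_{\del\Om}u^+\ge 0$ and consider the truncation $v:=\pr{u-\ell}^+\in W^{1,2}_0\pr{\Om}$, aiming to prove $v\equiv 0$. Since $q_2>2$ yields $q_2'<2$ and $p>1$ yields $p'<\iny$, the two-dimensional Sobolev embedding places $v\in W^{1,q_2'}\pr{\Om}\cap L^{p'}\pr{\Om}$, so $v$ is admissible as $\phi$ in \eqref{pos2}. First I would substitute $v$ into the weak form of $\Cal{L}^*u\le 0$,
\[
\int_\Om A^T\gr u\cdot\gr v+W_2\,u\cdot\gr v+\pr{W_1\cdot\gr u}v+Vuv\le 0,
\]
and use $u=v+\ell$, $\gr u=\gr v$ on $\supp v\su\set{u>\ell}$ to split each integrand into a piece quadratic in $v$ and a piece linear in $\ell$. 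Since $\ell\ge 0$, the positivity assumption \eqref{pos2} with $\phi=v$ discards the linear piece $\ell\int\pr{W_2\cdot\gr v+Vv}\ge 0$, leaving
\[
\int_\Om A^T\gr v\cdot\gr v+\int_\Om W_2\,v\cdot\gr v+\int_\Om\pr{W_1\cdot\gr v}v+\int_\Om Vv^2\le 0.
\]

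Next I would estimate the three cross-terms by H\"older with dual exponents $\al_i:=2q_i/\pr{q_i-2}<\iny$ for $i=1,2$ and $\be:=2p/\pr{p-1}<\iny$, and exploit the support identity $v=v\chi_{\Ga}$, where $\Ga:=\set{u>\ell}$, through the interpolation
\[
\norm{v}_{L^{\al_i}\pr{\Om}}\le\abs{\Ga}^{1/\al_i-1/s}\norm{v}_{L^s\pr{\Om}}\le C_s\abs{\Ga}^{1/\al_i-1/s}\norm{\gr v}_{L^2\pr{\Om}}
\]
for a sufficiently large finite $s$, with an analogous bound for $\norm{v}_{L^\be}$. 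Choosing $s$ so that $1/\al_i-1/s>0$ and $1/\be-1/s>0$, and combining with \eqref{ellip}, produces
\[
\la\norm{\gr v}_{L^2}^2\le C\pr{\norm{W_1}_{L^{q_1}}+\norm{W_2}_{L^{q_2}}+\norm{V}_{L^p}}\abs{\Ga}^\de\norm{\gr v}_{L^2}^2
\]
for some $\de>0$. This yields the dichotomy: either $\gr v\equiv 0$ (whence $v\equiv 0$ since $v\in W^{1,2}_0$, completing the proof), or $\abs{\Ga}\ge c_0>0$ for an explicit $c_0$ depending only on $\la$ and the Lebesgue norms of the data.

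To close the argument under the second branch of the dichotomy, I would run Stampacchia's iteration: applying the same construction to $v_k:=\pr{u-k}^+$ at levels $k>\ell$, the elementary pointwise bound $\pr{h-k}\chi_{\Ga_h}\le v_k$ together with $\norm{v_k}_{L^s}\le C_s\norm{\gr v_k}_{L^2}$ yields $\pr{h-k}^s\abs{\Ga_h}\le C_s\norm{\gr v_k}_{L^2}^s$, which, combined with the support-decay estimate above, produces a recursive inequality of the form $\abs{\Ga_h}\le C\pr{h-k}^{-\ga}\abs{\Ga_k}^{1+\eta}$ for $h>k\ge\ell$ with $\eta>0$. Stampacchia's lemma then forces $\abs{\Ga_{k_*}}=0$ at some finite $k_*$, and the dichotomy $\abs{\Ga_k}\ge c_0$ for $k<k_*$ closes down to $k_*=\ell$. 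The main obstacle is the bookkeeping required to choose the auxiliary exponent $s$ and to track $\de$ and $\ga$ so that the iteration condition $\eta>0$ is satisfied and the recursion terminates exactly at the maximum-principle level $k=\ell$; the role of \eqref{pos2} is precisely to cancel the otherwise obstructing linear-in-$\ell$ term in the second paragraph, without which the argument could not even be initialized at the desired level.
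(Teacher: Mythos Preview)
Your truncation and the application of \eqref{pos2} with $\phi=v$ are correct, but the closing step has a genuine gap. The inequality you derive has the form $\la\norm{\gr v_k}_{L^2}^2\le C\abs{\Ga_k}^\de\norm{\gr v_k}_{L^2}^2$, which is circular: it gives only the dichotomy ``$\gr v_k\equiv 0$ or $\abs{\Ga_k}\ge c_0$'' and no upper bound on $\norm{\gr v_k}_{L^2}$ in terms of $\abs{\Ga_k}$. Stampacchia's iteration, however, requires precisely such an upper bound in order to feed into the recursion $\abs{\Ga_h}\le C\pr{h-k}^{-\ga}\abs{\Ga_k}^{1+\eta}$; for a homogeneous inequality $\mathcal{L}^*u\le 0$ there is no source term to produce it, and your ``support-decay estimate'' runs in the wrong direction. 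Nor does the dichotomy alone close the argument with your choice $\Ga_k=\set{u>k}$: letting $k\uparrow\sup_\Om u$ yields only $\abs{\set{u=\sup_\Om u}}\ge c_0$, which is not a contradiction.

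The paper repairs both issues at once by applying \eqref{pos2} with the test function $\phi=uv$ rather than $\phi=v$. Since $k\ge\ell\ge 0$ forces $u>0$ on $\supp v$, one has $uv\ge 0$ and the regularity checks go through; this choice eliminates the zero-order term $\int Vuv$ together with part of the $W_2$ contribution, leaving only $\int A^T\gr u\cdot\gr v+\pr{W_1-W_2}\cdot\gr u\,v\le 0$. After substituting $v=\pr{u-k}^+$ the remaining integrals are supported on $\set{Dv\ne 0}$ rather than on $\set{v>0}$, so the same H\"older--Sobolev bound gives $\abs{\set{Dv_k\ne 0}}\ge c_0$ for every $k\in\brp{\ell,\sup_\Om u}$. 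Since $Du=0$ a.e.\ on the level set $\set{u=\sup_\Om u}$, the sets $\set{Dv_k\ne 0}=\set{u>k}\cap\set{Du\ne 0}$ have measure tending to zero as $k\uparrow\sup_\Om u$, and the contradiction is immediate---no iteration is needed. If you prefer to keep your splitting, a second application of \eqref{pos2} with $\phi=v^2$ would likewise remove the problematic $\int Vv^2$ term and allow you to localize to $\supp Dv$.
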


\begin{proof}
Let $v \in W^{1,2}_0\pr{\Om}$ be a non-negative function for which $u v \ge 0$ in $\Om$.
Since $\frac{2p}{p-1} > 2$, then the Sobolev inequality implies that $u, v \in L^{\frac{2p}{p-1}}\pr{\Om}$.
The H\"older inequality gives that $uv \in L^{p'}\pr{\Om}$. 
As $\frac{2q_i}{q_i-1} > 2$ for each $i$, the Sobolev inequality implies that $u, v \in L^{\frac{2q_i}{q_i-2}}\pr{\Om}$ and we may similarly conclude that $uv \in L^{q_i'}\pr{\Om}$. 
Note that $D\pr{u v} = D u \, v + u \, D v$.
Another application of the H\"older inequality in combination with the boundary information implies $uv \in W^{1, q_1'}_0\pr{\Om} \cap W^{1, q_2'}_0\pr{\Om} \cap L^{p'}\pr{\Om}$, so we may use it as a test function.
Since $\mathcal{L}^* u \le 0$, it follows from the definition that
\begin{align*}
\int_{\Om} A^T \gr u \cdot \gr v + W_2 \, u \cdot \gr v + W_1 \cdot \gr u \, v + V \, u \, v \le 0.
\end{align*}
Rearranging and using \eqref{pos2}, we see that
\begin{align*}
\int_{\Om} A^T \gr u \cdot \gr v + \pr{W_1 - W_2} \cdot \gr u \, v  \le - \int_{\Om} V \, u \, v + W_2 \cdot \gr\pr{u v} \le 0.
\end{align*}
Therefore,
\begin{align*}
\int_{\Om} A^T \gr u \cdot \gr v 
&\le \int_{\Om} \pr{W_2 - W_1} \cdot \gr u \, v
\le \sum_{i = 1}^2 \norm{W_i}_{L^{q_i}\pr{\Om}} \pr{\int_\Om \abs{\gr u \, v}^{q_i^\prime}}^{\frac 1 {q_i^\prime}}.
\end{align*}
In the case where $\norm{W_1}_{L^{q_1}\pr{\Om}} = 0$ and $\norm{W_2}_{L^{q_2}\pr{\Om}} = 0$, set $\disp l = \sup_{\del \Om} u^+$ and define $v = \max\set{u - l, 0}=(u-l)^+$.
The conclusion is then immediate.
Otherwise, choose $k$ so that $\disp l \le k \le \sup_{\Om} u$ and set $v = \pr{u - k}^+$.
(If no such $k$ exists, then we are finished.)
We have that $v \in W^{1,2}_0\pr{\Om}$ and 
$$D v = \left\{\begin{array}{ll} D u & u > k \\ 0 & u \le k.  \end{array}\right.$$
It follows from the last line of inequalities that
\begin{align*}
\int_{\Om} A^T \gr v \cdot \gr v 
\le \sum_{i = 1}^2 \norm{W_i}_{L^{q_i}\pr{\Om}} \pr{\int_\Ga \abs{D v \, v}^{q_i^\prime}}^{\frac 1 {q_i^\prime}}.
\end{align*}
where $\Ga = \supp Dv \su \supp v$.
The ellipticity condition in combination with a H\"older inequality then gives that
\begin{align*}
\norm{Dv}_{L^2\pr{\Om}}^2
\le \la^{-1} \norm{Dv}_{L^2\pr{\Om}} \sum_{i = 1}^2 \norm{W_i}_{L^{q_i}\pr{\Om}} \norm{v}_{L^{\frac{2q_i}{q_i-2}}\pr{\Ga}}
\end{align*}
or
\begin{align*}
\norm{Dv}_{L^2\pr{\Om}}
\le  \la^{-1} \sum_{i = 1}^2 \norm{W_i}_{L^{q_i}\pr{\Om}} \norm{v}_{L^{\frac{2q_i}{q_i-2}}\pr{\Ga}}.
\end{align*}
Now we apply the Sobolev inequality with some $2^* >\max\set{ \frac{2q_1}{q_1-2},  \frac{2q_2}{q_2-2}} > 2$ and the H\"older inequality to see that
\begin{align*}
C\norm{v}_{L^{2^*}\pr{\Om}} \le  \norm{Dv}_{L^2\pr{\Om}}
\le \la^{-1}  \norm{v}_{L^{2^*}\pr{\Om}} \sum_{i = 1}^2 \norm{W_i}_{L^{q_i}\pr{\Om}} \abs{\supp Dv}^{\frac{q_i-2}{2q_i} - \frac 1 {2^*}}.
\end{align*}
In particular, with $Q > 0$ chosen so that $\max\set{\abs{\supp Dv}^{\frac{q_i-2}{2q_i} - \frac 1 {2^*}}}_{i=1}^2 = \abs{\supp Dv}^Q$,
\begin{align*}
\abs{\supp Dv}
\ge \pr{ \frac{C\la}{ \norm{W_1}_{L^{q_1}\pr{\Om}} + \norm{W_2}_{L^{q_2}\pr{\Om}}} }^{\frac{1}{Q}}.
\end{align*}
Since this inequality is independent of $k$, it also holds as $k$ tends to $\disp \sup_{\Om} u$.
This means that the function $u$ must attain its supremum in $\Om$ on a set of positive measure, where at the same time $D u = 0$.
This contradiction implies that $\disp \sup_{\Om} u \le l$, as required.
\end{proof}

\section{Green's functions}
\label{AppB}
\numberwithin{equation}{section}

The purpose of this appendix is to establish a representation formula for solutions to non-homogeneous uniformly elliptic equations with vanishing Dirichlet boundary data.
To this end, we mimic the main technique presented in \cite{DHM16}, which is based on the ideas in \cite{HK07} and \cite{GW82}.
Since we only require such results for reasonably nice, bounded domains (balls), we assume throughout that $\Om \su \R^2$ is open, bounded, and connected.
Finally, we point out that the bounds given for the Green's functions are not sharp.
As is well known, pointwise logarithmic bounds for Green's functions in the plane are the best possible.
However, since we work using the methods of \cite{GW82}, \cite{HK07}, and \cite{DHM16}, and seek integrability properties for the Green's function instead of sharp pointwise bounds, our estimates will have power bounds.

We consider second-order, uniformly elliptic, bounded operators of divergence form with one first order term.
We use coercivity, the Caccioppoli inequality, and De Giorgi-Nash-Moser theory to establish existence, uniqueness, and a priori estimates for the Dirichlet Green's functions.

\subsection*{Notation and properties of solutions}

Let $\Om \su \R^2$ be open, bounded, and connected. 
In contrast to the main body of the article where we use the notation $z = \pr{x, y}$ to denote a point in $\R^2$, here we let $x, y$, etc. denote points in $\R^2$.

For any $x \in \Om$, $r > 0$, we define $\Om_r(x):=\Om \cap B_r(x)$ and $\Si_r\pr{x} := \del \Om \cap B_r\pr{x}$.
Let $C^\iny_c\pr{\Om}$ denote the set of all infinitely differentiable functions with compact support in $\Om$. 

For future reference, we mention that for $\Omega, U\subset \R^2$ open and connected, the assumption 
\begin{equation*}
u \in W^{1,2}(\Omega), \quad u={ 0} \mbox{ on } U\cap \partial\Om, 
\end{equation*}
is always meant in the weak sense of 
\begin{equation}
\label{eqB.1}
u \in W^{1,2}(\Omega) \mbox{ and } u \xi \in W^{1,2}_0(\Omega) \mbox{ for any } \xi \in C_c^\infty(U).
\end{equation}
This definition of (weakly) vanishing on the boundary is independent of the choice of $U$.  
Indeed, suppose $V$ is another open and connected subset of $\R^2$ such that $V \cap \partial \Om = U \cap \partial \Om$ and let $\xi \in C_c^\infty\pr{V}$.
Choose $\psi \in C_c^\infty\pr{U \cap V}$ such that $0\le \psi \le 1$ and $\psi \equiv 1$ on the support of $\xi$ in some neighborhood of the boundary.
Then $\xi \pr{1-\psi}|_{\Om}\in C_c^\infty\pr{\Om}$, so that $u \xi \pr{1-\psi} \in W^{1,2}_0\pr{\Om}$.
Additionally, $\xi \psi \in C_c^\infty\pr{U}$, so by \eqref{eqB.1}, $u \xi \psi \in W^{1,2}_0 \pr{\Om}$.
Therefore, $u \xi = u \xi \psi + u \xi \pr{1-\psi} \in W^{1,2}_0\pr{\Om}$, as desired.

Let $A = \pr{a_{ij}}_{i, j= 1}^{2}$ be bounded, measurable coefficients defined on $\Om$.
We assume that $A$ satisfies an ellipticity condition described by \eqref{ellip} and the boundedness assumption given in \eqref{ABd}.
Choose $W \in L^q\pr{\Om}$ for some $q \in (2, \iny]$ so that \eqref{pos2} holds with $q_i = q$.
The non-homogeneous second-order operator is
\begin{align}
\widetilde{L} 
&= -\di \pr{A \gr} + W \cdot \gr 
\label{eqB.2}
\end{align}
and the adjoint operator to $\widetilde{L}$ is given by
\begin{align}
\widetilde{L}^*
&= -\di \pr{A^T \gr + W} .
\label{eqB.3}
\end{align}
All operators are understood in the sense of distributions on $\Omega$. 
Specifically, for every $u \in W^{1,2}\pr{\Om}$ and $v \in C_c^\infty\pr{\Om}$, we use the naturally associated bilinear form and write the action of the functional ${\widetilde L}u$ on $v$ as
\begin{align}
({\widetilde L}u, v)={B}\brac{u, v} 
&= \int_\Om A \gr u \cdot \gr v + W \cdot \gr u \, v .
\label{eqB.4}
\end{align}
It is not hard to check that for such $u, v$ and for the coefficients as described above, the bilinear form above is well-defined and finite.  
Similarly, ${B}^*\brac{\cdot, \cdot}$ denotes the bilinear operator associated to $\widetilde{L}^*$, given by
\begin{align}
({\widetilde L}^*u, v)={B}^*\brac{u, v} 
&= \int  A^T \gr u \cdot \gr v + W \, u \cdot \gr v .
\label{eqB.5}
\end{align}
Clearly,
\begin{equation}
{B}\brac{v,u}=\, {B}^*\brac{u,v}.
\label{eqB.6}
\end{equation}
For any distribution ${\bf F} = \pr{f, G}$ on $\Omega$ and $u$ as above, we always understand $\widetilde{ L}u= \bf F $ on $\Omega$ in the weak sense, that is, as ${B}\brac{u,v}= {\bf F}(v)$ for all $v\in C_c^\infty\pr{\Om}$. 
Typically $f$ and $G$ will be elements of some $L^p(\Omega)$ spaces, so the action of $\bf F$ on $v$ is then simply $\disp \int f v + G \cdot Dv.$ 
The identity $\widetilde{ L}^*u= \bf F $ is interpreted similarly.

\begin{rem}
Assumption \eqref{ABd} and that $W \in L^q\pr{\Om}$, $q > 2$, in combination with H\"older and Sobolev inequalities imply that there exists $\La > 0$ so that for every $u, v \in W^{1,2}_0\pr{\Om}$,
\begin{align}
|{B}\brac{u, v}| &\le \La \norm{u}_{W^{1,2}\pr{\Om}} \norm{v}_{W^{1,2}\pr{\Om}}.
\label{eqB.7}
\end{align}
Since \eqref{pos2} holds, then \eqref{ellip} and the Poincar\'e inequality implies that there exists $\ga > 0$ so that for every $u \in W^{1,2}_0\pr{\Om}$
\begin{equation}
{B}\brac{u, u} \ge \ga \norm{u}_{W^{1,2}\pr{\Om}}^2.
\label{eqB.8}
\end{equation}
\end{rem}

Now we describe the important properties of solutions to either $\widetilde{L} u = 0$ or $\widetilde{L}u = \bf F$ that will be employed in the constructions below. We will use the following version of Moser (boundary) boundedness.

\begin{lem}{\rm\cite[Lemma~5.1]{DHM16}}
\label{l5.1}
Let $\Om \su \R^2$ be open and connected.
Let $u \in W^{1,2}\pr{\Om_{2R}}$ satisfy $u = 0$ along $\Si_{2R}$.
Let $f \in L^{\ell}\pr{\Om_{R}}$ for some $\ell \in \pb{ 1, \iny}$, $G \in L^{m}\pr{\Om_{R}}$ for some $m \in \pb{ 2, \iny}$ and assume that $\widetilde{L} u \le - \di G + f$ in $\Om_{R}$ weakly in the sense that for any $\vp \in W^{1,2}_{0}\pr{\Om_{R}}$ such that $\vp \ge 0$ in $\Om_{R}$, we have 
\begin{align*}
{B}\brac{u, \vp} \le \int G \cdot \gr \vp + f \vp .
\end{align*}
Then $u^+ \in L_{loc}^\iny\pr{\Om_R}$ and for any $r < R$, $s > 0$,
\begin{align}
\sup_{\Om_{r}} u^+ 
&\le \frac{C}{\pr{R- r}^{\frac 2 {s}}} \norm{u^+}_{L^{s}\pr{\Om_R}} 
+ c_{s} \brac{R^{2 - \frac 2 {\ell}}\norm{f}_{L^{\ell}\pr{\Om_R}} + R^{1 - \frac 2 {m}}\norm{G}_{L^{m}\pr{\Om_R}}},
\label{eqB.9}
\end{align}
where $C = C\pr{q, s, \ell, m, \ga,  \La, \norm{W}_{L^{q}\pr{\Om_R}}}$ and $c_{s}$ depends only on $s$.
Note that all of the constants are independent of $R$.
\end{lem}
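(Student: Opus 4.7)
The plan is to prove the lemma via a Moser iteration scheme adapted to the boundary setting and to the singular drift coefficient. To absorb the inhomogeneity, I would first set $\bar u := u^+ + k$ with $k := R^{2 - 2/\ell}\norm{f}_{L^\ell(\Om_R)} + R^{1 - 2/m}\norm{G}_{L^m(\Om_R)}$; this shift turns the $f$ and $G$ contributions into zeroth-order perturbations that will cost only a multiplicative factor at each step of the iteration. Since $u$ vanishes on $\Si_{2R}$ in the sense of \eqref{eqB.1}, its positive part $u^+$ extends by zero to an element of $W^{1,2}(B_R)$, so cutoff functions supported in a ball, rather than in $\Om \cap B_R$, may legitimately be used as test functions in the weak inequality $\widetilde{L} u \le -\di G + f$.

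For the Caccioppoli step, fix nested radii $r \le \rho < \rho' \le R$ and a smooth cutoff $\eta \in C^\iny_c(B_{\rho'})$ with $\eta \equiv 1$ on $B_\rho$ and $\abs{\gr \eta} \le C(\rho' - \rho)^{-1}$. Testing the weak inequality against $v = \eta^2 \bar u^{2\gamma - 1}$ for $\gamma \ge 1$ (after a standard truncation of $u^+$ at level $N$ and passing to the limit), the ellipticity condition \eqref{ellip} produces the lower bound $(2\gamma - 1)\la \int \eta^2 \bar u^{2\gamma - 2} \abs{\gr \bar u}^2$ from the principal term, the cross term is handled by Cauchy--Schwarz, and the drift integral $\int W \cdot \gr \bar u \, \eta^2 \bar u^{2\gamma - 1}$ is bounded using H\"older with exponents $(q, 2, 2q/(q-2))$ together with the Sobolev embedding $W^{1,2} \hookrightarrow L^{2q/(q-2)}$, where it is crucial that $q > 2$. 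The forcing integrals are treated similarly: H\"older with exponents $(\ell, \ell')$ on the $f$ term and $(m, 2, 2m/(m-2))$ on the $G$ term, combined with Sobolev embedding, yields factors $k^{-1}\norm{f}_{L^\ell(\Om_R)}$ and $k^{-1}\norm{G}_{L^m(\Om_R)}$ which are at most $1$ by the choice of $k$. After absorbing these pieces into the principal term, one arrives at
$$\int_{B_\rho} \abs{\gr (\eta \bar u^\gamma)}^2 \le \frac{C \gamma^2}{(\rho' - \rho)^2}\int_{B_{\rho'}} \bar u^{2\gamma},$$
with $C$ depending only on $\la, \La, q$, and $\norm{W}_{L^q(\Om_R)}$.

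Applying the Sobolev embedding to $\eta \bar u^\gamma$ with a gain factor $\chi > 1$ converts this into a reverse-H\"older estimate relating $\norm{\bar u}_{L^{2\gamma \chi}(B_\rho)}$ to $\norm{\bar u}_{L^{2\gamma}(B_{\rho'})}$. Iterating over a geometric sequence of radii collapsing from $R$ down to $r$ with exponents $\gamma_j = (s/2)\chi^j$ and summing the resulting telescoping product (using that $\sum_j \chi^{-j}\log \gamma_j < \iny$) yields $\sup_{B_r} \bar u \le C(R - r)^{-2/s}\norm{\bar u}_{L^s(B_R)}$; unwinding $\bar u = u^+ + k$ and using the quasi-triangle inequality for the $L^s$ norm then produces \eqref{eqB.9}. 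The main technical obstacle is ensuring that the $\norm{W}_{L^q}$-dependent constant that appears in the drift absorption does not blow up through the iteration in $\gamma$. This will be handled by carrying out the Young-type absorption once with a fixed parameter depending only on $\la$, so that the drift contributes only a $\gamma$-independent multiplicative factor at each step; the geometric decay of $\chi^{-j}$ in the iteration parameter then keeps the cumulative product of constants bounded.
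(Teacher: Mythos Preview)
The paper does not prove this lemma; it is stated with a citation to \cite[Lemma~5.1]{DHM16} and used as a black box in Appendix~\ref{AppB}. Your Moser-iteration outline is the standard route to such local boundedness estimates and is essentially what one finds in the cited reference (and in Gilbarg--Trudinger, Theorem~8.17, adapted to unbounded drift), so in that sense your approach matches the intended argument.

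One technical point deserves correction. The test function you write, $v=\eta^2\bar u^{2\gamma-1}$ with $\bar u=u^++k$, is \emph{not} admissible as stated: since $k>0$, $\bar u\ge k$ everywhere, so $\eta^2\bar u^{2\gamma-1}$ does not vanish on $\Si_{2R}$ and hence does not lie in $W^{1,2}_0(\Om_R)$. The truncation at level $N$ you mention addresses integrability at large values of $u^+$ but not this boundary issue. The standard fix is to test with something that carries an undifferentiated factor of $u^+$ itself, for instance $\vp=\eta^2 u^+\bar u^{2(\gamma-1)}$ (with the usual truncation), which vanishes wherever $u^+=0$ and is therefore admissible after zero extension. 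The algebra then proceeds exactly as you describe, and the displayed Caccioppoli inequality (with $C$ depending on $\norm{W}_{L^q}$ but not on $\gamma$, thanks to the Gagliardo--Nirenberg interpolation you invoke) follows. With this adjustment the rest of your iteration goes through.
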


\begin{rem}
Because of assumption \eqref{pos2}, the conclusion of Lemma \ref{l5.1} also holds for the operator $\widetilde{L}^*$.
\end{rem}

The following Caccioppoli inequality will be used in our constructions.

\begin{lem}{\rm\cite[Lemma~4.1]{DHM16}}
If $u \in W_0^{1,2}\pr{\Om}$ is a weak solution to $\widetilde{L} u = 0$ in $\Omega$ and $\zeta\in C^\infty(\R^2)$, then
\begin{align}
\int \abs{D u}^2 \zeta^2 \le C \int \abs{u}^2 \abs{D \zeta}^2,
\label{eqB.10}
\end{align}
where $C$ is a constant that depends on $\ga$, $\La$, $q$, $\norm{W}_{L^{q}\pr{\Om}}$, but $C$ is independent of the sets on which $\zeta$ and $D\zeta$ are supported. 
\label{Cacc}
\end{lem}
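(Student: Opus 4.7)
The natural approach is a Caccioppoli argument: I plan to use $u\zeta^2$ as a test function in the weak equation $B[u, u\zeta^2] = 0$. Since $\Om$ is bounded and $\zeta \in C^\infty(\R^2)$, both $\zeta$ and $\gr\zeta$ are bounded on $\overline{\Om}$, so $u\zeta^2 \in W^{1,2}_0(\Om)$ is admissible. Expanding gives
\[
\int_\Om A\gr u\cdot\gr u\, \zeta^2 + 2\int_\Om A\gr u\cdot \zeta\gr\zeta\, u + \int_\Om W\cdot\gr u\, u\zeta^2 = 0.
\]
Ellipticity \eqref{ellip} controls the first integral from below by $\la\int|\gr u|^2\zeta^2$, and the cross term is routine; the key difficulty is the drift term, for which only $L^q$ control of $W$ is available.

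To handle the drift, I will rewrite
\[
\int_\Om W\cdot\gr u\, u\zeta^2 = \tfrac{1}{2}\int_\Om W\cdot \gr(u^2\zeta^2) - \int_\Om W\cdot u^2\zeta\gr\zeta
\]
and invoke the positivity hypothesis \eqref{pos2} (with $V \equiv 0$) to discard the first integral. The function $u^2\zeta^2$ is non-negative, and a short computation using the planar Sobolev embedding $W^{1,2}_0(\Om)\hookrightarrow L^s(\Om)$ for every finite $s$ (applicable since $q > 2$ forces $q' < 2$) shows that $u^2\zeta^2 \in W^{1,q'}(\Om)$, so \eqref{pos2} applies and yields $\int W\cdot\gr(u^2\zeta^2) \ge 0$.

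What remains is an upper bound on $\int_\Om W\cdot u^2\zeta \gr\zeta$. Applying H\"older with exponents $(q, 2q/(q-2), 2)$, splitting $u^2\zeta\gr\zeta = (u\zeta)(u\gr\zeta)$, and using the Sobolev inequality on $u\zeta \in W^{1,2}_0(\Om)$ estimates this by $C\norm{W}_{L^q(\Om)}(\norm{\gr u\,\zeta}_{L^2} + \norm{u\gr\zeta}_{L^2})\norm{u\gr\zeta}_{L^2}$. Combining with the standard Young's inequality bound for the cross term, one further application of Young's inequality absorbs the $\norm{\gr u\,\zeta}_{L^2}^2$ contribution into the left-hand side and yields \eqref{eqB.10} with a constant depending on $\ga$, $\La$, $q$, and $\norm{W}_{L^q(\Om)}$, as required. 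The essential obstacle, and the reason condition \eqref{pos2} is invoked here, is that a naive H\"older-Sobolev estimate of $\int W\cdot\gr u\, u\zeta^2$ produces a term proportional to $\norm{W}_{L^q}\norm{\gr u\,\zeta}_{L^2}^2$ that could only be absorbed under a smallness condition on $\norm{W}_{L^q}$; the positivity identity instead transfers the derivative off of $u$, converting the problematic $\gr u$ into a harmless $\gr\zeta$.
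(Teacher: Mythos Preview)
Your argument is correct. The test function $u\zeta^2$ is admissible, the identity $W\cdot\gr u\,u\zeta^2 = \tfrac12 W\cdot\gr(u^2\zeta^2) - W\cdot\gr\zeta\,u^2\zeta$ is valid, and your verification that $u^2\zeta^2 \in W^{1,q'}(\Om)$ (using the planar embedding $W^{1,2}_0(\Om)\hookrightarrow L^s(\Om)$ for all finite $s$ together with $q'<2$) is sound, so \eqref{pos2} applies and the sign works out. The remaining H\"older--Sobolev--Young estimate on $\int W\cdot\gr\zeta\,u^2\zeta$ closes as you describe.

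The paper does not prove this lemma; it simply cites \cite[Lemma~4.1]{DHM16}. Since the lemma records the constant as depending on the coercivity constant $\ga$ rather than on $\la$, the cited proof presumably proceeds differently: one tests coercivity \eqref{eqB.8} directly on $u\zeta\in W^{1,2}_0(\Om)$, writes $B[u\zeta,u\zeta] = B[u,u\zeta^2] + (\text{commutator terms})$, uses $B[u,u\zeta^2]=0$, and observes that all commutator terms involve at most one factor of $\gr u$ paired with a $\gr\zeta$, so they can be absorbed. That route packages the positivity condition inside $\ga$ and never invokes \eqref{pos2} explicitly; your route unpacks it. Both are short and yield equivalent conclusions. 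Your closing remark slightly overstates the necessity of \eqref{pos2}: the coercivity identity $B[u\zeta,u\zeta]\ge\ga\norm{u\zeta}_{W^{1,2}}^2$ sidesteps the same obstruction without a smallness hypothesis on $\norm{W}_{L^q}$, though of course $\ga>0$ was itself obtained from \eqref{pos2}.
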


We also rely on a lemma regarding the H\"older continuity of solutions.

\begin{lem}{\rm\cite[Lemma~6.6]{DHM16}}
Let $u \in W^{1,2}\pr{B_{2R_0}}$ be a solution in the sense that $\disp {B}\brac{u, \vp} = 0$ for any $\vp \in W^{1,2}_0\pr{B_{R_0}}$.
Then there exists $\eta \in \pr{0,1}$, such that for any $R \le R_0$, if $x, y \in B_{R/2}$
\begin{align}
\abs{u\pr{x} - u\pr{y}}
&\le C_{R_0} \pr{\frac{\abs{x-y}}{R}}^\eta  \pr{\fint_{B_{R}} \abs{u}^{2^*} }^{\frac 1 {2^*}}.
\label{eqB.11}
\end{align}
\end{lem}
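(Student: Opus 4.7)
The plan is to follow the classical De~Giorgi--Nash--Moser scheme, now applied to the singular-drift setting in which $W \in L^q$ with $q > 2 = n$ is subcritical. First I would apply Lemma~\ref{l5.1} to $\pm u$ on a concentric pair of balls $B_r \su B_{R'} \su B_{R_0}$ with $f \equiv G \equiv 0$ and $s = 2^*$; choosing $r = R/2$, $R' = R$ gives the interior bound
\begin{equation*}
\sup_{B_{R/2}} \abs{u} \le C \pr{\fint_{B_R} \abs{u}^{2^*}}^{1/2^*}.
\end{equation*}
This already supplies the norm appearing on the right-hand side of the claimed estimate, so the remaining task is to convert this $L^\infty$ bound into a H\"older modulus of continuity.

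The heart of the matter is the oscillation decay
\begin{equation*}
\osc_{B_{\rho/4}} u \le \theta \, \osc_{B_\rho} u \quad \text{for some fixed } \theta \in (0,1),
\end{equation*}
valid whenever $B_\rho \su B_{R_0}$. Setting $M := \sup_{B_\rho} u$, $m := \inf_{B_\rho} u$, and $N := M - m$, pigeonhole yields either $\abs{\set{u - m \le N/2} \cap B_\rho} \ge \abs{B_\rho}/2$ or the analogous inequality with $M - u$ in place of $u - m$; without loss of generality the first holds. Then $v := u - m$ is a non-negative solution bounded by $N$ whose sub-level set $\set{v \le N/2}$ has density at least $\tfrac12$ in $B_\rho$. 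The decay estimate then reduces to a density lemma: any such $v$ must satisfy $\sup_{B_{\rho/4}} v \le \theta N$. I would establish this by the standard De~Giorgi truncation scheme, iterating the Caccioppoli inequality (Lemma~\ref{Cacc}) on a nested family of level sets $\set{v > k_j}$ with $k_j \nearrow \theta N$, and using Sobolev embedding to drive $\abs{\set{v > k_j}}$ to zero geometrically. The drift contribution is absorbed into the principal part via H\"older and Sobolev in exactly the way used inside the proof of Lemma~\ref{l5.1}, which is possible precisely because $q > n = 2$ is subcritical.

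Iterating the oscillation decay dyadically gives $\osc_{B_{R/4^k}} u \le \theta^k \osc_{B_R} u$. For $x, y \in B_{R/2}$, choosing $k$ with $4^{-k-1} R \le \abs{x-y} \le 4^{-k} R$ and setting $\eta := \log_4(1/\theta) \in (0,1)$ yields
\begin{equation*}
\abs{u(x) - u(y)} \le \osc_{B_{4^{-k} R}(x)} u \le C \pr{\abs{x-y}/R}^\eta \osc_{B_R} u \le C \pr{\abs{x-y}/R}^\eta \sup_{B_R} \abs{u}.
\end{equation*}
Combining with the $L^\infty$ bound from the first step (applied on a slightly enlarged pair, absorbing the mismatch into $C_{R_0}$) produces the stated inequality. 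The principal obstacle is the density lemma in the presence of the singular drift; the saving feature is that subcriticality $q > n$ keeps the drift a genuinely lower-order perturbation of the principal part, so that the classical truncation machinery goes through with only the modifications already present in Lemma~\ref{l5.1}. Accordingly, $\eta$ and $C_{R_0}$ depend only on $\la$, $\La$, $q$, $R_0$, and $\norm{W}_{L^q(B_{2R_0})}$.
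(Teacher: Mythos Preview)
The paper does not supply its own proof of this lemma; it is quoted verbatim from \cite[Lemma~6.6]{DHM16} and used as a black box in the Green's function construction. Your De~Giorgi--Nash--Moser outline is the standard route and is essentially what that reference carries out: local boundedness from Lemma~\ref{l5.1}, oscillation decay via a density argument, then dyadic iteration to extract the H\"older exponent.

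One small elision worth flagging: in your oscillation-decay step, passing from ``the sub-level set $\{v \le N/2\}$ has density $\ge 1/2$ in $B_\rho$'' to ``the level-set iteration can be launched'' requires an intermediate measure-theoretic step---the De~Giorgi isoperimetric lemma (or an equivalent density-improvement iteration)---to drive the measure of a \emph{higher} super-level set below the threshold at which the truncation iteration converges. Your sketch jumps over this, but it is routine and needs no new ideas beyond those already present. The observation that $u - m$ is again a solution (since constants are annihilated by $\widetilde L = -\di(A\gr\,\cdot\,) + W\cdot\gr$) is correct and is what makes the symmetry between $u-m$ and $M-u$ work cleanly here.
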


\subsection*{Green's functions}

This subsection resembles the work done in \cite{HK07} and \cite{DHM16}.
We use the properties of our operator as well as the properties of solutions to $\widetilde{L}u = \bf F$ or $\widetilde{L}^* u = \bf F$ described above to establish existence, uniqueness, and a collection of a priori estimates for the Dirichlet Green's function associated to $\Om \su \R^2$.
We follow closely the arguments in \cite{HK07} and \cite{DHM16}, adapting to $n = 2$.
As previously mentioned, our estimates are not sharp since we do not obtain logarithmic bounds for the Green's functions.

First, we clarify the meaning of the Green's function.

\begin{defn} 
Let $\Omega$ be an open, connected, bounded subset of $\R^2$.
We say that the function $\Ga\pr{x,y}$ defined on the set $\set{(x,y)\in \Omega \times \Omega: x\ne y}$ is the \textbf{Green's function} of $\widetilde{L}$ if it satisfies the following properties:
\begin{itemize}
\item[1)] $\Ga\pr{\cdot, y}$ is locally integrable and $\widetilde{L}\Ga\pr{\cdot, y} = \de_y I$ for all $y \in \Omega$ in the sense that for every $\phi \in C^\iny_c\pr{\Omega}$,
\begin{equation}
B\brac{\Ga\pr{\cdot, y}, \phi} = \phi\pr{y}.
\label{eqB.12}
\end{equation}
\item[2)] For all $y\in \Om$ and $r > 0$, $\Ga\pr{\cdot, y}\in W^{1,2}\pr{\Om \setminus  \Omega_r\pr{y}}$.  
In addition, $\Ga(\cdot,  y)$ vanishes on $\partial \Omega$ in the sense that for every $\zeta \in C_c^\infty\pr{\Om}$ satisfying $\zeta \equiv 1$ on $B_r(y)$ for some $r>0$, we have
\begin{equation}
(1-\zeta)\Ga(\cdot, y) \in W_0^{1,2}\pr{\Om \setminus \Omega_r(y)}.
\label{eqB.13}
\end{equation}
\item[3)] For some $\ell_0 \in \pb{1, \iny}$ and $m_0 \in \pb{2, \iny}$, and any $f \in L^{\ell_0}\pr{\Om}$, $G \in L^{m_0}\pr{\Om}$, the function $u$ given by
\begin{equation}
u\pr{y} = \int_{\Om} \brac{ \Ga\pr{x,y} f\pr{x} + D_x \Ga\pr{x, y} \cdot G\pr{x} } dx
\label{eqB.14}
\end{equation}
belongs to $W^{1,2}_0\pr{\Om}$ and satisfies $\widetilde{L}u = f - \di G$ in the sense that for every $\phi  \in C^\iny_c\pr{\Om}$,
\begin{align}
&B\brac{u, \phi}
= \int_{\Om} f \phi + G \cdot D \phi.
\label{eqB.15}
\end{align}
\end{itemize}
We say that the function $\Ga\pr{x,y}$ is the \textbf{continuous Green's function} if it satisfies the conditions above and is also continuous.
\label{d3.1}
\end{defn}

We show here that there is at most one Green's function.  
In general, we mean uniqueness in the sense of Lebesgue, i.e. almost everywhere uniqueness.
However, when we refer to the continuous Green's function, we mean true pointwise equivalence.

Assume that $\Ga$ and $\widetilde \Ga$ are Green's functions satisfying Definition \ref{d3.1}. 
Then, for all $f \in L^\infty\pr{\Om}$, the functions $u$ and $\widetilde{u}$ given by
\begin{equation*}
u\pr{y}
=\int_{\Om} \Ga\pr{x,y} f\pr{x} dx, \quad 
\widetilde{u}\pr{y}
=\int_{\Om} \widetilde{\Ga}\pr{x,y} f\pr{x} dx
\end{equation*}
satisfy
\begin{equation*}
\widetilde{L}^* \pr{u - \widetilde{u}}=0 \quad \text{in } \Om
\end{equation*}
and $u-\widetilde{u}\in W^{1,2}_0(\Om)$.  
By uniqueness of solutions ensured by the Lax-Milgram lemma, $u-\widetilde{u}\equiv 0$.  
Thus, for a.e. $x\in \Om$,
\begin{equation*}
\int_{\Om} \brac{\Ga(x,y)-\widetilde{\Ga}(x,y)} f(x) \, dx=0, \quad 
\forall f\in L^\infty(\Om).
\end{equation*}
Therefore, $\Ga = \widetilde \Ga$ a.e. in $\set{x \ne y}$.
If we further assume that $\Ga$ and $\widetilde \Ga$ are continuous Green's functions, then we conclude that $\Ga \equiv \widetilde \Ga$ in $\set{x \ne y}$.

\begin{thm}
\label{t3.2} 
Let $\Om$ be an open, connected, bounded subset of $\R^2$.  
Then there exists a unique continuous Green's function $\Ga(x,y)$, defined in $\set{x,y\in \Om, x\neq y}$, that satisfies Definition \ref{d3.1}.  
We have $\Ga(x,y)=\Ga^*(y,x)$, where $\Ga^*$ is the unique continuous Green's function associated to $\widetilde{L}^*$.  
Furthermore, $\Ga(x,y)$ satisfies the following estimates:
\begin{align}
& \norm{\Ga\pr{\cdot, y}}_{W^{1,2}\pr{\Om \setminus \Om_r\pr{y}}}
+\norm{\Ga\pr{x,\cdot}}_{W^{1,2}\pr{\Om \setminus \Om_r\pr{x}}} 
\le C r^{-\eps}, \qquad \forall r>0,
\label{eqB.16} \\
& \norm{\Ga\pr{\cdot, y}}_{L^{s}\pr{\Om_r\pr{y}}}
+\norm{\Ga\pr{x,\cdot}}_{L^{s}\pr{\Om_r\pr{x}}} 
\le C_s r^{- \eps+ \frac{2}{s}}, \qquad \forall r>0, \quad \forall s \in [1,\iny),
\label{eqB.17} \\
& \norm{D \Ga\pr{\cdot, y}}_{L^{s}\pr{\Om_r\pr{y}}}
+\norm{D \Ga\pr{x,\cdot}}_{L^{s}\pr{\Om_r\pr{x}}} 
\le C_s r^{-1-\eps +\frac{2}{s}}, \qquad \forall r>0, \quad \forall s \in [ 1, 2), 
\label{eqB.18} \\
& \abs{\set{x \in \Om : \abs{\Ga\pr{x,y}} > \tau }}
+\abs{\set{y \in \Om : \abs{\Ga\pr{x,y}} > \tau }} 
\le C \tau^{- \frac{2}{\eps}}, \qquad \forall \tau >0,
\label{eqB.19} \\
& \abs{\set{x \in \Om : \abs{D_x \Ga\pr{x,y}} > \tau }}
+\abs{\set{y \in \Om : \abs{D_y \Ga\pr{x,y}} > \tau }} 
\le C \tau^{- \frac{2}{1+\eps}}, \qquad \forall \tau >0,
\label{eqB.20} \\
& \abs{\Ga\pr{x,y}} \le C |x-y|^{-\eps} \qquad \forall x \ne y,
\label{eqB.21} 
\end{align}
where in each case, $\eps > 0$ is some arbitrarily small number that may vary from line to line.
Moreover, each constant depends on $\ga$, $\La$, $\eps$, and the constants from \eqref{eqB.9} and \eqref{eqB.10}, and each $C_s$ depends additionally on $s$.
Moreover, for any $0<R\le R_0$,
\begin{align}
& \abs{\Ga\pr{x,y} - \Ga\pr{z,y}} 
\le C_{R_0} C \pr{\frac{|x-z|}{R}}^\eta R^{-\eps},
\label{eqB.22}
\end{align}
whenever $|x-z|<\frac{R}{2}$ and
\begin{align}
& \abs{\Ga\pr{x,y} - \Ga\pr{x,z}} 
\le C_{R_0} C \pr{\frac{|y-z|}{R}}^\eta R^{-\eps},
\label{eqB.23}
\end{align}
whenever $|y-z|<\frac{R}{2}$, where $C_{R_0}$ and $\eta=\eta(R_0)$ are the same as in \eqref{eqB.11}.
\end{thm}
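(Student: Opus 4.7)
The plan is to adapt the Gr\"uter--Widman construction, as modernized by Hofmann--Kim and Dong--Hofmann--Mitrea (\cite{DHM16}), to the present setting of a divergence--form operator with a subcritical drift $W\in L^q(\Om)$, $q>2$. The coercivity \eqref{eqB.8} supplied by \eqref{pos2} plays the role of the coercivity of the pure leading operator in those references, and the drift can be absorbed using H\"older and Sobolev inequalities exactly as is already done in Lemmas \ref{l5.1} and \ref{Cacc}. Uniqueness of the continuous representative (and of $\Ga$ in the Lebesgue sense) has been handled by the Lax--Milgram argument appearing in the discussion just before the theorem, so the remaining task is existence together with the estimates \eqref{eqB.16}--\eqref{eqB.23} and the identity $\Ga(x,y)=\Ga^\ast(y,x)$.

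First I would fix $y\in\Om$ and $\rho\in(0,\dist(y,\partial\Om))$ and define $\Ga^\rho(\cdot,y)\in W^{1,2}_0(\Om)$ as the unique Lax--Milgram solution of
\[
B\brac{\Ga^\rho(\cdot,y),\phi}=\frac{1}{|B_\rho(y)|}\int_{B_\rho(y)}\phi\qquad\text{for all }\phi\in W^{1,2}_0(\Om),
\]
whose existence is immediate from \eqref{eqB.7}--\eqref{eqB.8}. Define $\Ga^{\ast,\rho'}(\cdot,x)$ analogously for $\widetilde L^\ast$, using \eqref{eqB.6}. Testing the equation for $\Ga^\rho(\cdot,y)$ against $\Ga^{\ast,\rho'}(\cdot,x)$ and vice versa gives the two--parameter identity
\[
\fint_{B_\rho(y)}\!\Ga^{\ast,\rho'}(\cdot,x)\;=\;B\brac{\Ga^\rho(\cdot,y),\Ga^{\ast,\rho'}(\cdot,x)}\;=\;\fint_{B_{\rho'}(x)}\!\Ga^\rho(\cdot,y),
\]
which, after passing to the limit, will provide the adjoint symmetry $\Ga(x,y)=\Ga^\ast(y,x)$ and simultaneously validate the representation formula \eqref{eqB.14}--\eqref{eqB.15}, since the right--hand side of \eqref{eqB.14} is exactly the pairing of $f-\di G$ with the limit of $\Ga^{\ast,\rho'}(\cdot,x)$.

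The heart of the proof, and the principal technical obstacle, is obtaining estimates on $\Ga^\rho$ that are uniform in $\rho$ and have the form claimed in \eqref{eqB.16}--\eqref{eqB.21}. Here I would follow the truncation scheme of \cite{DHM16}: for each $\tau>0$ one uses $[\Ga^\rho-\tau]_+\eta^2$ (for suitable cutoffs $\eta$) as a test function, invokes the Caccioppoli inequality \eqref{eqB.10} on the super--level sets, and absorbs the drift contribution via H\"older with exponents dictated by $q>2$. In dimensions $n\ge 3$ this yields the sharp pointwise bound $|\Ga|\lesssim|x-y|^{2-n}$, but in the present two--dimensional setting the natural Sobolev embedding $W^{1,2}\hookrightarrow L^s$ holds only for $s<\infty$ with constants that blow up as $s\to\infty$, so the same iteration loses a small amount of decay and produces only the weak power bounds $|\Ga|\lesssim |x-y|^{-\eps}$ of \eqref{eqB.21} and the companion integral bounds \eqref{eqB.16}--\eqref{eqB.20}, with $\eps>0$ arbitrarily small. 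Making this two--dimensional iteration quantitative while keeping $W$ under control is the real work of the proof; fortunately Lemmas \ref{l5.1} and \ref{Cacc} are tailored to provide exactly the required ingredients, so no further analytic tools are needed.

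Once the uniform estimates are in hand, a diagonal subsequence argument together with weak compactness in $W^{1,2}$ on $\Om\setminus\Om_r(y)$ (for every $r>0$) extracts a limit $\Ga(\cdot,y)$ satisfying \eqref{eqB.13}, and the weak--$\ast$ convergence $|B_\rho(y)|^{-1}\chi_{B_\rho(y)}\rightharpoonup\delta_y$ delivers the defining equation \eqref{eqB.12}. The bounds \eqref{eqB.16}--\eqref{eqB.21} descend to the limit by lower semicontinuity. For continuity and the H\"older estimates \eqref{eqB.22}--\eqref{eqB.23}, I would apply the interior regularity statement \eqref{eqB.11} to $\Ga^\rho(\cdot,y)$ on any ball disjoint from $y$ (and its boundary analogue near $\partial\Om$), combine with the $L^{2^\ast}$ control coming from \eqref{eqB.17}, and pass to the limit. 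The symmetry $\Ga(x,y)=\Ga^\ast(y,x)$ then follows by sending $\rho,\rho'\to 0$ in the identity displayed above and using the joint continuity just established.
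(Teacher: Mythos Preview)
Your proposal is correct and the overall scaffolding matches the paper exactly: averaged Green's functions $\Gamma^\rho$ via Lax--Milgram, uniform-in-$\rho$ estimates, weak compactness plus diagonalization to extract the limit, verification of \eqref{eqB.12}--\eqref{eqB.15}, H\"older continuity from \eqref{eqB.11}, and the two-parameter pairing for $\Gamma(x,y)=\Gamma^\ast(y,x)$.

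The one place where the paper takes a different tack is the derivation of the core local bounds on $\Gamma^\rho$. Rather than the level-set/truncation test functions $[\Gamma^\rho-\tau]_+\eta^2$ that you describe, the paper argues by \emph{duality}: for $f$ (resp.\ $G$) supported in $\Omega_r(y)$ it solves $\widetilde L^\ast u_1=f$ (resp.\ $\widetilde L^\ast u_2=-\di G$), bounds $\|u_i\|_{L^\infty(\Omega_{r/2}(y))}$ via the Moser estimate of Lemma~\ref{l5.1}, and then reads the $L^s$ norm of $\Gamma^\rho$ and $D\Gamma^\rho$ directly from the identity $\int f\,\Gamma^\rho+G\cdot D\Gamma^\rho=\fint_{B_\rho(y)}(u_1+u_2)$. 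The two-dimensional $r^{-\eps}$ loss then enters transparently through the Sobolev embedding $W^{1,2}_0\hookrightarrow L^{2^\ast}$ with finite $2^\ast$. After that, Lemma~\ref{l5.1} applied to $\Gamma^\rho$ itself on a ball away from the pole gives the pointwise bound, and Caccioppoli plus the usual layer-cake computation yields \eqref{eqB.16}--\eqref{eqB.20}. The duality route sidesteps any sign issues for $\Gamma^\rho$ and is a bit cleaner in the presence of the drift, but your truncation approach is also standard and would reach the same conclusions.
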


\begin{pf}[Proof of Theorem~\ref{t3.2}] 
Let $u \in W^{1,2}_0\pr{\Om}$. 
Fix $y\in \Om$, $\rho > 0$, and consider the linear functional
$$u \mapsto \fint_{B_\rho(y)} u.$$
By  the H\"older inequality and Sobolev embedding with $2^* \in \pr{2, \iny}$,
\begin{align}
\abs{ \fint_{B_\rho(y)} u}
&\le \frac{1}{\abs{B_\rho\pr{y}}} \int_{B_\rho\pr{y}} \abs{u}
\le \abs{B_\rho\pr{y}}^{-\frac{1}{2^*}} \pr{\int_{\Om} \abs{u}^{2^*} }^{\frac{1}{2^*}} 
\le c \abs{B_\rho\pr{y}}^{-\frac{1}{2^*}} \pr{\int_{\Om} \abs{Du}^{2} }^{\frac{1}{2}}  \nonumber \\
&\le C \rho^{-\frac{2}{2^*}} \norm{u}_{W^{1,2}\pr{\Om}}.
\label{eqB.24}
\end{align}
Therefore, the functional is bounded on $W^{1,2}_0\pr{\Om}$, and by the Lax-Milgram theorem there exists a unique $\Ga^\rho = \Ga^\rho_y = \Ga^\rho\pr{\cdot, y} \in W^{1,2}_0\pr{\Om}$ satisfying 
\begin{equation}
\label{eqB.25}
{B}[\Ga^\rho, u] = \fint_{B_\rho(y)} u = \frac{1}{\abs{B_\rho(y)}} \int_{B_\rho(y)} u, \quad \forall \, u \in W^{1,2}_0\pr{\Om}.
\end{equation}
By the coercivity of $A$ given by \eqref{eqB.8} along with \eqref{eqB.24}, we obtain,
$$\ga \norm{\Ga^\rho}^2_{W^{1,2}\pr{\Om}}
\le {B}\brac{\Ga^\rho,\Ga^\rho}
= \abs{\fint_{B_\rho(y)} \Ga^\rho}
\le C \rho^{-\frac{2}{2^*}} \norm{\Ga^\rho}_{W^{1,2}\pr{\Om}}$$
so that for any $\eps \in \pr{0, 1}$,
\begin{equation}
\label{eqB.26}
\norm{D\Ga^\rho}_{L^2\pr{\Om}} 
\le C \rho^{-\eps}.
\end{equation}

For some $\ell_0 \in \pb{1, \iny}$ and $m_0 \in \pb{2, \iny}$, let $f \in L^{\ell_0}\pr{\Om}$ and $G \in L^{m_0}\pr{\Om}$.
For any $\ell \in \pb{1, \ell_0}$ and any $m \in \pb{1, m_0}$, it is clear that that $f \in L^\ell\pr{\Om}$ and $G \in L^m\pr{\Om}$ since $\Om$ is bounded. 
Consider the linear functionals
\begin{align*}
& W^{1,2}_0\pr{\Om} \ni w \mapsto \int_\Om f w \\
& W^{1,2}_0\pr{\Om} \ni w \mapsto \int_\Om G \cdot D w.
\end{align*}
The first functional is bounded on $W^{1,2}_0\pr{\Om}$ since for every $w \in W^{1,2}_0\pr{\Om}$ and every $\ell \in \pb{1, \ell_0}$,
\begin{align}
\abs{\int_{\Om} f \, w }
&\le \norm{f}_{L^{\ell}\pr{\Om}} \norm{w}_{L^{2^*}\pr{\Om}}\abs{\supp f}^{1 - \frac {1}{2^*} - \frac 1{\ell}} 
\le c \norm{f}_{L^{\ell}\pr{\Om}}\abs{\supp f}^{1 - \frac {1}{2^*} - \frac 1{\ell}} \norm{Dw}_{L^{2}\pr{\Om}},
\label{eqB.27}
\end{align}
where we have again used Sobolev embedding with some $2^* \in \pr{2, \iny}$.
Similarly, we see that the second functional is also bounded on $W^{1,2}_0\pr{\Om}$ since for every $m \in \pb{2, m_0}$
\begin{align}
\abs{\int_\Om G \cdot D w }
&\le \norm{G}_{L^{m}\pr{\Om}}\abs{\supp G}^{ \frac {1}{2} - \frac 1{m}}  \norm{Dw}_{L^{2}\pr{\Om}}.
\label{eqB.28}
\end{align}
Once again, by Lax-Milgram, we obtain $u_1, u_2 \in W^{1,2}_0\pr{\Om}$ such that
\begin{equation}
\label{eqB.29}
{B}^*\brac{u_1, w}=\int_\Om f \, w, \quad \forall \, w \in W^{1,2}_0\pr{\Om}
\end{equation}
and
\begin{equation}
\label{eqB.30}
{B}^*\brac{u_2, w}=\int_\Om G \cdot D w, \quad \forall \, w \in W^{1,2}_0\pr{\Om}.
\end{equation}

Set $w=u_1$ in \eqref{eqB.29} and use the coercivity assumption, \eqref{eqB.8}, for ${B}^*$ along with \eqref{eqB.27} to get
\begin{equation}
\label{eqB.31}
\norm{Du_1}_{L^2\pr{\Om}} \le C \norm{f}_{L^{\ell}\pr{\Om}}\abs{\supp f}^{ 1 - \frac {1}{2^*} - \frac 1{\ell}}.
\end{equation}
With $w=u_2$ in \eqref{eqB.30}, we similarly obtain from \eqref{eqB.28} that
\begin{equation}
\label{eqB.32}
\norm{Du_2}_{L^2\pr{\Om}} \le C \norm{G}_{L^{m}\pr{\Om}}\abs{\supp G}^{ \frac {1}{2} - \frac 1{m}}.
\end{equation}
Also, if we take $w = \Ga^\rho$ in \eqref{eqB.29} and \eqref{eqB.30}, we get
\begin{equation}
\label{eqB.33}
\int_\Om f \, \Ga^\rho
= {B}^*[u_1, \Ga^\rho]
= {B}[\Ga^\rho, u_1] 
 = \fint_{B_\rho(y)} u_1,
\end{equation}
and
\begin{equation}
\label{eqB.34}
\int_\Om G \cdot D \Ga^\rho
= {B}^*[u_2, \Ga^\rho]
= {B}[\Ga^\rho, u_2] 
 = \fint_{B_\rho(y)} u_2.
\end{equation}
In particular, with $u:= u_1 + u_2$, we see that
\begin{equation}
\label{eqB.35}
\int_\Om f \, \Ga^\rho + G \cdot D \Ga^\rho
 = \fint_{B_\rho(y)} u.
\end{equation}

Now assume that $f$ and $G$ are supported in $\Om_r(y)$, for some $r > 0$.
Let $u_1, u_2$ be as in \eqref{eqB.29}, \eqref{eqB.30}, respectively.  
Since $u_1, u_2 \in W^{1,2}_0\pr{\Om}$, then $u_1, u_2 \in W^{1,2}\pr{\Om_{2r}}$ and $u_1, u_2 = 0$ on $\Si_{2r}$ so that Lemma \ref{l5.1} is applicable. 
Then, by \eqref{eqB.9} with some $s = 2^* \in \pr{2, \iny}$
$$\norm{u_1}^2_{L^\infty\pr{\Om_{r/2}\pr{y}}} 
\le C \pr{ r^{-\frac{4}{2^*}} \norm{u_1}^2_{L^{2^*}\pr{\Om_r\pr{y}}} + r^{4 - \frac{4}{\ell}} \norm{f}^2_{L^{\ell}\pr{\Om_r\pr{y}}}}$$
and
$$\norm{u_2}^2_{L^\infty\pr{\Om_{r/2}\pr{y}}} 
\le C \pr{ r^{-\frac{4}{2^*}} \norm{u_2}^2_{L^{2^*}\pr{\Om_r\pr{y}}} + r^{2 - \frac{4}{m}} \norm{G}^2_{L^{m}\pr{\Om_r\pr{y}}}}.$$
By Sobolev embedding and \eqref{eqB.31} with $\supp f \subset \Om_r\pr{y}$,
\begin{align*}
\norm{u_1}^2_{L^{2^*}\pr{\Om_r\pr{y}}} 
&\le \norm{u_1}^2_{L^{2^*}\pr{\Om}} 
\le C \norm{Du_1}^2_{L^2\pr{\Om}}
\le C \norm{f}^2_{L^{\ell}\pr{\Om}}\abs{ \Om_r\pr{y}}^{2 - \frac 2 {2^*} - \frac 2{\ell}} 
\le C r^{4 - \frac {4}{2^*} - \frac 4 \ell}  \norm{f}^2_{L^{\ell}\pr{\Om}} .
\end{align*}
Combining the previous two inequalities for $u_1$, we see that
$$\norm{u_1}^2_{L^\infty\pr{\Om_{r/2}\pr{y}}} 
\le C  \pr{1 + r^{-\frac{8}{2^*}}} r^{4 - \frac{4}{\ell}} \norm{f}^2_{L^{\ell}\pr{\Om_r\pr{y}}}.$$
For any $\ell \in \pb{1, \ell_0}$, choose $2^* \in \pr{2, \iny}$ so that $\frac 2 {2^*} < 1 - \frac 1 \ell$.
Since $\Om$ is bounded, then so too is $r$, and we have
\begin{align}
\norm{u_1}_{L^\infty \pr{\Om_{r/2}\pr{y}}} 
&\le C r^{2-\frac{2}{\ell} - \frac 4 {2^*}} \norm{f}_{L^{\ell}\pr{\Om}} 
= C r^{2-\frac{2}{\ell} - \frac 4 {2^*}} \norm{f}_{L^{\ell}\pr{\Om_r\pr{y}}}.
\label{eqB.36}
\end{align}
Mimicking the argument with $u_2$, $G$ and \eqref{eqB.32}, we see that
$$\norm{u_2}^2_{L^\infty\pr{\Om_{r/2}\pr{y}}} 
\le C  \pr{1 + r^{-\frac{4}{2^*}}} r^{2 - \frac{4}{m}} \norm{G}^2_{L^{m}\pr{\Om_r\pr{y}}}.$$
Now for any $m \in \pb{2, m_0}$, choose $2^* \in \pr{2, \iny}$ so that $\frac 2 {2^*} < 1 - \frac 2 m$ and we conclude that
\begin{align}
\norm{u_2}_{L^\infty \pr{\Om_{r/2}\pr{y}}} 
&\le C r^{1-\frac{2}{m} - \frac 2 {2^*}} \norm{G}_{L^{m}\pr{\Om}} 
= C r^{1-\frac{2}{m} - \frac 2 {2^*}} \norm{G}_{L^{m}\pr{\Om_r\pr{y}}}.
\label{eqB.37}
\end{align}

By \eqref{eqB.33} and \eqref{eqB.36}, if $\rho \le r/2$, we have that for every $\ell \in \pb{1, \ell_0}$,
$$\abs{\int_{\Om_r\pr{y}} f \, \Ga^\rho}
= \abs{\int_\Om f \, \Ga^\rho}
\le \fint_{B_\rho(y)} \abs{u_1}
\le \norm{u_1}_{L^\infty(B_\rho(y))}
\le \norm{u_1}_{L^\infty(\Om_{r/2}(y))} 
\leq Cr^{2-\frac{2}{\ell} - \frac 4 {2^*}} \norm{f}_{L^{\ell}\pr{\Om_r\pr{y}}}.$$
By duality, since we can take $\ell_0 = \iny$, this implies that for $r > 0$,
\begin{equation}
\label{eqB.38}
\norm{\Ga^\rho}_{L^s\pr{\Om_r\pr{y}}}
\le C r^{\frac{2}{s} - \eps}, \quad \mbox{ for all } \rho \le \frac{r}{2}, \quad \forall s \in \brp{1,\iny}.
\end{equation}
We similarly conclude that
\begin{equation}
\label{eqB.39}
\norm{D\Ga^\rho}_{L^s\pr{\Om_r\pr{y}}}
\le C r^{\frac{2}{s} - 1 - \eps}, \quad \mbox{ for all } \rho \le \frac{r}{2}, \quad \forall s \in \brp{1,2}.
\end{equation}
Note that in both cases, $\eps \in \pr{0, 1}$ is chosen so that the power on $r$ is positive.

Fix $x\ne y$ and set $r:= \frac{4}{3} |x-y|$.  
For $\rho \le r/2$, $\Ga^\rho$ is a weak solution to $\widetilde{L} \Ga^\rho=0$ in $\Om_{r/4}(x)$.  
Moreover, since $\Ga^\rho \in W^{1,2}_0\pr{\Om}$, then $\Ga^\rho \in W^{1,2}\pr{\Om_{r/2}\pr{x}}$ and  $\Ga^\rho = 0$ on $\Si_{r/2}\pr{x}$, so we may use Lemma \ref{l5.1}.
Thus, applying \eqref{eqB.9} and \eqref{eqB.38} with $s=1$, we get for a.e. $x \in \Om$ as above, 
\begin{equation}
\label{eqB.40}
\abs{\Ga^\rho(x)}
\le C r^{-2} \norm{\Ga^\rho}_{L^1\pr{\Om_{r/4}\pr{x}}} 
\le C r^{-2} \norm{\Ga^\rho}_{L^1\pr{\Om_{r}\pr{y}}} 
\le Cr^{-\eps}
\approx  |x-y|^{-\eps}.
\end{equation}
 
Now, for any  $r > 0$ and $\rho\le r/{2}$, let $\zeta$ be a cut-off function such that
\begin{equation}
\label{eqB.41}
\zeta \in C^{\iny}(\R^n), \quad
0\le \zeta \le 1, \quad 
\zeta \equiv 1 \text{ outside $B_{r}(y)$}, \quad 
\zeta \equiv 0 \text{ in $B_{r/2}(y)$}, \quad 
\text{and} \;  \abs{D \zeta} \le C/r.
\end{equation}
Then the Caccioppoli inequality of Lemma \ref{Cacc} implies that
\begin{equation}
\label{eqB.42}
\int_{\Om} \zeta^2 \abs{D\Ga^\rho}^2 
\le C \int_{\Om} \abs{D\zeta}^2 \abs{\Ga^\rho}^2 
\le C r^{-2} \int_{\Om_{r}(y)\setminus \Om_{r/2}(y)} \abs{\Ga^\rho}^2, \quad \forall \rho \le \frac{r}{2}.
\end{equation}
Combining \eqref{eqB.42} and \eqref{eqB.40}, we have for all $r > 0$ and $\zeta$ as above,
\begin{align}
\label{eqB.43}
\begin{split}
\int_{\Om} \abs{D(\zeta \Ga^\rho)}^2 
&\le 2 \int_{\Om} \zeta^2 \abs{D\Ga^\rho}^2 + 2 \int_{\Om} \abs{D\zeta}^2 \abs{\Ga^\rho}^2 \\
&\le C r^{-2} \int_{\Om_{r}(y)\setminus \Om_{r/2}(y)} \abs{\Ga^\rho}^2 
\le C r^{-2\eps}
, \quad \forall \rho\le \frac{r}{2}.
\end{split}
\end{align}
It follows from Sobolev embedding with arbitrary $2^* \in \pr{2, \iny}$ and \eqref{eqB.43} that for $r > 0$,
\begin{equation*}
\int_{\Omega \setminus \Om_r(y)} \abs{\Ga^\rho}^{2^*} 
\le \int_{\Omega} \abs{\zeta \Ga^\rho}^{2^*} 
\le c \pr{\int_{\Omega} \abs{D\pr{\zeta \Ga^\rho}}^2}^{\frac{2^*}{2}} 
\le C r^{-2^*\eps}
, \quad \forall \rho \le \frac{r}{2}.
\end{equation*}
On the other hand, if $\rho > \frac r 2$, then \eqref{eqB.26} implies that
\begin{equation*}
\int_{\Om \setminus \Om_r(y)} \abs{\Ga^\rho}^{2^*} 
\le \int_{\Om} \abs{\Ga^\rho}^{2^*} 
\le c \pr{ \int_{\Om} \abs{D\Ga^\rho}^2 }^{\frac{2^*}{2}} 
\le C r^{-2^*\eps}.
\end{equation*}
Therefore, combining the previous two results, we have that for any $\eps \in \pr{0, 1}$
\begin{equation}
\label{eqB.44}
\int_{\Om\setminus \Om_r(y)} |\Ga^\rho|^{2^*} 
\le C r^{-2^*\eps}, \quad \forall \, r, \rho > 0.
\end{equation}

For any $\eps \in \pr{0 ,1}$, $2^* \in \pr{2, \iny}$, fix $\tau > 0$.  
Let $A_\tau=\set{x\in \Om: \abs{\Ga^\rho}>\tau}$ and set $r=\tau^{-\frac{2^*}{2 + 2^* \eps}}$.  
Then, using \eqref{eqB.44}, we see that if $\rho > 0$,
\begin{equation*}
\abs{A_\tau\setminus \Om_r(y)}
\le \tau^{-2^*}\int_{A_\tau\setminus \Om_r(y)} \abs{\Ga^\rho}^{2^*} 
\le C \tau^{-2^*} r^{-2^*\eps}
= C \tau^{-\frac{2^* 2}{2 + 2^* \eps}}.
\end{equation*}
Since $\abs{A_\tau \cap \Om_r(y)} \le \abs{\Om_r(y)} \le Cr^2 = C \tau^{-\frac{2^* 2}{2 + 2^* \eps}}$, we have
\begin{equation}
\label{eqB.45}
\abs{\set{ x\in \Om: \abs{\Ga^\rho(x)} > \tau }}
\le C  \tau^{-\frac{2^* 2}{2 + 2^* \eps}} \quad \forall \, \rho > 0.
\end{equation}

Fix $r > 0$ and let $\zeta$ be as in \eqref{eqB.41}.  
Then \eqref{eqB.43} gives
\begin{equation*}
 \int_{\Om \setminus \Om_{r}(y)} \abs{D\Ga^\rho}^2 
 \le C r^{-2\eps}
 , \quad \forall r > 0, \quad \forall \rho\le \frac{r}{2}.
\end{equation*}
Now, if $\rho > \frac r 2$, we have from \eqref{eqB.26} that
\begin{equation*}
 \int_{\Om\setminus \Om_r(y)} \abs{D\Ga^\rho}^2 
 \le \int_{\Om} \abs{D\Ga^\rho}^2 
 \le C \rho^{-2\eps}
 \le C r^{-2\eps}.
\end{equation*}
Combining the previous two results yields
\begin{equation}
\label{eqB.46}
 \int_{\Om\setminus \Om_r(y)} \abs{D\Ga^\rho}^2 
 \le C r^{-2\eps}, \quad \forall \, r, \rho > 0.
\end{equation}

Fix $\tau > 0$.  
Let $A_\tau = \set{x \in \Om : \abs{D\Ga^\rho} > \tau}$ and set $r=\tau^{-\frac{1}{1+\eps}}$.  
Then, using \eqref{eqB.46}, we see that if $\rho > 0$,
$$\abs{A_\tau \setminus \Om_r(y)} 
\le \tau^{-2} \int_{A_\tau\setminus \Om_r(y)} \abs{D\Ga^\rho}^2 
\le C \tau^{-2} r^{-2\eps}
= C \tau^{-\frac{2}{1+\eps}}.$$
Since $\abs{A_\tau \cap \Om_r(y)} \le C r^2 = C\tau^{-\frac{2}{1+\eps}}$, then
\begin{equation}
\label{eqB.47}
 \abs{\set{x\in \Om: \abs{D\Ga^\rho(x)} > \tau }} \le C \tau^{-\frac{2}{1+\eps}}
 \quad \forall \, \rho > 0.
\end{equation}

For any $\si> 0$ and $s > 0$, we have
\begin{equation*}
\int_{\Om_r(y)} \abs{D\Ga^\rho}^s 
\le \si^s \abs{\Om_r(y)} + \int_{\set{\abs{D\Ga^\rho}>\si}} \abs{D \Ga^\rho}^s.
\end{equation*}
By \eqref{eqB.47}, for $s \in \pr{0,\frac{2}{1+\eps}}$ and $\rho > 0$, 
\begin{align*}
\int_{\set{|D\Ga^\rho|>\si}} \abs{D\Ga^\rho}^s 
&= \int_0^\infty s \tau^{s-1} \abs{\set{\abs{D\Ga^\rho}>\max\set{\tau,\si}}} d\tau \\
&\le C \si^{-\frac{2}{1+\eps}} \int_0^\si s \tau^{s-1} \, d \tau
+ C\int_\si^\infty s \tau^{s-1 -\frac{2}{1+\eps}} \, d\tau
= C \pr{1-\frac{s}{s-\frac{2}{1+\eps}}} \si^{s-\frac{2}{1+\eps}}.
\end{align*}
Therefore, taking $\si=r^{-\pr{1+\eps}}$, we conclude that
\begin{equation}
\label{eqB.48}
 \int_{\Om_r(y)} \abs{D\Ga^\rho}^s 
 \le C_s r^{-s\pr{1+\eps}+2}, \quad \forall \, r, \rho > 0, \quad \forall s\in \pr{0, \tfrac{2}{1+\eps}}.
\end{equation}

Now we repeat the process for $\Ga^\rho$, using \eqref{eqB.45} in place of \eqref{eqB.47}.
For any $\si> 0$ and $s > 0$, we have
\begin{equation*}
\int_{\Om_r(y)} \abs{\Ga^\rho}^s 
\le \si^s \abs{\Om_r(y)} + \int_{\set{\abs{\Ga^\rho}>\si}} \abs{\Ga^\rho}^s.
\end{equation*}
By \eqref{eqB.45}, for $s \in \pr{0, \frac{2^* 2}{2 + 2^* \eps}}$ and $\rho > 0$, 
\begin{align*}
\int_{\set{|\Ga^\rho|>\si}} \abs{\Ga^\rho}^s 
&= \int_0^\infty s \tau^{s-1} \abs{\set{\abs{\Ga^\rho}>\max\set{\tau,\si}}} d\tau \\
&\le C \si^{-\frac{2^* 2}{2 + 2^* \eps}} \int_0^\si s \tau^{s-1} \, d \tau
+ C\int_\si^\infty s \tau^{s-1 - \frac{2^* 2}{2 + 2^* \eps}} \, d\tau \\
&= C \pr{1-\frac{s}{s-\frac{2^* 2}{2 + 2^* \eps}}} \si^{s - \frac{2^* 2}{2 + 2^* \eps}}.
\end{align*}
Taking $\si=r^{-\frac{2 + 2^* \eps}{2^*}}$, we conclude that
\begin{equation}
\label{eqB.49}
 \int_{\Om_r(y)} \abs{\Ga^\rho}^s
 \le C_s r^{-s\frac{2 + 2^* \eps}{2^*}+2}, \quad \forall \, r, \rho > 0, \quad \forall \, s \in \pr{0, \tfrac{2^* 2}{2 + 2^* \eps}}.
\end{equation}

Fix $s\in \brp{1, 2}$ and $\tilde s\in \brp{1, \iny}$.  
There exists $\eps \in \pr{0, 1}$ and $2^* \in \pr{2, \iny}$ so that $s < \frac{2}{1+\eps}$ and $\tilde s < \frac{2^* 2}{2 + 2^* \eps}$.
It follows from \eqref{eqB.48} and \eqref{eqB.49} that for any $r > 0$
\begin{equation}
\label{eqB.50}
\norm{\Ga^\rho}_{W^{1,s}\pr{\Omega_{r}\pr{y}}} \le C\pr{r} \mbox{ and } \norm{\Ga^\rho}_{L^{\tilde s}\pr{\Omega_{r}\pr{y}}} \le C\pr{r}\quad \text{uniformly in } 
\rho.
\end{equation}
Therefore, (using diagonalization) we can show that there exists a sequence $\set{\rho_\mu}_{\mu=1}^\infty$ tending to $0$ and a function $\Ga =\Ga_{y} = \Ga\pr{\cdot, y}$ such that
\begin{equation}
\label{eqB.51}
\Ga^{\rho_\mu} \rightharpoonup \Ga \quad \text{in } 
W^{1,s}\pr{\Om_{r}\pr{y}} \mbox{ and in } L^{\tilde s} \pr{\Om_{r}\pr{y}}, \mbox{ for all } r > 0.
\end{equation}

Furthermore, for fixed $r_0 < r$, \eqref{eqB.44} and \eqref{eqB.46} and that $\Om$ is bounded imply uniform bounds on $\Ga^{\rho_\mu}$ in $W^{1,2}\pr{\Om \setminus \Om_{r_0}\pr{y}}$ for small $\rho_\mu$.
Thus, there exists a subsequence of $\set{\rho_\mu}$ (which we will not rename)  and a function $\widetilde{\Ga}=\widetilde{\Ga}_{y} = \widetilde {\Ga}\pr{\cdot, y}$ such that
\begin{equation*}
\Ga^{\rho_\mu} \rightharpoonup \widetilde{\Ga} \quad \text{in } W^{1,2}\pr{\Om \setminus \Om_{r_0}\pr{y}}.
\end{equation*}
Since $\Ga \equiv \widetilde{\Ga}$ on $\Om_{r}\pr{y}\setminus \Om_{r_0}\pr{y}$, we can extend $\Ga$ to the entire $\Om$ by setting $\Ga=\widetilde{\Ga}$ on $\Om\setminus \Om_{r}\pr{y}$.  
For ease of notation, we call the extended function $\Ga$.  
Applying the diagonalization process again, we conclude that there exists a sequence $\rho_\mu \to 0$ and a function $\Ga$ on $\Omega$ such that for every $s\in \brp{1, 2}$ and $\tilde s\in \brp{1, \iny}$,
\begin{equation}
\label{eqB.52}
\Ga^{\rho_\mu} \rightharpoonup \Ga \quad \text{in } 
W^{1,s}\pr{\Om_{r}\pr{y}} \mbox{ and in }L^{\tilde s}\pr{\Om_{r}\pr{y}},
\end{equation}
and
\begin{equation}
\label{eqB.53}
\Ga^{\rho_{\mu}} \rightharpoonup \Ga \quad \text{in } W^{1,2}\pr{\Om \setminus 
\Om_{r_0}\pr{y}},
\end{equation}
for all $0 < r_0 < r$.

Let $\phi \in C_c^\infty\pr{\Om}$ and $r > 0$.
Choose $\eta \in C^\iny_c\pr{B_r\pr{y}}$ to be a cutoff function so that $\eta \equiv 1$ in $B_{r/2}\pr{y}$.
We write $\phi = \eta \phi + \pr{1 - \eta} \phi$.
By \eqref{eqB.25} and the definition of ${B}$,
\begin{align*}
\lim_{\mu \to \iny} \fint_{B_{\rho_\mu}\pr{y}} &\eta \phi
= \lim_{\mu\to \infty} {B}[\Ga^{\rho_\mu}_y,\eta \phi]
= \lim_{\mu\to \infty} \int_\Om A \gr {\Ga^{\rho_\mu}_y} \cdot \gr \pr{\eta \phi} + W \cdot \gr {\Ga^{\rho_\mu}_y} \, {\eta \phi}.
\end{align*}
Note that $\eta \phi$ and $D\pr{\eta \phi}$ belong to $C^\iny_c\pr{\Om_r\pr{y}}$.  
From this and the boundedness of $A$ given by \eqref{ABd}, it follows that there exists a $s^\prime > 2$ such that each $a_{ij} D_i\pr{\eta \phi}$ belongs to $L^{s^\prime}\pr{\Om_r\pr{y}}$.
Since $W \in L^q\pr{\Om}$ for some $q \in \pb{2, \iny}$, then $W \, \eta \phi \in L^{q}\pr{\Om_r\pr{y}}$.
Therefore, by \eqref{eqB.52},
\begin{align}
\lim_{\mu \to \iny} \fint_{B_{\rho_\mu}\pr{y}} \eta \phi
&= \int_\Om A \gr {\Ga_y} \cdot \gr \pr{\eta \phi} + W \cdot \gr {\Ga_y} \, {\eta \phi}
= {B}[\Ga_{y},\eta \phi] .
\label{eqB.54}
\end{align}
Another application of \eqref{eqB.25} shows that
\begin{align*}
\lim_{\mu \to \iny} \fint_{B_{\rho_\mu}\pr{y}} \pr{1 - \eta} \phi
&= \lim_{\mu\to \infty} \int_\Om A \gr {\Ga^{\rho_\mu}_y} \cdot \gr \brac{\pr{1 -\eta} \phi} + W \cdot \gr {\Ga^{\rho_\mu}_y} \, \pr{1 -\eta} \phi.
\end{align*}
Since $\phi \in C_c^\infty\pr{\Om}$ and $\eta \in C^\iny_c\pr{B_r\pr{y}}$, then $\pr{1 - \eta}\phi$ and $D\brac{\pr{1 - \eta}\phi}$ belong to $C_c^\infty(\Om \setminus B_{r/2}\pr{y})$.
In combination with \eqref{ABd}, this implies that each $a_{ij} D_i\brac{\pr{1 - \eta} \phi}$ belongs to $L^{2}\pr{\Om \setminus B_{r/2}\pr{y}}$.
H\"older's inequality and that $\Om$ is bounded implies that $W \pr{1 - \eta} \phi$ belongs to $L^{2}\pr{\Om \setminus B_{r/2}\pr{y}}$ as well.
Therefore, it follows from \eqref{eqB.53} that
\begin{align}
\lim_{\mu \to \iny} \fint_{B_{\rho_\mu}\pr{y}} \pr{1 - \eta} \phi
&= \int_\Om A \gr {\Ga_y} \cdot \gr \brac{\pr{1 -\eta} \phi} + W \cdot \gr {\Ga_y} \, \pr{1 -\eta} \phi
= {B}[\Ga_{y},\pr{1-\eta} \phi] .
\label{eqB.55}
\end{align}
Upon combining \eqref{eqB.54} and \eqref{eqB.55}, we see that for any $\phi \in C^\iny_c\pr{\Om}$,
\begin{align*}
\phi\pr{y}
&= \lim_{\mu \to \iny} \fint_{B_{\rho_\mu}\pr{y}} \phi
= \lim_{\mu \to \iny} \fint_{B_{\rho_\mu}\pr{y}} \eta \phi
+ \lim_{\mu \to \iny} \fint_{B_{\rho_\mu}\pr{y}} \pr{1 - \eta} \phi \nonumber \\
&= {B}[\Ga_{y},\eta \phi] 
+  {B}[\Ga_{y},\pr{1-\eta} \phi] 
=  {B}[\Ga_{y}, \phi].
\end{align*}
That is, for any $\phi \in C_c^\infty\pr{\Om}$, 
$${B}\brac{\Ga_y, \phi}= \phi\pr{y}$$
and $\Ga$ satisfies property \eqref{eqB.12} in the definition of the Green's function.

As before, for $\ell_0 \in \pb{1, \iny}$ and $m_0 \in \pb{2, \iny}$, we take $f\in L^{\ell_0}\pr{\Om}$, $G \in L^{m_0}\pr{\Om}$ and let $u_1, u_2 \in W^{1,2}_0\pr{\Om}$ be the unique weak solutions to ${\widetilde L}^* u_1 = f$ and ${\widetilde L}^* u_2 = - \di G$.
That is, $u_1$ and $u_2 \in W^{1,2}_0\pr{\Om}$ satisfy \eqref{eqB.29} and \eqref{eqB.30}, respectively, so that with $u := u_1 + u_2$,
$$B^*\brac{u, w} = \int_{\Om} w \, f + Dw \cdot G, \quad \forall w \in W^{1,2}_0\pr{\Om}.$$
Then for a.e. $y\in \Om$, 
\begin{align}
u(y)
=\lim_{\mu\to \infty} \fint_{B_{\rho_\mu}\pr{y}} u
= \lim_{\mu\to \infty} {B}\brac{\Ga^{\rho_\mu}_y, u}
&= \lim_{\mu\to \infty} {B}^*\brac{u, \Ga^{\rho_\mu}_y}
= \lim_{\mu\to \infty} \int_{\Om} \Ga^{\rho_\mu} f + D \Ga^{\rho_\mu} \cdot G
\label{eqB.56}
\end{align}
where we have used \eqref{eqB.33} - \eqref{eqB.35}.

Let $\eta \in C^\iny_c\pr{B_r\pr{y}}$ be as defined in the previous paragraph.
Then $\eta f \in L^{\ell_0}\pr{B_r\pr{y}}$.
Since $\Om$ is bounded, then $f \in L^\ell\pr{\Om}$ for some $\ell \in \pr{1, 2}$ and it follows that $\pr{1 - \eta}f  \in L^\ell\pr{\Om \setminus B_{r/2}\pr{y} }$.
Equation \eqref{eqB.53} in combination with a Sobolev inequality implies that for all $0 < r_0 < r$,
\begin{equation*}
\Ga^{\rho_{\mu}} \rightharpoonup \Ga \quad \text{in } L^{\ell'}\pr{\Om \setminus \Om_{r_0}\pr{y}},
\end{equation*}
where $\ell' \in \pr{2, \iny}$ denotes the H\"older conjugate to $\ell$.
Consequently, using the property above and \eqref{eqB.52} with $\tilde s = \ell_0'$ shows that
\begin{align*}
\lim_{\mu\to \infty} \int_{\Om} \Ga^{\rho_\mu} f
&= \lim_{\mu\to \infty}  \int_{B_r\pr{y}} \Ga^{\rho_\mu} \eta f
+ \lim_{\mu\to \infty}  \int_{\Om \setminus B_{r/2}\pr{y}} \Ga^{\rho_\mu} \pr{1 - \eta} f \\
&= \int_{B_r\pr{y}} \Ga \eta f
+  \int_{\Om \setminus B_{r/2}\pr{y}} \Ga \pr{1 - \eta} f
= \int_{\Om} \Ga \, f.
\end{align*}
Since $m_0 > 2$, then $m_0' \in \pr{1, 2}$ and then according to \eqref{eqB.52} we can pair $\eta G$ with $D \Ga^{\rho_\mu}$ in $B_r\pr{y}$ and take the limit.
As $\Om$ is bounded, then $G \in L^2\pr{\Om}$ so that $\pr{1 - \eta} G \in L^2\pr{\Om \setminus B_{r/2}\pr{y} }$.
With the aid of \eqref{eqB.52} with $s = m_0'$ and \eqref{eqB.53}, we see that
\begin{align*}
\lim_{\mu\to \infty} \int_{\Om} D\Ga^{\rho_\mu} \cdot G
&= \lim_{\mu\to \infty}  \int_{B_r\pr{y}} D\Ga^{\rho_\mu} \cdot \eta G 
+ \lim_{\mu\to \infty}  \int_{\Om \setminus B_{r/2}\pr{y}} D\Ga^{\rho_\mu} \cdot \pr{1 - \eta} G \\
&= \int_{B_r\pr{y}} D\Ga \cdot \eta G 
+  \int_{\Om \setminus B_{r/2}\pr{y}} D\Ga \cdot \pr{1 - \eta} G
= \int_{\Om} D\Ga \cdot G.
\end{align*}
Combining the last two equations with \eqref{eqB.56} gives \eqref{eqB.14}.
Property \eqref{eqB.15} follows as well.

The first part of each of the estimates \eqref{eqB.12}--\eqref{eqB.16} follow almost directly by passage to the limit and recalling that we use the notation $\Ga = \Ga_y = \Ga\pr{\cdot, y}$. 
Indeed, for any $r > 0$ and any $g \in L^\infty\pr{\Om_r\pr{y}}$, \eqref{eqB.49} implies that for any $s \in \brp{1, \iny}$
$$\abs{\int_{\Om} \Ga \, g}
= \lim_{\mu\to \infty} \abs{\int_{\Om} \Ga^{\rho_\mu} g}
\le C_s r^{-\eps+\frac{2}{s}} \norm{g}_{L^{s'}\pr{\Om_r\pr{y}}},$$
where $\eps > 0$ is arbitrarily small and $s'$ is the H\"older conjugate exponent of $s$.  
By duality, we obtain that for every $s \in \brp{1, \iny}$ and $r > 0$,
$$\norm{\Ga\pr{\cdot, y}}_{L^s\pr{\Om_r\pr{y}}} 
\le C_s r^{-\eps+\frac{2}{s}},$$
that is, the first part of \eqref{eqB.17} holds. 
A similar argument using \eqref{eqB.48}, \eqref{eqB.44} and \eqref{eqB.46} yields the first parts of \eqref{eqB.18} and \eqref{eqB.16}, respectively. 
Now, as in the proofs of  \eqref{eqB.45} and \eqref{eqB.47}, the first part of \eqref{eqB.16} gives the first parts of \eqref{eqB.19} and \eqref{eqB.20}.

Passing to the proof of \eqref{eqB.21}, fix $x \ne y$. For a.e. $x \in \Om$, the Lebesgue differentiation theorem implies that
\begin{align*}
\Ga\pr{x} 
&= \lim_{\de \to 0^+} \fint_{\Om_\de\pr{x}} \Ga
= \lim_{\de \to 0^+} \frac{1}{\abs{\Om_\de}} \int \Ga \, \chi_{\Om_\de\pr{x}},
\end{align*}
where $\chi$ denotes an indicator function.
Assuming as we may that $2\de \le \min\set{d_x, \abs{x - y}}$, it follows that $\chi_{\Om_\de\pr{x}} = \chi_{B_\de\pr{x}} \in L^{2^*}\pr{\Om\setminus \Om_{\de}\pr{y}}$ for any $2^* \in \pr{2, \iny}$, where $d_x=\mbox{dist}(x,\partial\Omega)$.  
Therefore, \eqref{eqB.52} implies that
\begin{align*}
\frac{1}{\abs{B_\de}}  \int \Ga \, \chi_{B_\de\pr{x}}
&= \lim_{\mu \to \iny} \frac{1}{\abs{B_\de}} \int \Ga^{\rho_\mu} \, \chi_{B_\de\pr{x}}
= \lim_{\mu \to \iny} \fint_{B_\de\pr{x}} \Ga^{\rho_\mu}.
\end{align*}
If $\rho_\mu \le \frac 1 3 \abs{x - y}$, $\rho_\mu<d_y$,  then \eqref{eqB.40} implies that for a.e. $z \in B_\de\pr{x}$
\begin{align*}
\abs{\Ga^{\rho_\mu}\pr{z}}
\le C \abs{z-y}^{-\eps},
\end{align*}
where $C$ is independent of $\rho_\mu$.
Since $\abs{z - y} >\frac{1}{2} \abs{x - y}$ for every $z \in B_\de\pr{x} \su B_{\abs{x-y}/2}\pr{x}$, then
\begin{align*}
\norm{\Ga^{\rho_\mu}}_{L^\iny\pr{B_\de\pr{x}}} \le C \abs{x - y}^{-\eps}.
\end{align*}
By combining with the observations above, we see that for a.e. $x \in \Om$,
\begin{align*}
\Ga\pr{x, y} 
&= \lim_{\de \to 0^+} \frac{1}{\abs{\Om_\de}} \int \Ga \, \chi_{\Om_\de\pr{x}}
= \lim_{\de \to 0^+}  \lim_{\mu \to \iny} \fint_{B_\de\pr{x}} \Ga^{\rho_\mu}
\le \lim_{\de \to 0^+}  \lim_{\mu \to \iny} C \abs{x - y}^{-\eps}
= C \abs{x - y}^{-\eps},
\end{align*}
which is \eqref{eqB.21}.

Now we have to prove that $\Ga(\cdot, y)= 0$ on $\del \Om$ in the sense that for all $\zeta \in C_c^\iny\pr{\Om}$ satisfying $\zeta\equiv 1$ on $B_r(y)$ for some $r>0$, equation \eqref{eqB.13} holds.
By Mazur's lemma, $W^{1,2}_0\pr{\Om}$ is weakly closed in $W^{1,2}\pr{\Om}$.  
Therefore, since $(1-\zeta)\Ga^{\rho_{\mu}}= \Ga^{\rho_{\mu}}- \zeta \Ga^{\rho_{\mu}}  \in W^{1,2}_0\pr{\Om}$ for all $\rho_\mu > 0$, it suffices for \eqref{eqB.13} to show that
\begin{equation}
\label{eqB.57}
(1-\zeta)\Ga^{\rho_\mu} \rightharpoonup (1-\zeta) \Ga \quad \text{in } W^{1,2}\pr{\Om}.
\end{equation}
Since $(1-\zeta)\equiv 0$ on $B_r(y)$, the result \eqref{eqB.57} follows from 
\eqref{eqB.53}.  
Indeed,
\begin{align*}
\int_{\Om} (1-\zeta) \Ga(\cdot, y)\phi 
&=\int_{\Om} \Ga (\cdot, y)(1-\zeta) \phi 
= \lim_{\mu\to \iny} \int_{\Om} \Ga^{\rho_\mu}(\cdot, y) (1-\zeta) \phi \\
&=\lim_{\mu\to \iny} \int_{\Om} (1-\zeta) \Ga^{\rho_\mu} (\cdot, y) \phi, \quad 
\forall \phi \in L^{2}\pr{\Om}, \quad \text{and} \\
\int_{\Om} D\brac{\pr{1-\zeta}\Ga(\cdot, y)}\cdot \psi 
&= -\int_{\Om} \Ga(\cdot, y) D\zeta \cdot \psi + \int_{\Om} D\Ga(\cdot, y) \cdot (1-\zeta)\psi \\
&=-\lim_{\mu\to \iny} \int_{\Om} \Ga^{\rho_\mu} (\cdot, y) D\zeta \cdot \psi + \lim_{\mu \to \iny} \int_{\Om} D\Ga^{\rho_\mu} (\cdot, y) \cdot (1-\zeta) \psi \\
&=\lim_{\mu \to \iny} \int_{\Om} D\brac{\pr{1-\zeta}\Ga^{\rho_\mu} (\cdot, y)}\cdot \psi, \quad 
\forall \psi \in L^2\pr{\Om}^2,
\end{align*}
so that \eqref{eqB.13} holds.
Since $\Ga(x,y)$ satisfies \eqref{eqB.12} -- \eqref{eqB.15}, it is the unique Green's function associated to $\widetilde{L}$.

Fix $x, \, y \in \Om$ and $0<R\le R_0< |x-y|$.  
Then $\widetilde{L}\Ga\pr{\cdot, y}=0$ on $B_{R_0}\pr{x}$.  
Therefore, by H\"older continuity of solutions described by \eqref{eqB.11} and the pointwise bound \eqref{eqB.21}, whenever $\abs{x-z} < \frac{R}{2}$ we have
\begin{align*}
\abs{\Ga\pr{x,y} - \Ga\pr{z,y}} 
&\le C_{R_0} \pr{\frac{|x-z|}{R}}^\eta C \norm{\Ga(\cdot, y)}_{L^\infty\pr{B_{R}(x)}}
\le C_{R_0} C \pr{\frac{|x-z|}{R}}^\eta R^{-\eps}.
\end{align*}
This is the H\"older continuity of $\Ga(\cdot, y)$ described by \eqref{eqB.22}.

Using the pointwise bound on $\Ga^\rho$ in place of those for $\Ga$, a similar statement holds for $\Ga^\rho$ with $\rho\leq \frac 38 |x-y|$, and it follows that for any compact set $K \Subset \Om\setminus\set{y}$, the sequence $\set{\Ga^{\rho_\mu}\pr{\cdot, y}}_{\mu = 1}^\iny$ is equicontinuous on $K$.
Furthermore, for any such $K \Subset \Om\setminus\set{y}$, there are constants 
$C_K < \iny$ and $\rho_K > 0$ such that for all $\rho < \rho_K$,
\begin{align*}
\norm{\Ga^{\rho}\pr{\cdot, y}}_{L^\infty\pr{K}} \le C_K.
\end{align*}
Passing to a subsequence if necessary, we have that for any such compact $K 
\Subset \Om\setminus\set{y}$,
\begin{equation}
\Ga^{\rho_\mu}(\cdot, y) \to \Ga\pr{\cdot,  y}
\label{eqB.58}
\end{equation}
uniformly on $K$.

We now aim to show 
\begin{equation*}
\Ga\pr{x,y} = \Ga^*\pr{y,x},
\end{equation*}
where $\Ga^*$ is the Green's function associated to $\widetilde{L}^*$.
Let $\widehat \Ga^{\si}=\widehat \Ga^{\si}_{x} = \widehat \Ga^{\si}\pr{x, \cdot}$ denote the averaged function associated to $\widetilde{L}^*$ at the point $x \in \Om$.
That is, we follow the procedure from above that was used to construct $\Ga^\rho_y$, except that we work with the adjoint operator $\widetilde{L}^*$ and consider the function in terms of the variable $y$ centred at the point $x \in \Om$.
The resulting function is $\widehat \Ga^{\si}$.
  
By the same arguments used for $\Ga^\rho$, we obtain a sequence $\set{\si_{\nu}}_{\nu =1}^\iny$, $\si_\nu \to 0$, such that $\widehat \Ga^{\si_\nu}\pr{\cdot, x}=\widehat \Ga^{\si_\nu}_x$ converges to $\Ga^*\pr{\cdot, x}$ uniformly on compact subsets of $\Om \setminus \set{x}$, where $\Ga^*\pr{\cdot, x}$  is a Green's function for $\widetilde{L}^*$ that satisfies the properties analogous to those for $\Ga\pr{\cdot, y}$.  
In particular, $\Ga^*\pr{\cdot, x}$ is H\"older continuous.

By \eqref{eqB.25}, for $\rho_\mu$ and $\sigma_\nu$ sufficiently small,
\begin{equation}
 \fint_{B_{\rho_\mu}\pr{y}} \widehat \Ga^{\sigma_\nu} \pr{\cdot, x} 
 = {B}\brac{ \Ga^{\rho_\mu}_y, \widehat \Ga^{\si_\nu}_x}
 = {B}^*\brac{\widehat \Ga^{\si_\nu}_x, \Ga^{\rho_\mu}_y}
 = \fint_{B_{\si_\nu}\pr{x}} \Ga^{\rho_\mu}(\cdot, y).
 \label{eqB.59}
\end{equation}
Define
$$g_{\mu \nu}
:= \fint_{B_{\rho_\mu}\pr{y}} \widehat \Ga^{\si_\nu}\pr{\cdot, x} 
=  \fint_{B_{\si_\nu}\pr{x}} \Ga^{\rho_\mu}\pr{\cdot, y} .$$
By continuity of $\Ga^{\rho_\mu}\pr{\cdot, y}$, it follows that for any $x \ne y \in \Om$,
$$\lim_{\nu \to \iny} g_{\mu \nu}
= \lim_{\nu \to \iny}  \fint_{B_{\rho_\mu}\pr{y}} \widehat \Ga^{\si_\nu}\pr{\cdot, x} 
=  \Ga^{\rho_\mu}\pr{x, y},$$
so that by \eqref{eqB.58},
$$\lim_{\mu \to \iny}\lim_{\nu \to \iny} g_{\mu \nu}
= \lim_{\mu \to \iny} \Ga^{\rho_\mu}\pr{x, y} 
= \Ga\pr{x, y}.$$
But by weak convergence in $W^{1,s}\pr{B_r\pr{y}}$, i.e., \eqref{eqB.52},
$$\lim_{\nu \to \iny} g_{\mu \nu}
= \lim_{\nu \to \iny} \fint_{B_{\rho_\mu}\pr{y}} \widehat \Ga^{\si_{\nu}}\pr{\cdot, x} 
= \fint_{B_{\rho_\mu}\pr{y}} \Ga^* \pr{\cdot, x},$$
and it follows then by continuity of $\Ga^*\pr{\cdot, x}$ that
$$\lim_{\mu \to \iny}\lim_{\nu \to \iny} g_{\mu \nu}
= \lim_{\mu \to \iny} \fint_{B_{\rho_\mu}\pr{y}} \Ga^* \pr{\cdot, x} 
= \Ga^* \pr{y, x}.$$
Therefore, for all ${x \ne y}$,
\begin{equation}
\Ga\pr{x,y} = \Ga^*\pr{y,x}.
\label{eqB.60}
\end{equation}
Consequently, all the estimates which hold for $\Ga\pr{\cdot, y}$ hold analogously for $\Ga\pr{x, \cdot}$ and the proof is complete.
\end{pf}

\begin{rem}
We have seen that there is a subsequence $\set{\rho_{\mu}}_{\mu = 1}^ \iny$, $\rho_\mu \to 0$, such that $\Ga^{\rho_\mu}\pr{x, y} \to \Ga\pr{x, y}$ for all $x\in \Om \setminus \set{y}$. 
In fact, a stronger result can be proved.  
By \eqref{eqB.59},
\begin{align*}
\Ga^{\rho}\pr{x,y}
= \lim_{\nu \to \iny} \fint_{B_{\si_\nu}\pr{x}} \Ga^\rho\pr{\cdot, y} 
= \lim_{\nu \to \iny} \fint_{B_\rho\pr{y}} \widehat \Ga^{\si_{\nu}}\pr{\cdot, x} 
= \fint_{B_\rho\pr{y}} \Ga^{ \, *}\pr{\cdot, x}. 
\end{align*}
By \eqref{eqB.60}, this gives
$$\Ga^{\rho}\pr{x,y}  = \fint_{B_\rho\pr{y}} \Ga\pr{x, z}dz.$$
By continuity, for all $x \ne y$,
\begin{equation*}
\lim_{\rho \to 0} \Ga^\rho\pr{x,y} = \Ga\pr{x,y}.
\end{equation*}
\end{rem}

\end{appendix}

\def\cprime{$'$}

%

\end{document}